\theoremstyle{definition}
\newtheorem{theorem}{Theorem}[section]
\newtheorem{lemma}[theorem]{Lemma}
\newtheorem*{lemma*}{Lemma}
\newtheorem*{conjecture*}{Conjecture}
\newtheorem*{lemma''*}{``Lemma''}
\newtheorem{claim}[theorem]{Claim}
\newtheorem*{claim*}{Claim}
\newtheorem{corollary}[theorem]{Corollary}
\newtheorem{question}[theorem]{Question}
\newtheorem{conjecture}[theorem]{Conjecture}
\newcommand		\sign		{\operatorname{sign}}
\newcommand		\mc				\mathcal
\newcommand \M {\cal M}
\newcommand \R  {\mathbb R}
\newcommand \supp {\hbox{supp }}
\title{\bf Minimally rigid tensegrity frameworks}
\author[1]{Adam D. W. Clay}
\author[2,3]{Tibor Jordán}
\author[3]{Sára Hanna Tóth}
\affil[1]{\footnotesize Department of Mathematics, Purdue University, West Lafayette, IN 47907, USA}
\affil[2]{\footnotesize HUN-REN--ELTE Egerváry Research Group on Combinatorial Optimization, Pázmány~Péter~sétány~1/C, Budapest, 1117, Hungary}
\affil[3]{\footnotesize Department of Operations Research, ELTE Eötvös Loránd University, Pázmány~Péter sétány~1/C, Budapest, 1117, Hungary}
\affil[ ]{\footnotesize \textit{E-mail addresses:} {\tt adwclay@gmail.com}, {\tt tibor.jordan@ttk.elte.hu}, {\tt toth.sara.hanna@gmail.com}}
\date{October 6, 2024}
\begin{document}

\maketitle

\begin{abstract}
A $d$-dimensional tensegrity framework $(T,p)$ is an edge-labeled geometric graph in $\R^d$, which consists of 
a graph $T=(V,B\cup C\cup S)$ and a map $p:V\to \R^d$. 
The labels determine whether an edge $uv$ of $T$ corresponds to a fixed length bar in
$(T,p)$, or a cable
which cannot increase in length, or a strut which cannot decrease in length.

We consider minimally infinitesimally rigid $d$-dimensional tensegrity frameworks and 
provide tight upper bounds
on the number of its edges, in terms of the number of vertices and the dimension $d$. We obtain stronger upper bounds in the case when there are no bars and
the framework is in generic position. The proofs use methods from convex geometry and matroid theory. A special case of our results confirms a conjecture of Whiteley from 1987.
We also give an affirmative answer to a conjecture concerning the number of edges of a graph whose three-dimensional rigidity matroid is minimally connected.
\end{abstract}

\section{Introduction}

A {\it tensegrity graph} $T=(V, B\cup C \cup S)$ is an edge-labeled graph
on vertex set $V$. 
We assume that the graph is simple, unless stated otherwise.
According to its label, each edge of $T$ is called a 
{\it bar}, a {\it cable}, or a {\it strut}.
%
The bars, cables and struts together are the {\it members} of $T$. 
A $d$-dimensional {\it tensegrity framework} $(T,p)$ is a pair, where $T=(V,B\cup C\cup S)$ is a tensegrity graph and $p:V\to \R^d$ is a map, satisfying
$p(u)\not= p(v)$ for each $uv\in B\cup C\cup S$. It is also called a {\it realization} of $T$ in $\R^d$.
The length of a 
member $uv$ of $T$ in $(T,p)$ is the distance between $p(u)$ and $p(v)$.
In a (deformation of a) tensegrity framework the length of each bar is fixed, the
cables cannot increase in length, and the struts cannot decrease in length. 
If each member of $T$ is a bar, we say that $(T,p)$ is a {\it bar-and-joint framework}.
In this case we often denote the tensegrity graph by $G=(V,E)$.
If there are no bars in $(T,p)$, then we call it a {\it cable-strut framework}.

Roughly speaking, a $d$-dimensional tensegrity framework is said to be rigid in $\R^d$
if there is no non-trivial continuous deformation 
of $(T,p)$ that respects 
the length constraints of the edges.
We shall consider a stronger and more tractable form of rigidity, called
infinitesimal rigidity.
(Precise definitions are given in the next section.)  
Our main target is to obtain tight upper bounds on the number of members of
minimally infinitesimally rigid tensegrity frameworks in $\R^d$, 
in terms of the number of vertices and $d$.
Minimality refers to the property that for each member $e$ of $T$ the framework
$(T-e,p)$ is no longer infinitesimally rigid.

It is well-known that every minimally infinitesimally rigid bar-and-joint framework on vertex set $V$, with $|V|\geq d+2$, has exactly 
\begin{equation}
\label{up}
d|V| - \binom{d+1}{2}
\end{equation}
bars.
In the case of minimally infinitesimally rigid tensegrity frameworks this number is not uniquely determined by $|V|$ and $d$. It is not hard to see that the number of members is at least 
$d|V| - \binom{d+1}{2}+1$, but it can also be as large as 
\begin{equation}
\label{up1}  
2d|V| - 2\binom{d+1}{2}
\end{equation}
See\footnote{In our figures solid lines, dashed lines, and double lines represent
bars, cables, and struts, respectively.} Figure \ref{4v6_c-s}.
Finding a tight upper bound has been an open problem for all $d\geq 1$.
In the first part of the paper we employ methods from convex geometry and show that
(\ref{up1}) is the best possible upper bound for all $d\geq 1$ and $|V|\geq d+2$ (Theorem \ref{th_parallel_bars}). In order to obtain our bounds, and some refinements,
we use and extend a classic result of Steinitz from convex geometry.
We can also characterize the extremal cases if there are no bars.

\begin{figure}[!h]
\begin{center}
\includegraphics[width=4cm]{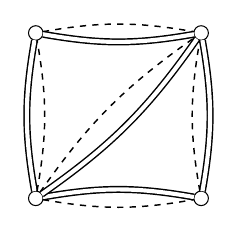}
\caption{\label{4v6_c-s} A minimally rigid cable-strut framework in $\mathbb R^2$ with $4|V|-6$ members.}
\end{center}
\label{1dim}
\end{figure}

In the second part of the paper we consider minimally infinitesimally rigid generic cable-strut
frameworks and show the stronger, tight upper bound
\begin{equation}
\label{up2}  
(d+1)|V| - \binom{d+2}{2}
\end{equation}
for $1\leq d\leq 3$ (Theorem \ref{minrigbound}). 
This result confirms a conjecture of W. Whiteley from 1987 \cite{Whiteley}.
We conjecture that (\ref{up2}) is the
best possible upper bound for all $d\geq 1$ (Conjecture \ref{conj_cs}). We can verify the
bound  $\frac{3}{2}d|V|$, which is roughly half-way between (\ref{up1}) and 
(\ref{up2}), for all $d\geq 1$ (Theorem \ref{thm: edge bounds}).


For every fixed $d$, the existence of a single generic infinitesimally rigid $d$-dimensional realization of a 
graph $G$ as a bar-and-joint framework implies that every generic $d$-dimensional
realization of $G$ is infinitesimally rigid in $\R^d$. 
In the case of tensegrity frameworks the situation is different: infinitesimal rigidity
of a generic framework $(T,p)$ depends on $T$ as well as on $p$.
For example, a triangle of two cables and a strut has infinitesimally rigid as well
as non-rigid generic realizations on the line.
See also Figure \ref{notgen} for a two-dimensional example.
This fact makes some of the combinatorial questions concerning
tensegrities substantially more difficult than the corresponding questions for bar-and-joint
frameworks. For example, for each $d\geq 2$ it is an open problem to characterize the 
tensegrity graphs which can be realized as generic rigid tensegrity frameworks in
${\mathbb R}^d$. Moreover, testing whether {\em every} generic realization of a
tensegrity graph $T$ is rigid 
is NP-hard, even in ${\mathbb R}^1$ \cite{JJK}.
It also follows that our upper bounds on the number of members in a generic
minimally infinitesimally rigid framework
are not purely combinatorial
properties of the underlying tensegrity graph.

In order to deal with this phenomenon, we use methods from matroid theory.
In particular,
we introduce the circuit decompositions of matroids, which generalize ear-decompositions.
We show that every generic infinitesimally
rigid tensegrity framework $(T,p)$ has a ``geometric''  circuit decomposition
and use it to deduce our upper bounds on the number of members.

\begin{figure}[!h]
\begin{center}
\includegraphics[]{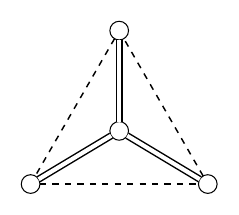}
\includegraphics[]{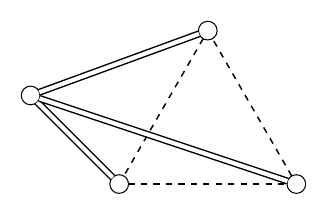}
\caption{\label{notgen} A rigid and a non-rigid generic realization of the same tensegrity graph in the plane.}
\end{center}
\end{figure}

The structure of the paper is as follows. 
In Section \ref{sec:2} we collect the basic definitions and results from
rigidity theory we shall need. 
Section \ref{sec:3} contains the required results of convex geometry, including some
preliminary lemmas as well as extensions of the theorem of Steinitz.
In Section \ref{sec:4} we prove the tight upper bounds on the number of members of
minimally infinitesimally rigid tensegrity frameworks. In Sections \ref{sec:5} and \ref{sec:6}
we introduce circuit decompositions and prove a number of related results concerning
graphs and frameworks, respectively.
Section \ref{sec:7} contains the upper bounds in the case of generic cable-strut frameworks.
By using our proof methods we can also verify a conjecture on minimally
connected rigidity matroids in Section \ref{sec:8}. The paper ends with some concluding
remarks and open problems.

\section{Rigid frameworks and the rigidity matroid}
\label{sec:2}

Let $(T,p)$
 and  $(T,q)$
 be two 
$d$-dimensional realizations of the tensegrity graph $T=(V,B\cup C\cup S)$. 
We say that $(T,p)$
{\it dominates} $(T,q)$  
 if we have 
 $||p(u)-p(v)||=||q(u)-q(v)||$ for all $uv\in B$,
  $||p(u)-p(v)||\geq ||q(u)-q(v)||$ for all $uv\in C$, and
$||p(u)-p(v)||\leq ||q(u)-q(v)||$ for all $uv\in S$. 
In this case we also say that $(T,q)$
{\it satisfies the member constraints} of $(T,p)$.
Here $||.||$
 denotes the Euclidean norm in $\R^d$.
If we have $||p(u)-p(v)|| = ||q(u)-q(v)||$ for all $u,v\in V$,
then we say that 
$p$ and $q$
 are {\it congruent}.

We say that a tensegrity $(T,p)$ 
 is {\it rigid} if there is an $\epsilon > 0$  
 such that for any other realization $(T,q)$ 
 with $||p(v)-q(v)||\leq \epsilon$
 for all $v\in V$
 that satisfies the member constraints of $(T,p)$, 
 $p$ is congruent to $q$. 
It can be shown that a tensegrity framework is not rigid if and only if it is {\it flexible}, which means that 
there is a continuous path $x:[0,1]\to \R^{dn}$ with $x(0)=p$ such that $(T,x(t))$
is a framework that satisfies the member constraints of $(T,p)$,
and $p$ and $x(t)$ are not congruent, for all $0< t\leq 1$, see \cite{RW}.
(Here $n=|V(T)|$ and each map $p:V\to {\mathbb R}^d$ is associated with a point in $\R^{dn}$ in the natural way.)

We next define a different form of rigidity, called infinitesimal rigidity.
An {\it infinitesimal motion} of a tensegrity framework $(T,p)$ is an assignment $\mu:V\to \R^d$ such that for each member 
$uv$ we have 
$(p(u)-p(v))\cdot (\mu(u)-\mu(v)) = 0$, if $uv\in B$, 
$(p(u)-p(v))\cdot (\mu(u)-\mu(v)) \leq 0$, if $uv\in C$, and $(p(u)-p(v))\cdot (\mu(u)-\mu(v)) \geq 0$, if $uv\in S$.
We say that $(T,p)$ is {\it infinitesimally rigid} if every infinitesimal motion
$\mu$ of $(T,p)$ is an infinitesimal isometry of $\R^d$,
that is, $(p(u)-p(v))\cdot (\mu(u)-\mu(v))=0$ holds for all $u,v\in V$.
It is known that infinitesimal rigidity implies rigidity for tensegrity frameworks, see \cite[Theorem 5.7]{RW}.
Infinitesimal rigidity can be characterized by the 
the {\it rigidity matrix} $R(T,p)$ of the framework, which is the coefficient matrix of the above inequalities.  
This matrix is 
a $|B\cup C\cup S|\times d|V|$ matrix, in which in the row indexed by an edge $uv$ we have
$p(u)-p(v)$ (resp. $p(v)-p(u)$) in the $d$ columns corresponding to $u$ (resp. $v$), and the remaining entries
are zeros.
In the special case, when $(G,p)$ is a bar-and-joint framework, it is well-known that infinitesimal rigidity can be characterized by the rank of $R(G,p)$. Let $G=(V,E)$ and suppose that $|V|\geq d$. 
Then $(G,p)$ is infinitesimally rigid in $\R^d$ if and only if the rank of $R(G,p)$ is equal to $d|V|-{d+1\choose 2}$.
It is easy to see that the graphs of infinitesimally rigid frameworks on less than $d$ vertices must be complete.

\begin{figure}[!h]
\begin{center}
\includegraphics[width=5cm]{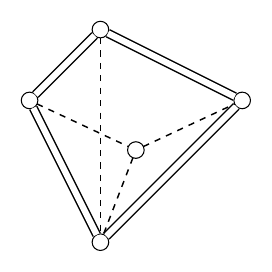}
\includegraphics[width=5cm]{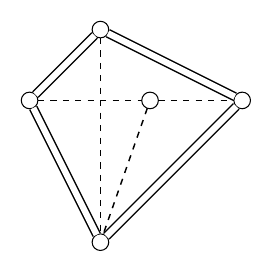}
\caption{\label{rigvsinfrig} Two rigid realizations of the same tensegrity graph in the plane. The first one is infinitesimally rigid, the second one is not.}
\end{center}
\end{figure}

Consider a tensegrity framework $(T,p)$ with $T=(V,B\cup C\cup S)$. Let $\omega: (B\cup C\cup S)\to \R$ be a function which assigns a scalar to each member of $T$. We say that $\omega$ is a {\it dependence}
if it satisfies the following equilibrium condition: 
\begin{equation}
\label{eqs}
\sum_{uv\in B\cup C\cup S} \omega(uv) (p(u)-p(v))=0 \ \hbox{ for all}\ v\in V,
\end{equation}
If, in addition, $\omega$ also satisfies the following {\it sign constraints}, then
we call $\omega$ a {\it stress} of $(T,p)$:
\begin{equation}
\label{scC}
\omega(e)\leq 0, \ \hbox{for all}\ e\in C,
\end{equation}
and
\begin{equation}
\label{scS}
\omega(e)\geq 0, \ \hbox{for all}\ e\in S. 
\end{equation}

\noindent
Note that there are no sign constraints on the bars.
Thus a dependence of a bar-and-joint framework $(G,p)$ is a stress - it is simply a linear
dependence among the rows of $R(G,p)$.
A {\it proper stress}\footnote{There is a different terminology used in the literature, where  the names 
dependence, stress, and proper stress are replaced by 
stress, proper stress, and strict proper stress, respectively.
We follow the terminology of \cite{RW}.} of $(T,p)$ is a stress of $(T,p)$ satisfying
$\omega(e)< 0$ (resp. $\omega(e)> 0$) for all $e\in C$ (resp. $e\in S$).

The following fundamental lemma is due to Roth and Whiteley.
For a tensegrity framework $T=(V,B\cup C\cup S)$ we use $\overline T$ to denote its underlying graph: it is a graph on vertex set $V$ with edge set $E=B\cup C\cup S$.

\begin{theorem} \cite[Theorem 5.2]{RW}
\label{fund}
    Let $(T, p)$ be a tensegrity framework in $\R^d$. Then $(T, p)$ is infinitesimally rigid if and only if $(\overline T, p)$ is infinitesimally rigid and there exists a proper stress of $(T, p)$.
\end{theorem}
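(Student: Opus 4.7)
For the sufficiency, suppose $(\overline T, p)$ is infinitesimally rigid and let $\omega$ be a proper stress. Given an arbitrary infinitesimal motion $\mu$ of $(T,p)$, I would consider
\[
\Sigma \;=\; \sum_{e=uv\in B\cup C\cup S}\omega(e)\bigl(p(u)-p(v)\bigr)\cdot\bigl(\mu(u)-\mu(v)\bigr).
\]
Regrouping the sum by vertex and applying the equilibrium identity~(\ref{eqs}) gives $\Sigma=0$. On the other hand, every term of the defining sum is nonnegative: bars contribute $0$; on a cable both factors are nonpositive by~(\ref{scC}) and the definition of an infinitesimal motion; on a strut both are nonnegative by~(\ref{scS}). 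Hence each term vanishes, and since a \emph{proper} stress has $\omega(e)\ne 0$ on every cable and strut, the equality $(p(u)-p(v))\cdot(\mu(u)-\mu(v))=0$ holds on every member. Thus $\mu$ is an infinitesimal motion of the bar-and-joint framework $(\overline T,p)$, and the assumed infinitesimal rigidity of $(\overline T,p)$ forces $\mu$ to be an infinitesimal isometry.

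For the necessity, assume $(T,p)$ is infinitesimally rigid. Infinitesimal rigidity of $(\overline T,p)$ is immediate: any infinitesimal motion of $(\overline T,p)$ makes every member inner product vanish, so it trivially satisfies the sign conditions on cables and struts, hence is an infinitesimal motion of $(T,p)$ and by assumption an isometry. The substantive task is producing a proper stress, which I would build one member at a time using a theorem of the alternative. Fix a cable $e_0=uv$ and consider the system in $\mu:V\to\R^d$ asking that $R_e\mu=0$ for bars, $R_e\mu\le 0$ for cables, $R_e\mu\ge 0$ for struts, together with the strict inequality $R_{e_0}\mu<0$, where $R_e$ denotes the row of the rigidity matrix $R(T,p)$ indexed by $e$. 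A solution $\mu$ would be an infinitesimal motion of $(T,p)$ that is not an isometry, contradicting rigidity. Motzkin's transposition theorem then supplies nonnegative multipliers on the inequality rows—with the multiplier at $e_0$ strictly positive—and arbitrary multipliers on the bar rows, whose signed linear combination of the rows $R_e$ vanishes. Reinterpreting these multipliers with the sign conventions of~(\ref{scC}) and~(\ref{scS}) produces a stress $\omega^{(e_0)}$ with $\omega^{(e_0)}(e_0)<0$; the symmetric argument applied to each strut $e_1$ yields a stress $\omega^{(e_1)}$ with $\omega^{(e_1)}(e_1)>0$.

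Finally I would set $\omega^{\ast}:=\sum_{e_0\in C}\omega^{(e_0)}+\sum_{e_1\in S}\omega^{(e_1)}$. For any cable $e$, the term $\omega^{(e)}(e)$ is strictly negative while every other contribution to $\omega^{\ast}(e)$ is nonpositive by~(\ref{scC}), so $\omega^{\ast}(e)<0$; symmetrically $\omega^{\ast}(e)>0$ on every strut, which gives the required proper stress. The main obstacle of the whole proof is the construction of the proper stress: once one has a single strict-inequality-at-$e_0$ certificate for every member, the averaging argument is just bookkeeping, but selecting the correct alternative theorem—one that simultaneously handles equalities, non-strict inequalities in both directions, and a single strict inequality—is the delicate step, and one must track sign conventions carefully when converting the Motzkin multipliers into a stress.
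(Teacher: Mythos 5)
Your argument is correct. Note, however, that the paper does not prove this statement at all: it is quoted verbatim from Roth and Whiteley \cite{RW} (their Theorem 5.2), so there is no in-paper proof to compare against. What you have written is essentially the standard argument from that source: the sufficiency direction via the pairing $\sum_e \omega(e)\,(p(u)-p(v))\cdot(\mu(u)-\mu(v))$, which vanishes by the equilibrium condition (\ref{eqs}) while having all terms of one sign, and the necessity direction via a theorem of the alternative applied member by member, followed by summing the resulting stresses to make every cable coefficient strictly negative and every strut coefficient strictly positive. The two delicate points --- that the single strict inequality forces a strictly positive Motzkin multiplier at $e_0$, and that the sign flip on struts (writing $R_e\mu\ge 0$ as $-R_e\mu\le 0$) is undone consistently when converting multipliers into a stress satisfying (\ref{scC}) and (\ref{scS}) --- are both handled correctly, so the proof is complete.
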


For a set $A$ of members of $T$ we use $R_A(T,p)$ to denote the
submatrix of $R(T,p)$ induced by the rows of $A$.

The {\it support} $\supp \omega$ of a stress $\omega$ of $(T,p)$ is the set $\{e\in B\cup C\cup S: \omega(e)\not= 0\}$ of members with non-zero stress.
We shall need one more lemma from \cite{RW}.

\begin{lemma} \cite[Lemma 5.5]{RW}
\label{own}
Let $(T,p)$ be a  tensegrity framework in $\R^d$ with
$T = (V, B\cup C \cup S)$, 
and let $e \in B\cup C \cup S$. If there exists a stress of $(T, p)$ with $e$ in its support, then there exists a stress $\omega$ of $(T, p)$ with $e$ in its support and such that {\em rank}$\ R_A(T,p) = |A|$ for every $A \subsetneq \supp \omega$.
    \label{lmm: minimally dependent stress support}
\end{lemma}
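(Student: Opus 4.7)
My plan is a standard support-minimization argument. I would take $\omega$ to be a stress of $(T,p)$ with $e \in \supp \omega$ whose support has minimum cardinality among such stresses; this minimum is attained since by hypothesis the family of stresses with $e$ in the support is nonempty and supports are subsets of a finite set. I claim that this $\omega$ already satisfies the conclusion, and I would argue by contradiction. If some $A \subsetneq \supp \omega$ has $\operatorname{rank} R_A(T,p) < |A|$, then there is a nonzero equilibrium vector $\omega'$ supported inside $A$, i.e.\ a vector satisfying (\ref{eqs}) but not necessarily the sign constraints (\ref{scC})--(\ref{scS}). My aim would be to combine $\omega$ and $\omega'$ linearly to produce a genuine stress with $e$ in its support but strictly smaller support, contradicting the choice of $\omega$.

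The main subtlety is that $\omega'(e)$ may be nonzero, so a direct perturbation $\omega+\lambda\omega'$ threatens to kill the coefficient at $e$. I would handle this by first passing to an auxiliary equilibrium vector $\omega''$ that vanishes at $e$: set $\omega'' := \omega'$ when $\omega'(e)=0$, and $\omega'' := \omega - \frac{\omega(e)}{\omega'(e)}\,\omega'$ when $\omega'(e)\neq 0$. A short check shows $\omega'' \neq 0$ (if it were zero in the second case, $\omega$ would be a scalar multiple of $\omega'$, contradicting $\supp \omega' \subseteq A \subsetneq \supp \omega$), $\omega''(e)=0$, and $\supp \omega'' \subseteq \supp \omega \setminus \{e\}$.

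Next I would vary $\lambda$ in $\omega_\lambda := \omega + \lambda\omega''$. By construction $\omega_\lambda(e) = \omega(e) \neq 0$, so $e$ stays in the support for every $\lambda$. Because $\omega$ has strict signs on its support and $\supp \omega'' \subseteq \supp \omega$, a neighborhood of $\lambda=0$ gives stresses, so the set of feasible $\lambda$ is an interval with $0$ in its interior. I would then select $\lambda$ so that some coordinate of $\omega''$ drops out: at a finite boundary of this interval some cable or strut in $\supp \omega''$ automatically hits $0$, and if both boundaries are infinite the sign conditions force $\supp \omega'' \subseteq B$, in which case any bar $b \in \supp \omega''$ can be zeroed out by $\lambda = -\omega(b)/\omega''(b)$ without violating any inequality. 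In every case $\omega_\lambda$ is a stress with $e \in \supp \omega_\lambda$ and $|\supp \omega_\lambda| < |\supp \omega|$, contradicting the minimality of $\omega$. The only real obstacle is the $\omega'(e)\neq 0$ case, resolved cleanly by the substitution above; everything else is a routine sign-tracking perturbation.
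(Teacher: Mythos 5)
The paper does not prove this lemma; it is quoted verbatim from Roth and Whiteley \cite[Lemma 5.5]{RW}, and your support-minimization argument is essentially the standard proof of that result. Your write-up is correct and complete: the reduction to an auxiliary dependence $\omega''$ with $\omega''(e)=0$ and $\supp\omega''\subseteq\supp\omega\setminus\{e\}$, the observation that the feasible set of $\lambda$ for $\omega+\lambda\omega''$ is an interval with $0$ in its interior because the sign constraints are strict on $\supp\omega$, and the two exit cases (a cable or strut hitting zero at a finite endpoint, or $\supp\omega''\subseteq B$ allowing a bar to be zeroed out freely) together yield the desired contradiction with minimality of $|\supp\omega|$.
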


In $\R^1$ the rigidity matrix of $(T,p)$ can be obtained from the incidence matrix of $\overline T$ by multiplying each row by a non-zero real number.
Hence the edges in the support of the proper stress $\omega$ given by
Lemma \ref{own} form a cycle in $T$.

An infinitesimally rigid tensegrity framework $(T,p)$ in $\R^d$ is called {\it minimally infinitesimally rigid} in $\R^d$ if $(T-e,p)$ is not 
infinitesimally rigid in $\R^d$ for every member $e$ of $T$.
We say that a tensegrity framework $(T,p)$ is {\it minimally properly stressed}
if $(T,p)$ has a proper stress, but $(T-e,p)$ has no proper stress for all members $e$ of $T$.

By applying Theorem \ref{fund} we obtain:

\begin{lemma}
\label{minps}
Let  $(T,p)$ be an infinitesimally rigid tensegrity framework in ${\R}^d$.
Then $(T,p)$ is minimally infinitesimally rigid if and only if
$(T,p)$ is minimally properly stressed.
\end{lemma}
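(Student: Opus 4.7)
Both directions follow from Theorem~\ref{fund}.

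For $(\Leftarrow)$: if $(T,p)$ is minimally properly stressed, then $(T-e,p)$ has no proper stress for every member $e$. By Theorem~\ref{fund}, this already rules out the infinitesimal rigidity of $(T-e,p)$, so $(T,p)$ is minimally infinitesimally rigid.

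For $(\Rightarrow)$: if $(T,p)$ is minimally infinitesimally rigid, Theorem~\ref{fund} provides a proper stress $\omega_0$ of $(T,p)$, giving the first condition in the definition of ``minimally properly stressed.'' I would then argue by contradiction that no $(T-e,p)$ has a proper stress. Suppose $(T-e,p)$ admits a proper stress $\omega$ for some member $e$. Minimality implies $(T-e,p)$ is not infinitesimally rigid; combined with the existence of $\omega$, Theorem~\ref{fund} applied to $(T-e,p)$ forces the underlying bar-joint framework $(\overline{T-e},p)$ to fail infinitesimal rigidity. Since $(\overline T,p)$ itself has full rigidity rank $d|V|-\binom{d+1}{2}$, this means $\operatorname{rank} R(\overline T-e,p)<\operatorname{rank} R(\overline T,p)$, so the row of $R(\overline T,p)$ indexed by $e$ is linearly independent of the remaining rows. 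Consequently every stress of $(T,p)$ must vanish at $e$; applied to $\omega_0$, this gives $\omega_0(e)=0$, contradicting the strict sign condition $\omega_0(e)<0$ when $e\in C$ or $\omega_0(e)>0$ when $e\in S$.

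The central step is the rank comparison that identifies ``$(\overline T-e,p)$ is not infinitesimally rigid'' with ``the row of $e$ is independent of the remaining rows in $R(\overline T,p)$.'' This is a direct linear-algebraic consequence of Theorem~\ref{fund} and the bar-and-joint rank formula. The main subtlety I anticipate is the case $e\in B$, in which $\omega_0(e)=0$ does not violate the definition of a proper stress and the immediate contradiction above does not close. In that case one would need an additional argument: for instance, using that the zero extension of $\omega$ to $(T,p)$ is itself a stress of $(T,p)$, and combining it with $\omega_0$ via Lemma~\ref{own} to extract a stress with $e$ in its support, thereby restoring the contradiction.
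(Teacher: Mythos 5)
Your ``if'' direction and the cable/strut case of your ``only if'' direction are correct and follow essentially the same route as the paper: both reduce to Theorem~\ref{fund} together with the observation that a proper stress of $(T,p)$ is nonzero on every cable and strut, so deleting such a member cannot lower the rank of $R(\overline T,p)$. You run this contrapositively (from ``$(\overline{T-e},p)$ is not infinitesimally rigid'' to ``every stress of $(T,p)$ vanishes at $e$''), while the paper runs it directly, but the content is identical.

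The gap is exactly where you suspect it: $e\in B$. Your proposed repair via Lemma~\ref{own} cannot work, because that lemma takes as \emph{hypothesis} the existence of a stress with $e$ in its support; it does not produce one. And when a bar $e$ is a coloop of the row matroid of $R(\overline T,p)$, no such stress exists: every dependence vanishes at $e$, so the restriction of a proper stress of $(T,p)$ to $T-e$ is a proper stress of $(T-e,p)$, while $(T-e,p)$ fails to be infinitesimally rigid. Hence for such $e$ the equivalence itself breaks down, and no patch can close the case --- consider an isostatic all-bar framework, where the zero stress is proper, or the extremal examples with bars described after Theorem~\ref{th_parallel_bars}: these are minimally infinitesimally rigid but not minimally properly stressed. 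The paper's own proof of the ``only if'' direction has the same blind spot: the sentence ``since $(\overline T,p)$ is infinitesimally rigid and has a proper stress, $(\overline{T-e},p)$ is infinitesimally rigid'' is justified only for $e\in C\cup S$. In the body of the paper the lemma is applied only to cable-strut frameworks, where your argument is already complete; so the honest conclusion is not that you are missing an idea, but that the statement needs the hypothesis $B=\emptyset$ (or that every bar lies in the support of some stress), under which your proof coincides with the paper's.
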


\begin{proof}
The ``if'' direction is immediate from Theorem \ref{fund}. To see the only if direction suppose that $(T,p)$ is minimally infinitesimally rigid, but
$(T-e,p)$ is properly stressed for some member $e$ of $T$. Since $(\overline T,p)$  is infinitesimally rigid and
has a proper stress, $(\overline{T-e},p)$ is infinitesimally rigid. 
Thus $(T-e,p)$ is also infinitesimally rigid by Theorem \ref{fund}.
\end{proof}


We say that $(T,p)$ is {\it generic}\footnote{In \cite{RW} the authors
use the term general position to mean generic. It should also be noted that in rigidity theory the term generic sometimes means that the set of coordinates of the points are algebraically independent over the rationals. In some of our statements we use the above, substantially milder assumption.}
 if 
 $$rank R_A(T,p) = \max \{ rank R_A(T,q) : (T,q)\ \hbox{is a}\ d\hbox{-dimensional realization of}\ T\}$$
 for every nonempty $A\subseteq B\cup C\cup S$.
It is known that a generic tensegrity framework is 
infinitesimally rigid if and only if it is rigid, see \cite[Theorem 5.8]{RW}.

For more details on rigid bar-and-joint and tensegrity frameworks see \cite{CN,SW}.

\subsection{The rigidity matrix and the rigidity matroid}

The rigidity matroid of a graph $G$ is a matroid defined on the edge set
of $G$ which reflects the rigidity properties of all generic realizations of
$G$. 
Let $(G,p)$ be a (bar-and-joint) realization of a graph $G=(V,E)$ in ${\mathbb R}^d$.
The rigidity matrix $R(G,p)$ defines
the {\it rigidity matroid}  of $(G,p)$ on the ground set $E$.
In this matroid a set $A\subseteq E$ is independent if and only if
the rows of $R_A(G,p)$ are linearly independent. It is not hard to see that any pair 
$(G,p)$ and $(G,q)$ of generic realizations of $G$ have the same rigidity matroid.
We call this the $d$-dimensional {\it rigidity matroid}
${\cal R}_d(G)=(E,r_d)$ of the graph $G$.
Thus if $(G,p)$ is generic, then the
rank of $R_A(G,p)$ is equal to $r_d(A)$ for all $A\subseteq E$.
We denote the rank of ${\cal R}_d(G)$ by $r_d(G)$.
We say that a graph $G$ is {\it rigid} in ${\mathbb R}^d$ if either $G$ has less than $d$ vertices and $G$ is complete, or $G$ has at least $d$ vertices and 
$r_d(G) = d|V| - \binom{d+1}{2}$ holds. 

A graph $G=(V,E)$ is {\it ${\cal R}_d$-independent} if $r_d(G)=|E|$ and it is an 
{\it ${\cal R}_d$-circuit}  if it is not ${\cal R}_d$-independent, but every proper 
subgraph $G'$ of $G$ is ${\cal R}_d$-independent.

Let ${\cal M}$ be a matroid on ground set $E$ with rank function $r$. 
We can define a relation on the pairs of elements of $E$ by
saying that $e,f\in E$ are
equivalent if $e=f$ or there is a circuit $C$ of ${\cal M}$
with $\{e,f\}\subseteq C$.
This defines an equivalence relation. The equivalence classes are 
the {\it connected components} of ${\cal M}$.
The matroid is said to be {\it connected} if there is only one equivalence class.
The graph is said to be {\it ${\cal R}_d$-connected} if ${\cal R}_d(G)$ is connected.
For a general introduction to matroid theory see \cite{Oxley}.

\section{A theorem of Steinitz and its refinements}
\label{sec:3}

The proofs of our upper bounds on the number of members of a minimally infinitesimally rigid tensegrity framework
employ (extensions of) a classic result of convex geometry due to Steinitz~\cite{Steinitz} (Theorem \ref{steinitz} below), see also~\cite[Theorem 4.22.]{Soltan}.
In this section we state this result and prove some extensions and refinements.
We start with 
some basic definitions and lemmas.

\subsection{Preliminary lemmas}

Let $X =\{x_1, \ldots , x_m\} \subset \mathbb{R}^n$ be a finite set of points. 
The \textit{(linear) span} of $X$ is the subspace $\text{span}(X)=\{ \sum_{i=1}^m \lambda_i x_i : \lambda_1, \ldots, \lambda_m \in \R\}$.
The \textit{convex hull} of $X$ is the set $\text{conv}(X)=\{ \sum_{i=1}^m \lambda_i x_i : \sum_{i=1}^m \lambda_i = 1,~ 0 \leq \lambda_1, \ldots, \lambda_m \in \R\}$. 
The polytope conv($X$) is \textit{$k$-dimensional}, if there exists a $k$-dimensional affine subspace in $\R ^n$ containing conv($X$) and there is no $(k-1)$-dimensional affine subspace containing conv($X$).
A point $x$ is in the \textit{relative interior} of conv($X$) if conv($X$) is $k$-dimensional and there exists a $k$-dimensional ball centered at $x$ contained in conv($X$).
For completeness we include the proofs of the next two preliminary lemmas.

\begin{lemma} \label{lem:relint}
    Let $X \subset \mathbb{R}^n$ be a finite set of points.
    A point $x\in \mathbb{R}^n$ is in the relative interior of conv$(X)$ if and only if there is a strictly positive convex combination of the elements of $X$ resulting in $x$.
\end{lemma}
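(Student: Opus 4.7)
The plan is to prove the two directions separately, both exploiting the affine structure of $\mathrm{conv}(X)$. Let $k = \dim \mathrm{conv}(X)$, let $A$ be the affine hull of $X$, and fix an affine basis $y_0, \ldots, y_k \in X$, which exists because $\mathrm{conv}(X)$ is $k$-dimensional; write $y_j = x_{i_j}$. Every $z \in A$ has unique barycentric coordinates $\alpha_0(z), \ldots, \alpha_k(z)$ with respect to this basis, summing to $1$ and depending continuously (indeed affinely) on $z$.

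For the ``if'' direction, assume $x = \sum_i \lambda_i x_i$ with all $\lambda_i > 0$. For $z \in A$ close to $x$, I would adjust only the coefficients at the basis points: use the representation
\[
z = \sum_{i \notin \{i_0,\ldots,i_k\}} \lambda_i x_i + \sum_{j=0}^{k} \bigl(\lambda_{i_j} + \alpha_j(z) - \alpha_j(x)\bigr)\, x_{i_j}.
\]
The coefficients still sum to $1$ because $\sum_j (\alpha_j(z) - \alpha_j(x)) = 0$, and continuity of the $\alpha_j$ guarantees that for $\|z - x\|$ small enough each coefficient remains strictly positive (using $\lambda_{i_j} > 0$). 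Hence a whole $k$-dimensional ball around $x$ in $A$ lies in $\mathrm{conv}(X)$, so $x$ is in the relative interior.

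For the ``only if'' direction, take the centroid $c = \frac{1}{m}\sum_i x_i$, which is manifestly a strictly positive convex combination of $X$. If $x$ lies in the relative interior, then the point $x' = x + \varepsilon (x - c)$ still belongs to $\mathrm{conv}(X)$ for all sufficiently small $\varepsilon > 0$, so $x' = \sum_i \nu_i x_i$ with $\nu_i \geq 0$ and $\sum_i \nu_i = 1$. Rewriting $x = \frac{1}{1+\varepsilon} x' + \frac{\varepsilon}{1+\varepsilon} c$ gives
\[
x = \sum_{i=1}^m \frac{\nu_i + \varepsilon/m}{1+\varepsilon}\, x_i,
\]
a strictly positive convex combination as desired.

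The one genuine obstacle is non-uniqueness: when $X$ is affinely dependent, there is no canonical way to perturb the $\lambda_i$, so a naive coordinate-wise perturbation need not sweep out a full neighborhood of $x$ within $A$. Anchoring all perturbations to a fixed affine basis via the continuous barycentric coordinates $\alpha_j$ bypasses this in the forward direction, while in the reverse direction the centroid serves as a ``positivity donor'' that upgrades the nonnegative coefficients of $x'$ to strictly positive ones for $x$.
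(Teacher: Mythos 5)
Your proof is correct. The direction ``relative interior $\Rightarrow$ strictly positive convex combination'' is essentially the paper's argument: the paper translates $x$ to the origin and uses the point $-\varepsilon\sum_i x_i$, which is exactly your $x+\varepsilon(x-c)$ up to rescaling, and then mixes back with the centroid to make all coefficients positive. The other direction is where you genuinely diverge. The paper shows only that for each $h\in\operatorname{span}(X)$ some positive multiple $\varepsilon h$ lies in $\operatorname{conv}(X)$ (absorbing the perturbation into the strictly positive coefficients), and implicitly relies on the standard fact that such a directional statement for a convex set yields a full relative neighborhood --- a step it does not spell out. You instead fix an affine basis $y_0,\dots,y_k\in X$ and perturb only the coefficients at the basis points by $\alpha_j(z)-\alpha_j(x)$, which by continuity of barycentric coordinates produces an explicit representation of every $z$ near $x$ in $A$ with positive coefficients, hence directly exhibits the $k$-dimensional ball demanded by the paper's definition of relative interior. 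Your version is slightly longer to set up (one must note that $X$ contains an affine basis of its affine hull, which follows from $\operatorname{conv}(X)$ being $k$-dimensional) but closes the small gap the paper leaves open, and avoids the translation-to-the-origin bookkeeping at the end of the paper's proof.
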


\begin{proof}
    Let $X=\{x_1, \ldots, x_m\}$, and suppose that conv($X$) is $k$-dimensional. First, assume that $x$ is the origin.
    If the origin is in the relative interior of conv($X$) then 
    $-\varepsilon \cdot \sum_{i=1}^m x_i$ is in $\text{conv}(X)$ for a sufficiently small positive $\varepsilon$, thus
    $$
    - \varepsilon \sum_{i=1}^m x_i = \sum_{i=1}^m \lambda_i x_i,
    $$
    where $\lambda_1, \ldots, \lambda_m \geq 0$ and $\sum_{i=1}^m \lambda_i =1$. Then
    $$
    0 = \sum_{i=1}^m \frac{\lambda_i + \varepsilon}{1+m \cdot \varepsilon} x_i
    $$
    is a strictly positive convex combination of the elements of $X$ resulting in zero.

    Conversely, let $\sum_{i=1}^m \alpha_i x_i =0$ be a strictly positive convex combination of the elements of $X$ resulting in the origin. It is enough to show that for any $h \in \text{span}(X)$ there exists a positive $\varepsilon$ such that $\varepsilon h$ is in conv($X$). Let $h \in \text{span}(X)$ and $h= \sum_{i=1}^m \lambda_i x_i$. Denote $\lambda = \sum_{i=1}^m \lambda_i$. 
    If $\lambda \neq 0$, then there exists $\mu \in \R$ with the same sign as $\lambda$ and $|\mu|$ being sufficiently small such that
    $$
    \mu \sum_{i=1}^m \frac{\lambda_i}{\lambda} x_i + (1-\mu) \sum_{i=1}^m \alpha_i x_i = \frac{\mu}{\lambda} h
    $$
    is a convex combination of the elements of $X$ resulting in $\varepsilon h$, where $\varepsilon=\frac{\mu}{\lambda} >0$.  
    If $\lambda =0$ then for a sufficiently small positive $\varepsilon$,
    $$
    \varepsilon \sum_{i=1}^m \lambda_i x_i + \sum_{i=1}^m \alpha_i x_i = \varepsilon h
    $$
    is a convex combination of the elements of $X$ resulting in $\varepsilon h$.

    Finally, observe that if $x$ is not the origin then $x$ is in the relative interior of conv($X$) if and only if the origin is in the relative interior of conv($X'$), where $X'=\{x_1-x, \ldots, x_m-x\}$. Also, there is a strictly positive convex combination of the elements of $X$ resulting in $x$ if and only if there is a strictly positive convex combination of the elements of $X'$ resulting in the origin, since if $\sum_{i=1}^m \alpha_i =1$ then
    $$
    0=\sum_{i=1}^m \alpha_i (x_i-x)
    $$
    is equivalent to
    $$
    x=\sum_{i=1}^m \alpha_i (x_i-x) +x = \sum_{i=1}^m \alpha_i x_i - x \sum_{i=1}^m \alpha_i +x = \sum_{i=1}^m \alpha_i x_i.
    $$
\end{proof}

\begin{lemma} \label{conv_lemma}
    Let $X \subset \mathbb{R}^n$ be a finite set of points. If there exists a strictly positive combination of the elements of $X$ resulting in zero then
    \begin{itemize}
        \item the origin is in the relative interior of conv($X$) and
        \item the dimension of the subspace span($X$) is equal to the dimension of the polytope conv($X$).        
    \end{itemize}
\end{lemma}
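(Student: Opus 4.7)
The plan is to derive both conclusions from the previous lemma (Lemma \ref{lem:relint}) by a short normalization argument and a standard observation relating the affine hull to the linear span.

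First, I would handle the hypothesis. Suppose $\sum_{i=1}^m \mu_i x_i = 0$ with $\mu_i > 0$ for all $i$. Setting $\mu = \sum_{i=1}^m \mu_i > 0$ and $\alpha_i = \mu_i/\mu$, the coefficients $\alpha_i$ are strictly positive and satisfy $\sum_i \alpha_i = 1$, so $\sum_i \alpha_i x_i = 0$ is a strictly positive convex combination of the elements of $X$ resulting in the origin. Lemma \ref{lem:relint} (applied with $x = 0$) then immediately yields that the origin lies in the relative interior of $\text{conv}(X)$, giving the first bullet.

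For the second bullet I would argue that $\text{aff}(X) = \text{span}(X)$ under the present hypothesis. Indeed, the affine hull of $X$ is always contained in $\text{span}(X)$. Conversely, since the origin lies in $\text{conv}(X) \subseteq \text{aff}(X)$, the affine subspace $\text{aff}(X)$ passes through the origin and is therefore a linear subspace; as it contains every $x_i$, it contains $\text{span}(X)$. So the two subspaces coincide, and hence have the same dimension. Since the dimension of the polytope $\text{conv}(X)$ is by definition the dimension of $\text{aff}(X)$, we get
\[
\dim \text{span}(X) \;=\; \dim \text{aff}(X) \;=\; \dim \text{conv}(X),
\]
which is the second bullet.

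There is no real obstacle here: the only thing to be careful about is that the hypothesis is worded as a \emph{strictly positive} (not necessarily convex) combination, which is why the initial rescaling step is needed before invoking Lemma \ref{lem:relint}. Everything else is a direct consequence of that lemma plus the elementary fact that an affine subspace containing the origin is a linear subspace.
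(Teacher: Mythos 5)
Your proof is correct and follows essentially the same route as the paper: the first bullet is obtained by normalizing the positive combination and invoking Lemma \ref{lem:relint}, and the second bullet rests on the same key observation that an affine subspace containing the origin is a linear subspace and hence contains $\mathrm{span}(X)$ (the paper phrases this as a contradiction with a hypothetical $(k-1)$-dimensional affine subspace, while you phrase it directly via the affine hull, but the content is identical).
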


\begin{proof}
    Let $X=\{x_1, \ldots, x_m\}$ and $\sum_{i=1}^m \lambda_i x_i$ be a strictly positive combination of the elements of $X$ resulting in zero. Denote $\lambda = \sum_{i=1}^m \lambda_i$. Then $\sum_{i=1}^m \frac{\lambda_i}{\lambda} x_i$ is a strictly positive convex combination resulting in zero, thus the origin is in the relative interior of conv($X$) by Lemma~\ref{lem:relint}.

    Assume that span($X$) is $k$-dimensional.
    Since conv($X$) is contained in span($X$), conv($X$) is at most $k$-dimensional.
    Assume that conv($X$) is contained in a $(k-1)$-dimensional affine subspace $H \subset \text{span}(X)$. The origin is in conv($X$), thus it is in $H$. Therefore, $H$ is a subspace. Since $H$ contains all elements of $X$, it contains span($X$), contradicting the fact that $H$ is $(k-1)$-dimensional. 
    \end{proof}


The following classic result is due to Steinitz (see also
\cite{Soltan}).

\begin{theorem}[Steinitz \cite{Steinitz}] \label{steinitz}
Let $X \subset \mathbb{R}^n$ be a finite set of points and let $x\in \mathbb{R}^n$. Suppose that conv$(X)$ is $k$-dimensional and $x$ is in the relative interior of conv$(X)$. Then\\
(i) there is a subset $Y \subseteq X$ of at most $2k$ points such that conv($Y$) is $k$-dimensional and $x$ is in the relative interior of conv$(Y)$.\\
(ii) Furthermore, if $Y$ is a minimal subset of $X$ such that conv($Y$) is $k$-dimensional and $x$ is in the relative interior of conv($Y$), then either $|Y| \leq 2k - 1$, or $Y$ consists of $2k$ points
$\{y_1,y_2,\dots,y_{2k}\}$ such that 
the triple $y_{2i-1},x,y_{2i}$ is collinear for all 
$1\leq i\leq k$.
\end{theorem}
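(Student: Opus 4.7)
The plan is to prove both parts simultaneously by induction on $k$, after translating so that $x = 0$. By Lemma~\ref{lem:relint} there is a strictly positive convex combination $\sum \alpha_i x_i = 0$, and Lemma~\ref{conv_lemma} gives $\dim\operatorname{span}(X) = k$. The base case $k = 1$ is immediate: a minimal $Y$ must contain exactly one strictly positive and one strictly negative point, and these are collinear with $0$. For $k \geq 2$, let $Y = \{y_1,\ldots,y_m\} \subseteq X$ be minimal with $\operatorname{conv}(Y)$ of dimension $k$ and $0$ in its relative interior, and fix a positive dependence $\sum \alpha_i y_i = 0$ with $\alpha_i > 0$. Removing a single $y_j$ does not reduce $\operatorname{span}(Y)$, since the dependence expresses $y_j$ in terms of the other $y_i$; hence minimality forces that for each $j$ no dependence $\mu$ of $Y$ satisfies $\mu_j = 0$ with $\mu_i > 0$ for $i \neq j$. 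A Gordan-type alternative then yields a linear functional $v^{(j)}$ on $\R^n$ with $v^{(j)} \cdot y_j < 0$ and $v^{(j)} \cdot y_i \geq 0$ for all $i \neq j$; set $P_j = \{i \neq j : v^{(j)} \cdot y_i > 0\}$ and $Z_j = \{i \neq j : v^{(j)} \cdot y_i = 0\}$. Dotting the dependence with $v^{(j)}$ shows $P_j \neq \emptyset$.

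Next I would carry out the inductive step for (i) by projecting onto the hyperplane $H_j := \ker v^{(j)}$ along $y_j$: write $y_i = p_i + q_i y_j$ with $q_i < 0$ for $i \in P_j$, and $p_i = y_i$ for $i \in Z_j$. Projecting the positive dependence yields $\sum_{i \in Z_j} \alpha_i y_i + \sum_{i \in P_j} \alpha_i p_i = 0$, so $S := \{y_i : i \in Z_j\} \cup \{p_i : i \in P_j\}$ has $0$ in its relative interior and spans the $(k-1)$-dimensional space $H_j$. By the inductive hypothesis for (i), $S$ contains a minimal tight subset $S'$ of size at most $2(k-1)$. In Case A, when $S'$ includes at least one $p_i$ with $i \in P_j$, lifting back to $\{y_j\}$ together with the preimages in $Y$ of the elements of $S'$ produces a tight subset of $Y$ of size at most $2k - 1$, since the $y_j$-contributions generated by the $q_i y_j$ terms are balanced by the appropriately chosen coefficient of $y_j$. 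In Case B, when $S' \subseteq \{y_i : i \in Z_j\}$, pick any $i^* \in P_j$ and form $\{y_j, y_{i^*}\} \cup S'$, of size at most $2k$: the $y_j$-component of $y_{i^*}$ is balanced by a positive multiple of $y_j$, while the residual $H_j$-component $\beta_{i^*} p_{i^*}$ is absorbed by adding a sufficiently large multiple of the strictly positive dependence within $S'$, yielding a strictly positive combination summing to $0$. In either case, if $|Y| \geq 2k + 1$ then a proper tight subset of $Y$ has been produced, contradicting minimality; this proves (i).

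For (ii), suppose $|Y| = 2k$. Case A would still produce a tight subset of size at most $2k - 1 < |Y|$, contradicting minimality, so for every $j$ only Case B applies and must saturate: $Z_j$ is itself a minimal tight subset of $H_j$ of size exactly $2(k-1)$, and $|P_j| = 1$. The inductive hypothesis for (ii) in dimension $k-1$ then gives that $Z_j$ consists of $k-1$ collinear pairs through $0$ in $H_j$. It remains to show that the unique $i^* \in P_j$ satisfies $p_{i^*} = 0$, which gives the collinearity of $y_j$, $0$, and $y_{i^*}$. For each pair $\{j',j''\} \subseteq Z_j$ with $y_{j''} = \lambda y_{j'}$ and $\lambda < 0$, the sign constraints on $v^{(j')}$ force $v^{(j')}|_{H_j}$ to vanish on every other pair, so it is (up to positive scaling) the unique functional on $H_j$ vanishing on the $(k-2)$-dimensional span of those pairs, and $v^{(j'')}|_{H_j}$ is its negative. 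Combining $v^{(j')} \cdot y_{i^*} \geq 0$ and $v^{(j'')} \cdot y_{i^*} \geq 0$ (together with the non-negativity of $v^{(j')} \cdot y_j$, which contributes a non-negative correction after writing $y_{i^*} = p_{i^*} + q_{i^*} y_j$) forces $v^{(j')}|_{H_j}(p_{i^*}) = 0$. The $k-1$ such functionals (one per pair) are linearly independent on $H_j$, so their common kernel is trivial and $p_{i^*} = 0$ follows. Applying this for every $j$, the map $j \mapsto i^*(j)$ is a fixed-point-free involution pairing the $2k$ elements of $Y$ into $k$ pairs each collinear with $0$, as required.

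The main technical obstacles will be the ``lift and absorb'' construction in Case B, which requires using the strictly positive dependence on $S'$ inside $H_j$ as a reservoir to guarantee positivity of all lifted coefficients, and the $p_{i^*} = 0$ step in (ii), which must combine the pair structure of $Z_j$ obtained inductively with sign constraints coming from several different separating functionals $v^{(j')}$.
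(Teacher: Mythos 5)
The paper does not actually prove Theorem~\ref{steinitz}: it is imported as a classical result of Steinitz, with pointers to \cite{Steinitz} and \cite{Soltan}, so there is no in-paper argument to compare yours against. Judged on its own terms, your inductive proof via separating functionals and projection is sound and establishes both (i) and (ii). The key steps check out: minimality of $Y$ together with Lemma~\ref{lem:relint} rules out a strictly positive dependence on $Y\setminus\{y_j\}$; the theorem of the alternative you need (Stiemke's lemma, which you should cite precisely rather than ``Gordan-type'') produces $v^{(j)}$ with the stated signs and $P_j\neq\emptyset$; the projection along $y_j$ onto $\ker v^{(j)}\cap\operatorname{span}(Y)$ carries the positive dependence to a positive dependence because $q_i<0$ exactly on $P_j$; the lifts in Cases A and B produce tight subsets of size at most $2k-1$ and $2k$ respectively, forcing $|Y|\le 2k$; and in the extremal case the counting forces $|P_j|=1$ with $Z_j$ minimal of size $2(k-1)$, after which the dual-basis argument with the $k-1$ restricted functionals $v^{(j')}$ correctly yields $p_{i^*}=0$. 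This is a legitimately self-contained route, in contrast with the paper's reliance on the literature, and it has the advantage of delivering (ii) by the same induction that gives (i).

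Three details should be made explicit in a full write-up, though none is a genuine gap. First, distinct points of $Y$ may project to the same point of $H_j$, so $S$ can be a proper quotient of $\{y_i:i\neq j\}$; when lifting you must select one preimage per element of $S'$ (this only improves the size bounds), and the Case A/Case B dichotomy is cleanest as ``some element of $S'$ has no preimage in $Z_j$'' versus ``every element does.'' Second, the inductive hypothesis (ii) literally asserts only that each pair of $Z_j$ is collinear with the origin; the fact that each pair straddles the origin (your $\lambda<0$), which the sign argument for $v^{(j')}$ and $v^{(j'')}$ uses, needs the one-line observation that the $k-1$ pair-lines are linearly independent, so the strictly positive dependence on $Z_j$ restricts to a positive dependence on each line. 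Third, a minimal tight set never contains the origin itself (otherwise dropping it gives a smaller tight set), which keeps the preimages and pair-lines well defined. With these points spelled out, the proof is complete and correct.
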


We shall use Theorem \ref{steinitz} as well as some of its refinements that we
prove in the rest of this section.
For a subspace $H \subseteq \R^n$ the \textit{orthogonal complement} of $H$ is the subspace containing the elements of $\R^n$ orthogonal to each element of $H$.

\begin{lemma} \label{lem:steinitz_polytope}
    Let $X \subset \mathbb{R}^n$ be a finite set of points and let $x\in \mathbb{R}^n$. Suppose that conv($X$) is $k$-dimensional and $x$ is in the relative interior of conv($X$).
    Let $Z \subseteq X$ such that conv($Z$) is $k'$-dimensional and $x$ is in the relative interior of conv($Z$). Then there is a subset $Y \subseteq X$ of at most $2(k-k')+|Z|$ points such that conv($Y$) is $k$-dimensional, $x$ is in the relative interior of conv($Y$) and $Z \subseteq Y$.
\end{lemma}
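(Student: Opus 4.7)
The plan is to reduce to Theorem \ref{steinitz} by orthogonally projecting $X$ onto a subspace that quotients out the directions already spanned by $Z$, applying the known Steinitz bound there, and then lifting the result back. After translating so that $x$ is the origin, Lemma \ref{conv_lemma} gives $\dim \operatorname{span}(Z) = k'$ together with a strictly positive combination $\sum_{z \in Z}\beta_z z=0$, and analogously $\dim \operatorname{span}(X)=k$. I set $H = \operatorname{span}(Z)$ and $W = \operatorname{span}(X) \cap H^{\perp}$, so that $\operatorname{span}(X) = H \oplus W$ with $\dim W = k-k'$, and let $\pi$ be the orthogonal projection of $\R^n$ onto $H^{\perp}$.

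Projecting the strictly positive convex combination witnessing $0 \in \operatorname{conv}(X)$ immediately shows that $0$ is a strictly positive combination of elements of $\pi(X) \subseteq W$; by Lemma \ref{conv_lemma}, $0$ lies in the relative interior of $\operatorname{conv}(\pi(X))$ in $W$, and this polytope is $(k-k')$-dimensional. I then apply Theorem \ref{steinitz} inside $W$ to obtain $Y' \subseteq \pi(X)$ of size at most $2(k-k')$ with $0$ in the relative interior of $\operatorname{conv}(Y')$. Since $\pi(Z) = \{0\}$, if $0 \in Y'$ I delete it and renormalize the associated positive convex combination; the relative-interior property and the span of $Y'$ are preserved, so I may assume $0 \notin Y'$. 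Each remaining $y \in Y'$ has a preimage in $X \setminus Z$, because $\pi^{-1}(0) = H \supseteq Z$ and $y \neq 0$ forces such a preimage to lie outside $H$; picking one such preimage for each element produces $Y_1 \subseteq X \setminus Z$ with $|Y_1| \leq 2(k-k')$.

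Setting $Y = Z \cup Y_1$, so $Z \subseteq Y \subseteq X$ and $|Y| \leq |Z| + 2(k-k')$, the remaining task is to verify via Lemma \ref{conv_lemma} that $\operatorname{conv}(Y)$ is $k$-dimensional with $0$ in its relative interior. The positive combination $\sum_{y \in Y_1}\gamma_y \pi(y)=0$ furnished by Steinitz lifts to $\sum_{y \in Y_1}\gamma_y y = h$ for some $h \in H$; writing $h = \sum_{z \in Z}\delta_z z$ and adding $t\sum_{z \in Z}\beta_z z = 0$ for $t$ large enough that $t\beta_z > \delta_z$ for every $z \in Z$ produces a strictly positive combination of elements of $Y$ equal to $0$. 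Moreover $\operatorname{span}(Y) \supseteq H$, and its image under $\pi$ contains $\operatorname{span}(Y') = W$, which forces $\operatorname{span}(Y) \supseteq H \oplus W = \operatorname{span}(X)$; Lemma \ref{conv_lemma} then completes the argument.

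The only delicate step is the lifting bookkeeping: one must check that discarding the image of $Z$ from $Y'$ does not cost anything in the Steinitz bound and that every surviving point of $Y'$ has a preimage outside $Z$, and then merge the two strictly positive combinations via the $t\beta_z$ perturbation. The rest is routine linear algebra, with the orthogonal decomposition $\operatorname{span}(X) = H \oplus W$ doing most of the work.
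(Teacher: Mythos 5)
Your proof is correct and follows essentially the same route as the paper's: project $X$ orthogonally onto the complement of $\operatorname{span}(Z)$ inside $\operatorname{span}(X)$, apply Steinitz's theorem in that $(k-k')$-dimensional subspace, lift the selected points back to $X$, and merge the resulting positive combination with the strictly positive combination supported on $Z$. The only differences are cosmetic: you explicitly discard a possible zero projection before taking preimages, and you make the $Z$-coefficients positive by adding a large multiple $t$ of the combination on $Z$, where the paper instead rescales the lifted part by a small $\varepsilon$.
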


\begin{proof}
    We can assume without loss of generality that $x$ is the origin.
    
    Let $U$ denote the subspace span($Z$), which is $k'$-dimensional by Lemma~\ref{conv_lemma}, and let $W$ denote the subspace obtained by the intersection of the orthogonal complement of $U$ and span($X$).
    For a point $h \in \text{span}(X)$, denote the component of $h$ in $U$ by $h^U$, and the component in $W$  by $h^W$ (thus, $h=h^U+h^W$), and for a subset $Y \subseteq X$ denote $Y^W = \{ y^W: y \in Y\}$ the orthogonal projection of $Y$ to $W$.

    By Lemma~\ref{conv_lemma}, span($X$) is $k$-dimensional, span($Z$) is $k'$-dimensional, hence $W$ is $(k-k')$-dimensional.
    Each $h \in W$ can be expressed as a linear combination of elements of $X$, and orthogonally projecting both sides of this linear expression onto $W$, we obtain $h$ as a linear combination of elements in $X^W$ (since the orthogonal projection eliminates the $U$-directional components of the elements of $X$). So $\text{span}(X^W)=W$, and therefore span($X^W$) is $(k-k')$-dimensional.
    Moreover, if we take a strictly positive combination of the elements of $X$ expressing the origin, then projecting this orthogonally onto $W$ yields to a strictly positive combination of the elements of $X^W$ resulting in the origin.
    
    Thus, by Lemma~\ref{conv_lemma}, conv($X^W$) is a $(k - k')$-dimensional polytope with the origin in its relative interior. By applying Theorem~\ref{steinitz} to $X^W$ we obtain that there exists $Y \subseteq X$ with at most $2(k-k')$ elements, such that conv$(Y^W)$ is $(k - k')$-dimensional and it contains the origin in its relative interior.

    Then $Y \cup Z$ has at most $2(k-k')+|Z|$ elements. Now we prove that conv$(Y \cup Z)$ is $k$-dimensional and contains the origin in its relative interior.

    Let $Z = \{ z_1, \ldots, z_{|Z|} \}$ and $Y = \{ y_1, \ldots, y_{|Y|} \}$. First we show that the elements of $Y \cup Z$ generate span($X$). Let $h \in \text{span}(X)$. Since by Lemma~\ref{conv_lemma}, the elements of $Y^W$ generate $W$, we can express $h^W$ as
    $$
    h^W = \sum_{i=1}^{|Y|} \alpha_i y_i^W.
    $$
    Combining the corresponding elements in $Y$ with the same coefficients we obtain
    $$
    \sum_{i=1}^{|Y|} \alpha_i y_i = \sum_{i=1}^{|Y|} \alpha_i (y_i^W + y_i^U) = h^W + \sum_{i=1}^{|Y|} \alpha_i y_i^U.
    $$
    Since $\sum \alpha_i y_i^U \in \text{span}(Z)$, 
    $$
    \sum_{i=1}^{|Y|} \alpha_i y_i^U = \sum_{i=1}^{|Z|} \beta_i z_i,
    $$
    and by $h^U \in \text{span}(Z)$
    $$
    h^U = \sum_{i=1}^{|Z|} \gamma_i z_i.
    $$
    So
    $$
    h= h^W + h^U = \sum_{i=1}^{|Y|} \alpha_i y_i - \sum_{i=1}^{|Z|} \beta_i z_i + \sum_{i=1}^{|Z|} \gamma_i z_i.
    $$
    Thus, $h$ can be expressed as a linear combination of elements of $Y \cup Z$.

    Similarly, we can show that there is a strictly positive combination of the elements of $Y \cup Z$ resulting in the origin. We chose the set $Y$ such that 
    $$
    \sum_{i=1}^{|Y|} \alpha_i y_i^W = 0,
    $$
    where $\alpha_1, \ldots, \alpha_{|Y|} >0$.
    Combining the corresponding elements in $Y$ with the same coefficients, we obtain 
    $$
    \sum_{i=1}^{|Y|} \alpha_i y_i = \sum_{i=1}^{|Y|} \alpha_i (y_i^W + y_i^U) = 0 + \sum_{i=1}^{|Y|} \alpha_i y_i^U \in \text{span}(Z).
    $$
    Since the origin is in the relative interior of conv($Z$), for a sufficiently small positive $\varepsilon$, the vector $\varepsilon \cdot \left(-\sum \alpha_i y_i^U\right)$ can be expressed as a strictly positive convex combination of elements in $Z$, thus
    $$
    - \sum_{i=1}^{|Y|} \alpha_i y_i^U = \sum_{i=1}^{|Z|} \beta_i z_i,
    $$
    where  $\beta_1, \ldots, \beta_{|Z|} > 0$.
    Therefore, we have
    $$
    \sum_{i=1}^{|Y|} \alpha_i y_i + \sum_{i=1}^{|Z|} \beta_i z_i = 0,
    $$
    which is a strictly positive combination of elements in $Y \cup Z$ resulting in the origin.

    Therefore, by Lemma~\ref{conv_lemma}, conv$(Y \cup Z)$ is $k$-dimensional and it contains the origin in its relative interior.
\end{proof}

The next corollary is immediate from Lemma~\ref{lem:steinitz_polytope}. It also follows from 
a result of \cite{BonniceReay}.

\begin{corollary} \label{cor:BR}
    Let $X \subset \mathbb{R}^n$ be a finite set of points and let $x\in \mathbb{R}^n$. Suppose that conv($X$) is $k$-dimensional and $x$ is in the relative interior of conv($X$). Let $k'$ be the dimension of the highest dimensional simplex with vertices in $X$ and having $x$ in its relative interior. Then there is a subset $Y \subseteq X$ of at most $2k-k'+1$ points such that conv($Y$) is $k$-dimensional and $x$ is in the relative interior of conv($Y$).
\end{corollary}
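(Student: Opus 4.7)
The plan is to apply Lemma~\ref{lem:steinitz_polytope} with a carefully chosen starting set $Z$, namely the vertex set of the highest-dimensional simplex with vertices in $X$ having $x$ in its relative interior. The corollary is advertised as immediate, and the proof is indeed essentially a bookkeeping exercise once the right $Z$ is plugged in.

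First, I would let $S$ be a $k'$-dimensional simplex with vertex set $Z \subseteq X$ and with $x$ in its relative interior, whose existence is guaranteed by the hypothesis that $k'$ is the maximum such dimension. Since $S$ is a simplex of dimension $k'$, its vertex set satisfies $|Z| = k' + 1$. By construction, $\text{conv}(Z) = S$ is $k'$-dimensional and $x$ lies in its relative interior.

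Next, I would apply Lemma~\ref{lem:steinitz_polytope} to $X$, $x$, and this $Z$. The lemma yields a subset $Y \subseteq X$ with $Z \subseteq Y$ such that $\text{conv}(Y)$ is $k$-dimensional, $x$ is in the relative interior of $\text{conv}(Y)$, and
\[
|Y| \;\leq\; 2(k - k') + |Z| \;=\; 2(k - k') + (k' + 1) \;=\; 2k - k' + 1,
\]
which is exactly the bound claimed.

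There is no real obstacle here; the only thing worth double-checking is that the $k'$ appearing in the statement matches the dimension $k'$ of the $Z$ fed to the lemma, which is why I start by picking $Z$ to be a maximal simplex (so that $\dim \text{conv}(Z) = k'$ and $|Z| = k'+1$ simultaneously). Nothing needs to be said about $k'$ being well-defined: since $x$ is in the relative interior of the $k$-dimensional polytope $\text{conv}(X)$, Carathéodory-type reasoning (or just the fact that $x$ itself lies in $\text{conv}(X)$, which contains a $0$-dimensional point) ensures at least a trivial simplex exists, so $0 \le k' \le k$.
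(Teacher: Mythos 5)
Your proof is correct and is exactly the argument the paper intends when it calls the corollary ``immediate from Lemma~\ref{lem:steinitz_polytope}'': choose $Z$ to be the vertex set of a maximal $k'$-dimensional simplex containing $x$ in its relative interior, so that $|Z|=k'+1$, and read off the bound $2(k-k')+(k'+1)=2k-k'+1$. Nothing further is needed.
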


Lemma~\ref{lem:steinitz_polytope} also implies the
following version.

\begin{corollary} \label{steinitz_with_pairs}
    Let $X \subset \mathbb{R}^n$ be a finite set of points and conv($X$) be a $k$-dimensional polytope with the origin in its relative interior. Let $Z \subseteq X$ such that $Z = Z^{+} \dot\cup Z^{-}$ where $Z^{-} = \{-z: z \in Z^{+} \}$. Then there is a subset $Y \subseteq X$ of at most $2k$ points such that conv($Y$) is $k$-dimensional, the origin is in the relative interior of conv($Y$) and for each point $z \in Z$ either both $z$ and $-z$ are in $Y$ or neither of them is.
\end{corollary}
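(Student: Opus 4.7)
\medskip
\noindent\textbf{Proof plan.} The idea is to reduce $Z$ to a minimal antipodal subset spanning the same subspace, delete the remaining antipodal pairs from $X$, and then invoke Lemma~\ref{lem:steinitz_polytope} on the reduced point set.

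Concretely, let $k' = \dim \text{span}(Z)$ and choose $Z_0^+ \subseteq Z^+$ that is a basis of $\text{span}(Z^+) = \text{span}(Z)$, so $|Z_0^+| = k'$. Set $Z_0 = Z_0^+ \cup \{-z : z \in Z_0^+\} \subseteq Z$, so $|Z_0| = 2k'$ and $Z_0$ is itself a union of antipodal pairs; the all-ones combination on $Z_0$ equals the origin, so by Lemma~\ref{conv_lemma} the origin lies in the relative interior of the $k'$-dimensional polytope $\text{conv}(Z_0)$. Put $X' = X \setminus (Z \setminus Z_0)$. The plan is to verify that $\text{conv}(X')$ is still $k$-dimensional with the origin in its relative interior, after which Lemma~\ref{lem:steinitz_polytope}, applied to $X'$ with forced inner set $Z_0$, delivers $Y \subseteq X'$ with $Z_0 \subseteq Y$, $|Y| \leq 2(k-k') + |Z_0| = 2k$, and $\text{conv}(Y)$ of dimension $k$ containing the origin in its relative interior. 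Because $Y$ avoids $Z \setminus Z_0$ and contains $Z_0$, the antipodal condition holds automatically: each pair in $Z_0$ lies entirely inside $Y$ and each pair in $Z \setminus Z_0$ lies entirely outside $Y$.

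The main technical step I expect is showing that excising the discarded antipodal pairs preserves the positive representation of the origin. Starting from a strictly positive combination $\sum_{x \in X} \lambda_x x = 0$ provided by Lemma~\ref{lem:relint}, for each $z \in Z^+ \setminus Z_0^+$ I would expand $z = \sum_{i=1}^{k'} \alpha_i^z z_i$ in the basis $Z_0^+$ and rewrite
\[
\lambda_z z + \lambda_{-z}(-z) = \sum_{i=1}^{k'} (\lambda_z - \lambda_{-z}) \alpha_i^z z_i.
\]
Each resulting term with a nonnegative coefficient is folded into the coefficient of $z_i$, and each term with a negative coefficient is flipped into a positive coefficient on $-z_i$. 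Since the original coefficients $\lambda_{z_i}, \lambda_{-z_i}$ are strictly positive and only receive nonnegative additions, the new combination is supported on $X'$, sums to $0$, and remains strictly positive on every element of $X'$. By Lemma~\ref{conv_lemma} the origin then lies in the relative interior of $\text{conv}(X')$; since $Z \setminus Z_0 \subseteq \text{span}(Z_0) \subseteq \text{span}(X')$, one has $\text{span}(X') = \text{span}(X)$, so $\text{conv}(X')$ is indeed $k$-dimensional, and the invocation of Lemma~\ref{lem:steinitz_polytope} finishes the argument.
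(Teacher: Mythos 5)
Your proof is correct, and its core is the same as the paper's: pick a maximal linearly independent subset $Z_0^+$ of $Z^+$ (the paper calls it $Z_1$), note that $Z_0 = Z_0^+\cup(-Z_0^+)$ has the origin in the relative interior of its $k'$-dimensional convex hull via the all-ones combination and Lemma~\ref{conv_lemma}, and then apply Lemma~\ref{lem:steinitz_polytope} to force $Z_0$ into $Y$ with the budget $2(k-k')+2k'=2k$. The one genuine difference is your preliminary deletion of $Z\setminus Z_0$ from $X$, justified by the coefficient-rewriting argument that folds each discarded pair $\{z,-z\}$ into the basis pairs $\{z_i,-z_i\}$ while keeping all coefficients strictly positive. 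This extra step is not wasted effort: the paper applies Lemma~\ref{lem:steinitz_polytope} to all of $X$ and concludes only that $Z_1\cup Z_2\subseteq Y$, which by itself does not rule out $Y$ containing some $z\in Z\setminus(Z_1\cup Z_2)$ without its antipode; one has to argue separately (e.g.\ by taking the Steinitz selection in the projected space minimal, so that it avoids the points of $Z\setminus(Z_1\cup Z_2)$, all of which project to the origin) that the ``neither'' alternative really holds for the discarded pairs. By shrinking $X$ to $X'=X\setminus(Z\setminus Z_0)$ before invoking the lemma, you make that half of the conclusion automatic, at the modest cost of verifying that $\mathrm{conv}(X')$ is still $k$-dimensional with the origin in its relative interior --- which your rewriting argument does correctly, since Lemma~\ref{conv_lemma} only requires a strictly positive (not necessarily convex) combination summing to zero, and $\mathrm{span}(X')=\mathrm{span}(X)$ because the removed points lie in $\mathrm{span}(Z_0)$.
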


\begin{proof}
    Let $Z_1 \subseteq Z^+$ be a maximal linearly independent subset of $Z^+$, and let $Z_2 = \{-z: z \in Z_1 \} \subseteq Z^-$.
    The sum of the elements of $Z_1 \cup Z_2$ is zero, thus, by Lemma~\ref{conv_lemma}, conv($Z_1 \cup Z_2$) is $|Z_1|$-dimensional. Then by Lemma~\ref{lem:steinitz_polytope}, there is a subset $Y \subseteq X$ of at most $2(k-|Z_1|)+2|Z_1|=2k$ points such that conv($Y$) is $k$-dimensional, the origin is in the relative interior of conv($Y$) and $Z_1 \cup Z_2 \subseteq Y$, so for each point $z \in Z$ either both $z$ and $-z$ are in $Y$ or neither of them is.
\end{proof}

\section{The upper bounds}
\label{sec:4}


Recall that a $d$-dimensional infinitesimally rigid tensegrity framework $(T,p)$ with $T=(V,B\cup C\cup S)$ is minimally infinitesimally rigid, if $(T-e,p)$ is not
infinitesimally rigid for all $e\in B\cup C\cup S$.
In this section we prove a tight upper bound on the number of members of a
minimally infinitesimally rigid tensegrity framework in terms of $d$ and $|V|$.

If $(T,p)$ is infinitesimally rigid with $|V|\leq d+1$, then there must be a bar (or a parallel pair of a cable and a strut) between each pair $u,v\in V$, which makes
this special case uninteresting. So we shall always assume that $|V|\geq d+2$ holds.

For a $d$-dimensional tensegrity framework $(T,p)$ let $R^{-}(T,p)$ be the matrix obtained from $R(T,p)$ by multiplying each row  
corresponding to a cable by $-1$,
and adding a new row $-R_e(T,p)$, whenever $R_e(T,p)$ is a
row corresponding to some bar $e$.
%
So there are $2|B| + |C \cup S|$ rows in $R^{-}(T,p)$. 
Note that the rank of $R^{-}(T,p)$ is equal to the rank of $R(T,p)$.

\begin{theorem} \label{th_parallel_bars}
    Let $(T,p)$ be a minimally infinitesimally rigid 
    tensegrity framework in $\R^d$, where $T=(V,B \cup C \cup S)$ and $|V| \geq d+2$. Then 
    \begin{equation}
    \label{upgen}
    2|B| + |C \cup S| \leq 2 \cdot \left( d|V|-\binom{d+1}{2}\right).
    \end{equation}
\end{theorem}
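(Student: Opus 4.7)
The plan is to show $|X| \leq 2k$, where $X \subset \R^{d|V|}$ is the set of rows of $R^{-}(T,p)$ and $k := d|V| - \binom{d+1}{2}$; this is exactly the desired inequality $2|B| + |C \cup S| \leq 2k$. Because $(T,p)$ is infinitesimally rigid and $|V| \geq d+2$, the rank of $R(T,p)$, and hence of $R^{-}(T,p)$, equals $k$. The strategy is to apply Corollary \ref{steinitz_with_pairs} to $X$ and then use the minimally properly stressed property (Lemma \ref{minps}) to force the resulting Steinitz subset to be all of $X$.

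First I will verify that the origin lies in the relative interior of $\text{conv}(X)$, which will then automatically be $k$-dimensional by Lemma \ref{conv_lemma}. By Lemma \ref{minps}, $(T,p)$ admits a proper stress $\omega$; I convert it into a strictly positive linear combination of the rows of $R^{-}(T,p)$ summing to zero, using coefficient $|\omega(e)|$ on $-r_e$ for each cable $e$, coefficient $\omega(e)$ on $r_e$ for each strut, and, for each bar $b$, splitting the contribution $\omega(b)\,r_b$ as $(\max\{\omega(b),0\}+\epsilon)\,r_b + (\max\{-\omega(b),0\}+\epsilon)\,(-r_b)$ for some $\epsilon>0$. Applying Corollary \ref{steinitz_with_pairs} with $Z^{+} = \{r_b : b \in B\}$ and $Z^{-} = \{-r_b : b \in B\}$ then produces $Y \subseteq X$ with $|Y| \leq 2k$ such that $\text{conv}(Y)$ is $k$-dimensional, the origin lies in its relative interior, and for every bar $b$ the two rows $r_b, -r_b$ are either both in $Y$ or both outside $Y$.

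The heart of the proof is to argue $Y = X$. Assume for contradiction that some $r \in X \setminus Y$ exists. If $r$ corresponds to a cable or strut $e$, then $X \setminus \{r\}$ is exactly the row set of $R^{-}(T-e,p)$; if $r$ is a bar row of some bar $b$, the pairing property of Corollary \ref{steinitz_with_pairs} forces both $r_b$ and $-r_b$ to lie in $X \setminus Y$, and $X \setminus \{r_b, -r_b\}$ is the row set of $R^{-}(T-b,p)$. In each case I claim the reduced matrix still has rank $k$: for a cable or strut, $e$ lies in the support of $\omega$ because $\omega$ is proper, so removing its row leaves the rank of $R$ unchanged; for a bar $b$, every proper stress must satisfy $\omega(b) \neq 0$, because otherwise the restriction of $\omega$ to $T-b$ would itself be a proper stress of $(T-b,p)$, contradicting Lemma \ref{minps}. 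Since $Y$ is contained in the reduced row set and both affine hulls contain $0$ and have dimension $k$, they must coincide; consequently the relative interior of $\text{conv}(Y)$ is contained in the relative interior of the reduced convex hull. Lemma \ref{lem:relint} then supplies a strictly positive convex combination of the reduced rows summing to $0$, which converts, exactly as in the first step, into a proper stress of $(T-e,p)$ or $(T-b,p)$, contradicting the minimal property from Lemma \ref{minps}.

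I expect the bar case to be the main obstacle. Removing only one of the two rows of a bar does not correspond to any natural subframework, so the pairing property from Corollary \ref{steinitz_with_pairs} is essential; and the minimality-at-bars observation, which forces every proper stress to be nonzero on every bar, is needed to keep the rank from dropping when both bar rows are removed. Once these two ingredients are in place, the argument proceeds uniformly in the three member types.
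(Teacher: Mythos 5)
Your proof is correct and follows essentially the same route as the paper: the same sign-adjusted matrix $R^{-}(T,p)$, the same point set $X$, Lemma \ref{conv_lemma} to place the origin in the relative interior of an $N$-dimensional $\mathrm{conv}(X)$, and Corollary \ref{steinitz_with_pairs} with the bar rows as the paired set $Z$. The only (harmless) difference is in the endgame: the paper converts the Steinitz subset $Y$ back into an infinitesimally rigid subframework via Theorem \ref{fund} and uses monotonicity of infinitesimal rigidity, while you extract a proper stress of $(T-e,p)$ directly from $\mathrm{relint}\,\mathrm{conv}(Y)\subseteq\mathrm{relint}\,\mathrm{conv}(X\setminus\{r\})$ and contradict minimal proper stressedness via Lemma \ref{minps} (your rank-preservation digression is not actually needed for this).
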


\begin{proof}
Let $N=d|V|-\binom{d+1}{2}$ and let
Let $X$ denote the set of points in $\R ^{d|V|}$ whose coordinates are given by the rows of $R^{-}(T,p)$, and let $Z \subseteq X$ denote the $2|B|$ points corresponding to (pairs of) rows associated with the bars.

Since $(T,p)$ is infinitesimally rigid and $|V|\geq d+2$, 
Theorem \ref{fund} implies that

\begin{itemize}
    \item[(1)] $(\overline{T},p)$ is infinitesimally rigid, or equivalently, the rank of $R^{-}(T,p)$ is equal to $N$,
    or equivalently, the subspace span($X$) is $N$-dimensional, and
    \item[(2)] there exists a proper stress of $(T,p)$, or equivalently, there exists $\omega \in \R ^E$ such that $\omega >0$ and $\omega \cdot R^{-}(T,p) = 0$, or equivalently,
    there is a strictly positive combination of the elements of $X$ resulting in zero.
\end{itemize}

\noindent Suppose, for 
a contradiction, that 
$2|B| + |C \cup S| \geq 2N + 1$ holds.
We can use
(1), (2), and 
Lemma~\ref{conv_lemma} to deduce that conv($X$) is an $N$-dimensional polytope containing the origin in its relative interior.

By Corollary~\ref{steinitz_with_pairs}, there exists a set $Y \subset X$ with $|Y|\leq 2N$ such that conv($Y$) is an $N$-dimensional polytope with the origin in its relative interior, and for each point $z \in Z$ 
we have $|Y\cap \{z,-z\}|\in \{0,2\}$.
Thus for each bar $b \in B$ we have that either both of the corresponding points of $X$ are in $Y$ or neither of them is. Therefore, the matrix $R^{-}_Y(T,p)$ 
induces a subframework of $(T,p)$, that we shall denote
by $(T_Y,p)$.
%
Moreover, Lemma~\ref{conv_lemma},
the choice of $Y$, and Theorem \ref{fund} imply that
$(T_Y,p)$
is infinitesimally rigid. 
Since the addition of new members to an infinitesimally rigid tensegrity framework preserves infinitesimally rigidity, we obtain that $(T-e,p)$ is 
infinitesimally rigid for any member $e$ whose row (or whose
pair of rows) in $R^{-}(T,p)$ is disjoint from $Y$.
By our assumption, we have
$2|B| + |C \cup S| \geq 2N + 1$, so such an $e$ exists.
This contradicts the minimality of $(T,p)$.
\end{proof}

The upper bound of 
Theorem~\ref{th_parallel_bars} is best possible:
equality holds in (\ref{upgen}) if
$(T,p)$ is obtained from a minimally infinitesimally rigid $d$-dimensional bar-and-joint framework by replacing some of the bars by parallel cable-strut pairs.
There exist different types of extremal examples, too, see Figures
\ref{4v6} and \ref{6v12}. The latter figure is from \cite[Figure 15]{con}.

\begin{figure}[!h]
\begin{center}
\includegraphics[width=5cm]{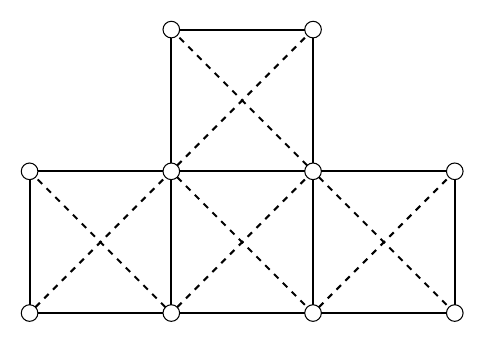}
\caption{\label{4v6} A minimally infinitesimally rigid tensegrity framework in the plane with
$2|B|+|C\cup S|=4|V|-6$.}
\end{center}
\end{figure}

\begin{figure}[!h]
\begin{center}
\includegraphics[width=5cm]{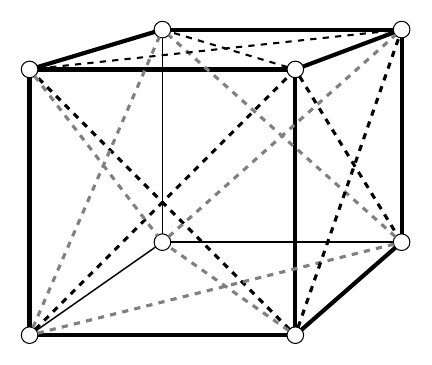}
\caption{\label{6v12}  A minimally infinitesimally rigid tensegrity framework in ${\mathbb R}^3$ with
$2|B|+|C\cup S|=6|V|-12$.}
\end{center}
\end{figure}

\subsection{Frameworks with no bars}

In the rest of this section we consider tensegrity frameworks which contain no bars, but
may contain parallel cable-strut pairs (and hence $T$ may not be simple). 
Although frameworks of this type can achieve equality in (\ref{upgen}),
we can say more about the extremal cases.


\begin{theorem} \label{th_parallel}
    Let $(T,p)$ be a minimally infinitesimally rigid tensegrity framework in $\R^d$ which contains no bars, where $T=(V,C \cup S)$ and $|V| \geq d+2$. Then 
    $$
    |C \cup S| \leq 2 \cdot \left( d|V|-\binom{d+1}{2}\right).
    $$
    Furthermore, equality holds if and only if $(T,p)$ is obtained from a minimally rigid bar-and-joint framework by replacing each bar by a cable and strut pair.
\end{theorem}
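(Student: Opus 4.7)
The inequality is immediate from Theorem~\ref{th_parallel_bars} applied with $B=\emptyset$. To identify the extremal case, let $N:=d|V|-\binom{d+1}{2}$ and suppose $|C\cup S|=2N$. Following the scheme of the proof of Theorem~\ref{th_parallel_bars}, let $X\subset\R^{d|V|}$ be the multiset of $2N$ points given by the rows of $R^-(T,p)$. I would first show that the points of $X$ are pairwise distinct. If two members $e_1,e_2$ of $T$ produced the same row of $R^-(T,p)$, they would be two parallel cables (or two parallel struts) on some vertex pair $uv$; then any proper stress $\omega$ of $(T,p)$ would yield a proper stress $\omega'$ of $(T-e_2,p)$ via $\omega'(e_1):=\omega(e_1)+\omega(e_2)$ and $\omega'(e):=\omega(e)$ otherwise, because the strict sign conditions survive and the dependence persists since $R_{e_1}=R_{e_2}$. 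By Lemma~\ref{minps} this contradicts the minimal infinitesimal rigidity of $(T,p)$.

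Next I would argue that $X$ is a minimal subset of $\R^{d|V|}$ for which $\mathrm{conv}(X)$ is $N$-dimensional and contains the origin in its relative interior. If some $Y\subsetneq X$ had this property, then the subframework $(T_Y,p)$ determined by the rows in $Y$ would be infinitesimally rigid (by Lemma~\ref{conv_lemma} and Theorem~\ref{fund}, exactly as in the proof of Theorem~\ref{th_parallel_bars}), so any $e$ whose row lies outside $Y$ would make $(T-e,p)$ infinitesimally rigid, contradicting minimality once more. Theorem~\ref{steinitz}(ii) applied to this minimal $X$ with $|X|=2N$ then forces $X$ to decompose into $N$ pairs $\{y_{2i-1},y_{2i}\}$, each collinear with the origin. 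Two rows of $R^-(T,p)$ can be nonzero scalar multiples of one another only when their underlying members share the same vertex pair, and the multiple is negative precisely when one member is a cable and the other a strut on that pair. Therefore $T$ splits into $N$ parallel cable--strut pairs supported on $N$ distinct vertex pairs. Writing $G$ for the underlying simple graph on $V$ with these $N$ edges, the row space of $R(G,p)$ coincides with that of $R(\overline T,p)$, so $\mathrm{rank}\,R(G,p)=N$ and $(G,p)$ is a minimally infinitesimally rigid bar-and-joint framework.

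For the converse direction, suppose $(G,p)$ is a minimally infinitesimally rigid bar-and-joint framework and $T$ is obtained by replacing each bar of $G$ with a parallel cable--strut pair. Then $|C\cup S|=2N$, $R(T,p)$ has the same row space as $R(G,p)$, and assigning $-1$ to every cable and $+1$ to every strut produces a proper stress of $(T,p)$, so $(T,p)$ is infinitesimally rigid by Theorem~\ref{fund}. Removing any member, say a cable $c$ on $uv$, leaves its partner strut $s$ unmatched while the other $N-1$ vertex pairs still carry both members. In any dependence $\omega$ of $(T-c,p)$, collapsing each surviving parallel pair yields a linear combination of the rows of the $\mathcal R_d$-independent set $R(G,p)$ in which $\omega(s)$ appears as the coefficient of $R_s$; independence forces $\omega(s)=0$, ruling out a proper stress. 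The symmetric argument handles the deletion of a strut, and Lemma~\ref{minps} concludes minimal infinitesimal rigidity.

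I expect the main obstacle to be the distinctness step combined with the matroidal interpretation of the collinear pairs furnished by Theorem~\ref{steinitz}(ii). Both rely essentially on the ``no bars'' hypothesis: a bar $uv$ would contribute a pair of already-collinear rows to $R^-(T,p)$ that does not arise from a cable--strut matching, so a Steinitz pair need not correspond to a parallel cable--strut pair in $T$ and the clean extremal structure would fail.
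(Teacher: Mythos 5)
Your proposal is correct and follows the paper's own route: the inequality comes from Theorem~\ref{th_parallel_bars} with $B=\emptyset$, and the equality case comes from applying Theorem~\ref{steinitz}(ii) to the rows of $R^{-}(T,p)$ to force collinear-through-the-origin pairs, which can only be parallel cable--strut pairs. You supply several details the paper leaves implicit --- the distinctness of the rows, the minimality of $X$ needed to invoke Steinitz~(ii), and the full converse verification that the doubled framework is minimally infinitesimally rigid --- all of which are accurate.
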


\begin{proof}
The first statement follows\footnote{In fact it follows from a simpler, weaker version of Theorem \ref{th_parallel_bars}, whose proof employs Theorem \ref{steinitz}(i) directly, rather than Corollary \ref{steinitz_with_pairs}.} from Theorem \ref{th_parallel_bars} by putting
$B=\emptyset$.
To see the second part observe that
by Theorem \ref{steinitz}(ii)
a minimally infinitesimally rigid tensegrity framework $(T,p)$ has exactly $2\cdot \left( d|V|-\binom{d+1}{2} \right)$ edges if and only if the points in $\R ^{d|V|}$ whose coordinates are the rows of $R^{-}(T,p)$ are collinear in pairs with the origin. 
Such a pair must correspond to a parallel cable-strut pair
in $(T,p)$.
\end{proof}


Now we use Corollary \ref{cor:BR} to show that 
if the framework is generic and not extremal, then there is a gap
between its size and the extremal value.





\begin{lemma} \label{cor1}
    Let $(T,p)$ be a generic minimally infinitesimally rigid realization of $T=(V,C \cup S)$ in $\R^d$ with $|V| \geq d+2$. Let $\omega$ be a stress of $(T,p)$ such that $\text{supp}(\omega)$ is a circuit of $\mathcal{R}_d(T)$. Let $H=\text{supp}(\omega)$, then 
    $$
    |C \cup S| \leq 2 \cdot \left( d|V|-\binom{d+1}{2}\right)-|H|+2.
    $$
\end{lemma}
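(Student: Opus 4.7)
The plan is to adapt the proof of Theorem~\ref{th_parallel_bars} (in its bar-free form, Theorem~\ref{th_parallel}), replacing the use of Corollary~\ref{steinitz_with_pairs} with the sharper Corollary~\ref{cor:BR}. Set $N = d|V| - \binom{d+1}{2}$ and let $X \subset \R^{d|V|}$ be the set of points whose coordinates are the rows of $R^{-}(T,p)$; since $T$ has no bars, each member of $T$ contributes exactly one point, so $|X| = |C \cup S|$. By infinitesimal rigidity, Theorem~\ref{fund}, and Lemma~\ref{conv_lemma} (exactly as at the start of the proof of Theorem~\ref{th_parallel_bars}), $\mathrm{conv}(X)$ is an $N$-dimensional polytope containing the origin in its relative interior.

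The new ingredient is to exhibit a high-dimensional simplex with vertices in $X$ whose relative interior contains the origin. Let $X_H \subseteq X$ be the $|H|$ points corresponding to the members in $H = \supp \omega$. Because $\omega$ is a stress, negating the cable entries yields coefficients $\alpha_e > 0$ for $e \in H$ satisfying $\sum_{e \in H} \alpha_e\, x_e = 0$. By genericity, the rank of the submatrix of $R^{-}(T,p)$ indexed by $H$ equals $r_d(H) = |H|-1$, so $\mathrm{span}(X_H)$ is $(|H|-1)$-dimensional. Moreover $X_H$ is affinely independent: any affine relation $\sum_{e \in H} \mu_e\, x_e = 0$ with $\sum \mu_e = 0$ would be a linear dependence supported inside $H$, and by the minimality of the circuit it must be a scalar multiple of $(\alpha_e)$; since $\sum \alpha_e > 0$, this forces $\mu \equiv 0$. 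Hence $\mathrm{conv}(X_H)$ is a genuine $(|H|-1)$-dimensional simplex, and by Lemma~\ref{conv_lemma} the origin lies in its relative interior.

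I then invoke Corollary~\ref{cor:BR} with $k = N$ and with simplex dimension at least $|H|-1$: there exists $Y \subseteq X$ with $|Y| \leq 2N - (|H|-1) + 1 = 2N - |H| + 2$ such that $\mathrm{conv}(Y)$ is $N$-dimensional and contains the origin in its relative interior. As in the last paragraph of the proof of Theorem~\ref{th_parallel_bars}, Lemma~\ref{conv_lemma} combined with Theorem~\ref{fund} implies that the subframework $(T_Y,p)$ induced by $Y$ is infinitesimally rigid. If $|C \cup S| > 2N - |H| + 2$, then $|C \cup S| > |Y|$, and some member $e$ corresponds to a point outside $Y$; then $(T-e,p) \supseteq (T_Y,p)$ is infinitesimally rigid, contradicting the minimality of $(T,p)$. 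This yields the claimed bound $|C \cup S| \leq 2N - |H| + 2$.

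The main obstacle is the affine-independence step for $X_H$: this is precisely the new input that upgrades the weaker bound from Theorem~\ref{th_parallel} into one sensitive to the size of the circuit $H$, and without it Corollary~\ref{cor:BR} could not be applied with the sharpened parameter $k' = |H|-1$. Everything else is a routine transcription of the convex-geometric reduction already carried out in the proof of Theorem~\ref{th_parallel_bars}.
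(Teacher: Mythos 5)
Your proposal is correct and follows essentially the same route as the paper: exhibit the $(|H|-1)$-dimensional simplex spanned by the points of $R^{-}(T,p)$ indexed by the circuit $H$ (with the origin in its relative interior, via the positive coefficients coming from $\omega$), then apply Corollary~\ref{cor:BR} in place of Corollary~\ref{steinitz_with_pairs} and finish exactly as in Theorem~\ref{th_parallel_bars}. The only cosmetic difference is that the paper gets the simplex property for free from the dimension count ($\mathrm{conv}$ of $|H|$ points being $(|H|-1)$-dimensional forces affine independence), whereas you verify affine independence directly from the uniqueness of the circuit's dependence; both are valid.
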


\begin{proof}
    Let $R_H^{-}(T,p)$ be the matrix obtained by replacing the rows corresponding to cables in the matrix $R_H(T,p)$ with their negatives, and let $X$ denote the set of points in $\R^{d|V|}$ whose coordinates are the rows of $R_H^{-}(T,p)$. Since $\text{supp}(\omega) = H$, there is a strictly positive combination of the elements of $X$ resulting in zero. Moreover, $H$ is a circuit of $\mathcal{R}_d(T)$, so the dimension of span($X$) is $|H|-1=|X|-1$. Therefore, by Lemma~\ref{conv_lemma}, conv($X$) is $(|X|-1)$-dimensional. Thus it is a simplex, and the origin is in the relative interior of conv($X$). Hence  the proof of Theorem~\ref{th_parallel_bars} 
    combined with Corollary~\ref{cor:BR} gives the improved bound
$|Y| \leq 2N - (|X|-1) +1 = 2N-|H|+2$, as claimed.
\end{proof}


\begin{theorem} \label{cor2}
    Let $(T,p)$ be a generic minimally infinitesimally rigid tensegrity framework 
    in $\R^d$ with no bars. Let  $T=(V,C \cup S)$ and suppose that $|V| \geq d+2$. If there exists a pair of adjacent vertices $u, v$ of $T$ connected by one member only, then
    $$
    |C \cup S| \leq 2 \cdot \left( d|V|-\binom{d+1}{2}\right)-\binom{d+2}{2} +2.
    $$
\end{theorem}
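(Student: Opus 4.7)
Let $e = uv$ denote the unique member of $T$ between $u$ and $v$. My plan is to exhibit a stress $\omega$ of $(T,p)$ whose support $H$ is a circuit of $\mathcal{R}_d(T)$ of size at least $\binom{d+2}{2}$, and then to apply Lemma~\ref{cor1} directly:
$$|C \cup S| \leq 2\left(d|V|-\binom{d+1}{2}\right) - |H| + 2 \leq 2\left(d|V|-\binom{d+1}{2}\right) - \binom{d+2}{2} + 2.$$

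\paragraph{Locating a circuit through $e$.} First I will use Lemma~\ref{minps} to see that $(T-e,p)$ admits no proper stress; this forces $e$ to lie in the support of every proper stress $\omega^*$ of $(T,p)$, since otherwise $\omega^*|_{E\setminus\{e\}}$ would itself be a proper stress of $(T-e,p)$, contradicting the minimality. Applying Lemma~\ref{own} to such a proper stress will then yield a stress $\omega$ of $(T,p)$ with $e\in\supp \omega$ such that $H:=\supp \omega$ is a circuit of $\mathcal{R}_d(T)$.

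\paragraph{Lower bound $|H|\geq \binom{d+2}{2}$.} I will establish this in two steps. Step one: $H$ contains no pair of parallel edges, since such a pair $\{f,f'\}$ would itself form a two-element circuit of $\mathcal{R}_d(T)$ (coincident rows up to sign in $R(T,p)$), and since $e \notin \{f,f'\}$ by hypothesis, $H$ would strictly contain this smaller circuit, violating the circuit axiom. Step two: I invoke the standard fact that every vertex of such a rigidity circuit has degree at least $d+1$: passing to an algebraically generic realization of the underlying simple graph, a vertex $w$ of $H$-degree $k\leq d$ would yield the equilibrium relation $\sum_{i=1}^{k} \omega(wu_i)(p(u_i)-p(w))=0$ with the $k$ generic vectors $p(u_i)-p(w)$ linearly independent, forcing $\omega(wu_i)=0$ on each of these edges and contradicting $wu_i \in H$. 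Since $K_{d+1}$ is $\mathcal{R}_d$-independent, $H$ must also span at least $d+2$ vertices, and a handshake count then gives $|H|\geq \frac{(d+1)(d+2)}{2}=\binom{d+2}{2}$.

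\paragraph{Main obstacle.} The delicate part will be this lower bound on $|H|$: the hypothesis that $e$ has no parallel companion in $T$ is used precisely to rule out $H$ being a trivial two-element parallel-pair circuit, which is exactly what activates the standard $\binom{d+2}{2}$ bound for genuine rigidity circuits. One must also recall that, although the paper's definition of generic is weaker than algebraic genericity, the resulting matroid $\mathcal{R}_d(T)$ coincides with the standard generic rigidity matroid, so the degree-at-least-$(d+1)$ argument proven via an algebraically generic realization applies to our circuit $H$ verbatim.
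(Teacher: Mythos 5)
Your proposal is correct and follows essentially the same route as the paper: use Lemma~\ref{own} together with genericity to obtain a stress through $e=uv$ whose support $H$ is an $\mathcal{R}_d$-circuit, lower-bound $|H|$ by $\binom{d+2}{2}$, and conclude via Lemma~\ref{cor1}. The only difference is one of detail: the paper cites the circuit-size bound as known and leaves implicit why $e$ lies in the support of a stress, whereas you supply both arguments (the latter via Lemma~\ref{minps} and minimality).
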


\begin{proof} Let $e=uv$.
    By Lemma \ref{own} and the genericity assumption,
    there exists a stress $\omega$ of $(T,p)$ for
    which $\supp \omega$ is
    an $\mathcal{R}_d$-circuit and $e\in \supp \omega$.  Since $\mathcal{R}_d$-circuits have at least $\binom{d+2}{2}$ edges, 
    the upper bound follows  from  Lemma~\ref{cor1}.
\end{proof}

\section{Circuit decomposition of matroids}
\label{sec:5}

In this section we introduce the combinatorial results and notions we shall
need in the rest of the paper.
We begin with some new observations concerning the circuits of the
rigidity matroid. For a graph $G=(V,E)$ with $|V|\geq d+2$
let $$k_d(G)=d|V|-\binom{d+1}2-r_d(G)$$ denote the ($d$-dimensional) {\it degrees of freedom} of $G$. We have $k_d(G)\geq 0$, with equality if and only if $G$ is rigid in
$\R^d$.

It is well-known that the minimum degree of an ${\cal R}_d$-circuit is at least $d + 1$.
In fact, every ${\cal R}_d$-circuit is $(d+1)$-edge-connected,
see \cite{JJdress}.
Moreover, it follows from \cite[Theorem 3.5]{JJsparse} that $K_{d + 2}$ is the only $(d + 1)$-regular ${\cal R}_d$-circuit for $d\geq 2$. 

\begin{lemma}
\label{mindeg}
Let $G = (V,E)$ be a $(d+1)$-regular ${\cal R}_d$-circuit for some $d\geq 1$.
Then either $d=1$ and $G$ is a cycle, or $d\geq 2$ and $G=K_{d + 2}$.
\end{lemma}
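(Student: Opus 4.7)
My plan is to split the proof on the value of $d$ and handle each regime separately.

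For the base case $d=1$, I would invoke the standard identification of ${\cal R}_1(G)$ with the graphic (cycle) matroid of $G$: a subset of edges is independent precisely when it spans a forest, so the circuits of ${\cal R}_1(G)$ are exactly the (edge sets of) simple cycles in $G$. Since every simple cycle is automatically $2$-regular, the assumption that $G$ is $(d+1)=2$-regular imposes no further restriction, and one concludes immediately that $G$ is a cycle. This disposes of the first case in essentially a single sentence.

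For $d \geq 2$, the claim is exactly the statement attributed to \cite[Theorem 3.5]{JJsparse} in the sentence immediately preceding the lemma, namely that $K_{d+2}$ is the only $(d+1)$-regular ${\cal R}_d$-circuit. My proof for this case therefore reduces to citing that reference. To give a little more structure, I would additionally record two elementary supporting observations: first, every ${\cal R}_d$-circuit has at least $d+2$ vertices (the minimum being realized by $K_{d+2}$, which has $\binom{d+2}{2}$ edges of rank $\binom{d+2}{2}-1$); and second, on exactly $d+2$ vertices the only $(d+1)$-regular simple graph is $K_{d+2}$ itself, which is a well-known ${\cal R}_d$-circuit. Together with the cited theorem these pin down $G=K_{d+2}$.

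The main and essentially only obstacle is that the $d\geq 2$ case is not self-contained at an elementary level: excluding $(d+1)$-regular ${\cal R}_d$-circuits on more than $d+2$ vertices is the substantive content of the Jackson--Jord\'an sparsity theorem, and it cannot be extracted from the naive edge count $(d+1)n/2 = |E| \leq dn - \binom{d+1}{2} + 1$, which only yields the lower bound $n \geq d+2$ (for $d \geq 2$) rather than an upper bound. Once JJsparse is granted, the remainder of the proof is pure bookkeeping across the two cases.
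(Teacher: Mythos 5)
Your proposal is correct and matches the paper's (implicit) argument: the paper gives no separate proof of this lemma, relying on the sentence immediately preceding it, which cites \cite[Theorem 3.5]{JJsparse} for the statement that $K_{d+2}$ is the only $(d+1)$-regular ${\cal R}_d$-circuit when $d\geq 2$, while the $d=1$ case is the standard identification of ${\cal R}_1(G)$ with the cycle matroid. Your added bookkeeping (the lower bound $|V|\geq d+2$ and the observation that the elementary count cannot replace the cited theorem) is accurate but not needed beyond what the paper already assumes.
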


For $d=1,2$, every 
${\cal R}_d$-circuit $G=(V,E)$ is rigid and has exactly $d|V|-\binom{d+1}2 +1$ edges.
In higher dimensions an ${\cal R}_d$-circuit may not be rigid and its size is not
uniquely determined by $d$ and $|V|$.

\begin{lemma} \label{circuits}
	Let $G = (V,E)$ be an ${\cal R}_d$-circuit for some $d\geq 2$. Then
 \begin{equation}
 \label{ceq1}
  |E| = d |V| - \binom{d + 1}2 - k_d (G) + 1,
  \end{equation}
  \begin{equation}
      \label{ceq2}
%
		k_d (G) \le \frac12 (d - 1) |V| - \binom{d + 1}2 + 1,\ \hbox{ with equality if and only if}\ G = K_{d + 2},
  \end{equation}
  \begin{equation}
  \label{ceq3}
%
		|E| \le (d + 1) |V| - \binom{d + 2}2 - \frac{d + 1}{d - 1} k_d (G),\ \hbox{ with equality if and only if}\ G = K_{d + 2}.
\end{equation}
\end{lemma}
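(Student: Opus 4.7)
The proof is essentially a sequence of short calculations built on top of the earlier facts that every $\mathcal R_d$-circuit has minimum degree at least $d+1$ and, in the $(d+1)$-regular case, must equal $K_{d+2}$ (Lemma \ref{mindeg}). I would proceed in three steps, treating the three inequalities in order; the heart of the argument is (\ref{ceq2}), while (\ref{ceq1}) and (\ref{ceq3}) are essentially bookkeeping.

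\textbf{Step 1: Equation (\ref{ceq1}).} By definition of an $\mathcal R_d$-circuit, deleting any single edge from $G$ leaves an $\mathcal R_d$-independent subgraph, so $r_d(G) = |E|-1$. Combining this with the definition $k_d(G) = d|V|-\binom{d+1}{2} - r_d(G)$ gives $|E| = d|V| - \binom{d+1}{2} - k_d(G) + 1$.

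\textbf{Step 2: Inequality (\ref{ceq2}) and its equality case.} Because every $\mathcal R_d$-circuit has minimum degree at least $d+1$ (cited from \cite{JJdress}), a degree sum gives $2|E| \geq (d+1)|V|$. Substituting the expression for $|E|$ from (\ref{ceq1}) and rearranging yields
\[
k_d(G) \leq d|V| - \binom{d+1}{2} + 1 - \tfrac{d+1}{2}|V| = \tfrac{1}{2}(d-1)|V| - \binom{d+1}{2} + 1,
\]
which is (\ref{ceq2}). Equality forces $2|E| = (d+1)|V|$, i.e.\ $G$ is $(d+1)$-regular, and then Lemma \ref{mindeg} (with the assumption $d\geq 2$) forces $G = K_{d+2}$. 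Conversely one checks directly that $K_{d+2}$ is rigid (so $k_d(K_{d+2})=0$) and that the right-hand side evaluates to $0$ when $|V|=d+2$.

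\textbf{Step 3: Inequality (\ref{ceq3}).} This is an algebraic consequence of the previous two. Starting from (\ref{ceq1}) and adding and subtracting $\tfrac{2}{d-1}k_d(G)$, one can write
\[
|E| = (d+1)|V| - \binom{d+2}{2} - \tfrac{d+1}{d-1}k_d(G) + \Delta,
\]
where
\[
\Delta = \tfrac{2}{d-1}k_d(G) - (|V| - d - 2) = \tfrac{2}{d-1}\!\left(k_d(G) - \tfrac{(d-1)(|V|-d-2)}{2}\right).
\]
A short computation shows that the bound $k_d(G) \leq \tfrac{(d-1)(|V|-d-2)}{2}$ is exactly (\ref{ceq2}) rewritten using $\binom{d+1}{2} = \tfrac{d(d+1)}{2}$, so $\Delta \leq 0$ with equality under the same condition, namely $G = K_{d+2}$. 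This establishes (\ref{ceq3}) together with its equality characterization.

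The only point requiring any thought is verifying that the equality characterizations for (\ref{ceq2}) and (\ref{ceq3}) agree: the derivation in Step 3 shows they coincide because the gap $\Delta$ is a positive multiple of the slack in (\ref{ceq2}). Otherwise the argument is purely mechanical once (\ref{ceq1}) and the minimum-degree bound from \cite{JJdress} are in hand.
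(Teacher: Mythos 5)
Your proof is correct and follows essentially the same route as the paper: (\ref{ceq1}) from $r_d(G)=|E|-1$, (\ref{ceq2}) from the minimum-degree bound together with Lemma \ref{mindeg} for the equality case, and (\ref{ceq3}) by adding $\tfrac{2}{d-1}k_d(G)$ to (\ref{ceq1}) and invoking (\ref{ceq2}). Your version simply spells out the algebra that the paper leaves implicit.
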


\begin{proof}
(\ref{ceq1}) follows from the fact that for an ${\cal R}_d$-circuit $G$
we have $r_d(G)=|E|-1$. (\ref{ceq2}) follows from the lower bound on the minimum degree
and Lemma \ref{mindeg}.
(\ref{ceq3}) follows by adding $\frac2{d - 1} k_d (G)$ to both sides of (\ref{ceq1}) and
then
applying (\ref{ceq2}).
\end{proof}

Let ${\cal M}$ be a matroid on ground‐set $E$ and let $(C_1, C_2, \ldots, C_t)$ be a sequence of circuits of ${\cal M}$. 
Let $D_0 = \emptyset$, and $D_j = \bigcup_{i = 1}^j C_i$ for $1 \le j \le t$. 
The set $C_i - D_{i - 1}$ is called the {\it lobe} of the circuit $C_i$, and is denoted by $\tilde{C}_i$, for $1 \le i \le t$. 
Following \cite{CH} we say that $(C_1, C_2, \ldots, C_t)$ is a {\it partial ear-decomposition} of ${\cal M}$ if the following properties hold for all $2\leq i\leq t$:
\medskip

(E1')   $C_i - D_{i - 1} \ne \emptyset$ and $C_i \cap D_{i - 1} \ne \emptyset$,

(E2') no circuit $C_i'$ satisfying (E1') has $C_i' - D_{i - 1}$ properly contained in $C_i - D_{i - 1}$.
\medskip

\noindent
An {\it ear-decomposition} of ${\cal M}$ is a partial ear-decomposition with
$D_t = E$. 
We have:

\begin{lemma} \cite{CH}
Let $\M$ be a matroid. Then
${\cal M}$ has an ear-decomposition if and only if ${\cal M}$ is 
connected. Furthermore, if ${\cal M}$ is connected, then every partial ear-decomposition of ${\cal M}$ can be extended to an ear-decomposition.
\end{lemma}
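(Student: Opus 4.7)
The plan is to treat the two directions separately, with the nontrivial content concentrated in the statement about extending partial ear-decompositions.

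For the \emph{necessity} of connectivity, suppose $\mathcal{M}$ admits an ear-decomposition $(C_1,\ldots,C_t)$, so that $D_t = E$. I would argue that any two elements $e,f \in E$ are connected in $\mathcal{M}$ by induction on the indices $i,j$ with $e \in C_i$ and $f \in C_j$. Elements lying in a common circuit $C_k$ are connected by definition. For elements in different circuits, note that each $C_k$ with $k \ge 2$ satisfies $C_k \cap D_{k-1} \ne \emptyset$ by (E1'), which means $C_k$ shares an element with some earlier $C_\ell$; chaining these overlaps along the transitive closure of the ``common circuit'' relation yields connectivity of $e$ and $f$.

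For the \emph{extension} claim (which subsumes the sufficiency direction by starting from the empty partial ear-decomposition, followed by a single circuit $C_1$), suppose $(C_1,\ldots,C_i)$ is a partial ear-decomposition with $D_i \subsetneq E$. I would show that some circuit $C_{i+1}$ satisfying (E1') and (E2') exists, and then iterate. To exhibit a circuit satisfying (E1'), I pick any $f \in E \setminus D_i$ and any $e \in D_i$; by connectivity of $\mathcal{M}$ there is a circuit $C$ containing both $e$ and $f$, so $C \cap D_i \ne \emptyset$ and $C - D_i \ne \emptyset$. To secure (E2'), from the (nonempty, finite) family of circuits satisfying (E1'), I choose $C_{i+1}$ so that $C_{i+1} - D_i$ is minimal under set inclusion. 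This choice is possible because the family is finite, and it makes (E2') hold by construction. The process terminates when $D_t = E$, yielding a full ear-decomposition that extends the given partial one.

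The main step I expect to require care is verifying that the minimality is with respect to set containment (as stated in (E2')) rather than cardinality, and that a minimal element of a nonempty finite poset of subsets always exists; this is purely combinatorial but worth stating cleanly. The only other subtle point is the base case when the partial ear-decomposition is empty: here (E1') and (E2') impose no constraints on $C_1$, so any circuit works, and connectivity guarantees that $E$ is nonempty and contains at least one circuit (otherwise $\mathcal{M}$ would be free, hence disconnected unless $|E| \le 1$, a degenerate case that can be addressed separately).
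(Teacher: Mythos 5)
The paper states this lemma only as a citation to Coullard--Hellerstein \cite{CH} and supplies no proof of its own, so there is nothing internal to compare against; your argument is the standard one and is correct: chaining the overlaps forced by (E1') gives connectivity, and choosing a circuit through one old and one new element with inclusion-minimal lobe gives the extension step. The only points worth spelling out are that the chaining in the necessity direction relies on the ``common circuit'' relation being transitive (which the paper asserts when defining connected components), and that the degenerate case you flag (a single coloop, which is vacuously connected but has no circuit) is indeed the only exception and is implicitly excluded.
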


We next define a new, more general notion.
We say that a sequence  $(C_1, C_2, \ldots, C_t)$ of circuits is a {\it partial circuit decomposition} of ${\cal M}$ if 
the following properties hold for all $2 \le i \le t$:
\medskip
	
	(E1)
	$C_i - D_{i - 1} \ne \emptyset$,
	
	(E2) no circuit $C_i'$ satisfying (E1) has $C_i' - D_{i - 1}$ properly contained in $C_i - D_{i - 1}$.
\medskip

\noindent
A {\it circuit decomposition} of ${\cal M}$ is a partial circuit decomposition with
$D_t = E$. 

An element $e\in E$ is a {\it bridge} if $r(E-e)=r(E)-1$ holds.
We call ${\cal M}$ {\it bridgeless}, if it contains no bridges.
The following lemma is easy to prove, by using that $e$ is a bridge if and only if there is
no circuit $C$ in ${\cal M}$ with $e\in C$.

\begin{lemma}
Let $\M$ be a matroid. Then
${\cal M}$ has a circuit decomposition if and only if ${\cal M}$ is 
bridgeless. Furthermore, if ${\cal M}$ is bridgeless, then every partial circuit decomposition of ${\cal M}$ can be extended to a circuit decomposition.
\end{lemma}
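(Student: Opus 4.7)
The plan is to reduce both parts of the statement to a single extension claim: given any partial circuit decomposition $(C_1,\ldots,C_s)$ of $\M$ with $D_s \subsetneq E$, one can always produce a circuit $C_{s+1}$ so that $(C_1,\ldots,C_{s+1})$ is still a partial circuit decomposition. Combined with the trivial observation that the empty sequence is a partial circuit decomposition, this extension claim yields a circuit decomposition from scratch, and also establishes the ``extendability'' assertion. The ``only if'' direction is then essentially immediate: if $(C_1,\ldots,C_t)$ is a circuit decomposition, then $E = D_t = \bigcup_{i=1}^t C_i$, so every element of $E$ lies in some circuit and therefore, by the characterization of bridges noted in the excerpt (an element is a bridge iff it lies in no circuit), $\M$ has no bridge.

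For the extension claim, the key observation is that bridgelessness is exactly what is needed to make condition (E1) non-vacuous. Given the partial decomposition and some $e \in E - D_s$, I would invoke bridgelessness to find a circuit $C$ with $e \in C$; then $C - D_s \supseteq \{e\}$ is nonempty, so $C$ satisfies (E1) at step $s+1$. Consequently the family of circuits satisfying (E1) at step $s+1$ is nonempty, and since $E$ is finite, this family contains a member $C_{s+1}$ whose ``lobe'' $C_{s+1} - D_s$ is inclusion-minimal among all such lobes. By the minimality of this choice, $C_{s+1}$ satisfies (E2) as well, completing a single extension step.

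To conclude, iteration delivers the full decomposition: each application of the extension step strictly enlarges $D_i$ (because $C_{i+1} - D_i \neq \emptyset$), and since $E$ is finite the process terminates at some index $t$ with $D_t = E$.

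I do not foresee any real obstacle: the argument is a direct greedy construction, and the sole place where bridgelessness is used is to guarantee the non-emptiness of the (E1)-family at each step, which in turn is what makes the minimality selection for (E2) possible.
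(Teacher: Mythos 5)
Your proof is correct and follows exactly the route the paper intends: the paper gives no written proof but notes the lemma follows easily from the characterization that $e$ is a bridge iff no circuit contains $e$, and your greedy extension argument (use bridgelessness to make the (E1)-family nonempty, then pick an inclusion-minimal lobe to secure (E2)) is the natural fleshing-out of that hint. No gaps.
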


Note that every ear-decomposition is a circuit decomposition.
The following lemma extends a basic property of ear-decompositions to
circuit decompositions.

\begin{lemma} \label{lmm: lobe rank}
Let $(C_1,C_2,\dots,C_t)$ be a circuit decomposition of $\M$.
Then $$r(D_j) - r(D_{j - 1}) = |\tilde{C}_j| - 1$$
for all $1\leq j\leq t$.
\end{lemma}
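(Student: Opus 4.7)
My plan is to prove the equality by working in the contraction matroid $\mathcal{M} / D_{j-1}$ and showing that $\tilde{C}_j$ is a circuit of this contraction. The base case $j=1$ is immediate: $D_0 = \emptyset$ and $\tilde{C}_1 = C_1$, so $r(D_1) - r(D_0) = r(C_1) = |C_1| - 1 = |\tilde{C}_1| - 1$, since $C_1$ is a circuit of $\mathcal{M}$.

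For $j \geq 2$, the rank axioms of contraction give the identity
\[
r(D_j) - r(D_{j-1}) \;=\; r_{\mathcal{M}/D_{j-1}}\!\left(\tilde{C}_j\right).
\]
So it suffices to prove $r_{\mathcal{M}/D_{j-1}}(\tilde{C}_j) = |\tilde{C}_j| - 1$, and the natural way to get this is to show that $\tilde{C}_j$ is itself a circuit of $\mathcal{M}/D_{j-1}$. Here I would invoke the standard characterization (see Oxley) that the circuits of $\mathcal{M}/A$ are precisely the minimal nonempty sets of the form $C - A$, where $C$ is a circuit of $\mathcal{M}$.

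Applying this characterization with $A = D_{j-1}$, conditions (E1) and (E2) of a circuit decomposition translate exactly into the two defining properties: (E1) says the candidate set $\tilde{C}_j = C_j - D_{j-1}$ is a nonempty set of the required form, and (E2) says this set is minimal among all such sets coming from a circuit of $\mathcal{M}$. Hence $\tilde{C}_j$ is a circuit of $\mathcal{M}/D_{j-1}$, so its rank in that matroid is $|\tilde{C}_j| - 1$, completing the proof.

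The main obstacle is essentially just bookkeeping: one must cleanly identify that (E1)+(E2), stripped of their geometric motivation, reduce precisely to the standard description of circuits in a contracted matroid. There is no hard combinatorial content beyond this identification, but some care is warranted to phrase the minimality condition so that it matches the contraction characterization without any off-by-one or nonemptiness slip.
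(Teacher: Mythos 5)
Your proof is correct. The rank identity $r(D_j)-r(D_{j-1})=r_{\mathcal{M}/D_{j-1}}(\tilde{C}_j)$ is the standard contraction formula, and conditions (E1) and (E2) do translate verbatim into the hypotheses of Oxley's characterization of the circuits of $\mathcal{M}/D_{j-1}$ as the minimal nonempty sets $C-D_{j-1}$ with $C$ a circuit of $\mathcal{M}$; note that this also covers the degenerate case $|\tilde{C}_j|=1$, where $\tilde{C}_j$ is a loop of the contraction. The paper proves the same lemma from first principles without mentioning contractions: it fixes $e\in\tilde{C}_j$ and uses (E2) to show that every $f\in\tilde{C}_j-e$ is a bridge of $D_j-e$ (any circuit through $f$ inside $D_j-e$ would have a lobe properly contained in $\tilde{C}_j$), which gives $r(D_j)\geq r(D_{j-1})+|\tilde{C}_j|-1$, and then observes the reverse inequality since $e$ lies in the closure of $C_j-e$. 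The two arguments have identical combinatorial content --- (E2) is what forbids smaller circuits meeting the lobe in both cases --- but yours packages it into a citable matroid fact, which is arguably cleaner, at the cost of relying on that external characterization; the paper's version is self-contained and avoids introducing minors.
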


\begin{proof}
Let $J=\tilde{C}_j$. The lemma is obvious for $|J|=1$, so we may assume that
$|J|\geq 2$. Let $e\in J$ and let $f\in J-e$. Now $f$ is a bridge in $D_{j}-e$, for otherwise
it is contained by a circuit $C'$ with $C' - D_{j - 1} \ne \emptyset$ and
$(C'-D_{j-1})\subsetneq (C_j-D_{j-1})$, contradicting (E2).
Hence $r(D_j)\geq r(D_{j - 1}) + |J| - 1$. As the inequality cannot be strict, the lemma follows.
\end{proof}

\section{Circuit decompositions of frameworks}
\label{sec:6}

In this section we consider cable-strut frameworks $(T,p)$.
Thus, unless stated otherwise, the underlying tensegrity
graph $T=(V,C\cup S)$ contains no bars and has no parallel members.

Let $(T,p)$ be a 
realization of the tensegrity graph $T=(V,C\cup S)$ in ${\R}^d$ and let
$F\subseteq C\cup S$ be a (possibly empty) subset of its members.
We say that a dependence $\omega$ is a {\it semi-stress of $(T,p)$
with respect to} $F$ if $\supp \omega - F\not= \emptyset$ and
$\omega$ satisfies the sign constraints on $\supp \omega - F$.

Let $\omega$ be a semi-stress with respect to $F$ for which $\supp \omega - F$ is minimal,
and with respect to this, $\supp \omega \cap F$ is minimal.
We say that $\omega$ is a {\it minimal semi-stress} with respect to $F$.
Note that a (minimal) semi-stress with
respect to $F$
exists if $(T,p)$ has a proper stress and $F$ is a proper subset of $C\cup S$.

\begin{lemma}
\label{conn}
Let $(T,p)$ be a tensegrity framework with $T=(V,C\cup S)$, $F\subsetneq C\cup S$, and 
let $f\in (C\cup S)-F$.
Suppose that $(T,p)$ has a proper stress.
Then there exists a minimal semi-stress $\omega_f$ of $(T,p)$ with respect to $F$
with $f\in \supp \omega_f $.
\end{lemma}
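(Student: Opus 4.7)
The plan is to exhibit $\omega_f$ as an inclusion-minimal semi-stress inside the family of semi-stresses whose support contains $f$, and then to promote this local minimality to the global minimality demanded by the definition. Let $\omega_0$ denote a proper stress of $(T,p)$, which exists by hypothesis. Because $\omega_0$ has strict (and correct) sign at every member, it belongs to the family
$$
\Omega_f := \{\omega\ :\ \omega \ \text{is a semi-stress of}\ (T,p)\ \text{w.r.t.}\ F,\ f\in \supp\omega\},
$$
so $\Omega_f \ne \emptyset$. Since the member set of $T$ is finite, I can choose $\omega_f \in \Omega_f$ so that $\supp\omega_f - F$ is inclusion-minimal inside $\{\supp\omega - F : \omega\in\Omega_f\}$, and, subject to this, $\supp\omega_f\cap F$ is inclusion-minimal.

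To see that $\omega_f$ is a minimal semi-stress in the global sense, I would argue by contradiction. Suppose some semi-stress $\omega'$ violates the minimality of $\omega_f$. If $\supp\omega' - F = \supp\omega_f - F$ and $\supp\omega'\cap F \subsetneq \supp\omega_f\cap F$, then $f \in \supp\omega_f - F = \supp\omega' - F$, so $\omega'\in\Omega_f$, contradicting the second-level choice. Analogously, if $\supp\omega' - F\subsetneq \supp\omega_f - F$ and $f\in\supp\omega'$, then $\omega' \in \Omega_f$ contradicts the first-level choice. The only remaining case is $\supp\omega' - F\subsetneq \supp\omega_f - F$ together with $\omega'(f)=0$; for this I would perturb $\omega_f$ using $\omega'$.

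Concretely, I would set
$$
s^* := \min\left\{\frac{\omega_f(e)}{\omega'(e)}\ :\ e\in \supp\omega' - F\right\}.
$$
For each $e$ in this set, both $\omega_f(e)$ and $\omega'(e)$ are nonzero and carry the same strict sign (negative on cables, positive on struts), so each ratio is strictly positive and $s^*>0$. A short case analysis shows that $s^*$ is the largest scalar for which $\omega_f - s^*\omega'$ respects every sign constraint on $(C\cup S)-F$: for $e\in\supp\omega' - F$ the entry $\omega_f(e) - s\omega'(e)$ either increases from a negative value towards $0$ (cables) or decreases from a positive value towards $0$ (struts), and for $e \in (C\cup S) - F$ outside $\supp\omega'$ the combination equals $\omega_f(e)$ and so keeps the sign of $\omega_f$. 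At $s=s^*$ some $e \in \supp\omega' - F$ becomes zero, while each edge of $(\supp\omega_f - \supp\omega') - F$ (nonempty by the strict inclusion, and containing $f$ by the assumption $\omega'(f)=0$) retains its nonzero value. Hence $\omega_f - s^*\omega'$ is a semi-stress in $\Omega_f$ whose $\supp - F$ is strictly contained in $\supp\omega_f - F$, contradicting the first-level choice and finishing the proof.

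The main obstacle is the routine but attention-demanding sign check on $\omega_f - s^*\omega'$: one must separate cables from struts, and keep track of which members were already in $\supp\omega_f$; once this bookkeeping is in place, the strict inclusion $\supp\omega' - F \subsetneq \supp\omega_f - F$ automatically produces a witness showing that $\supp(\omega_f - s^*\omega') - F$ is nonempty, which is what makes the perturbation an honest semi-stress and not merely a dependence.
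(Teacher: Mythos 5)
Your proposal is correct and follows essentially the same route as the paper: choose $\omega_f$ minimizing $\supp\omega-F$ (then $\supp\omega\cap F$) among semi-stresses containing $f$, and if global minimality fails via some $\omega'$ with $f\notin\supp\omega'$, subtract $t\,\omega'$ with $t=\min\{\omega_f(e)/\omega'(e): e\in\supp\omega'-F\}>0$ to strictly shrink the support while retaining $f$. The only cosmetic difference is that you spell out the case analysis and the sign check that the paper leaves implicit.
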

 
\begin{proof}
Let $\omega$ be a semi-stress of $(T,p)$ with respect to $F$
with $f\in \supp \omega$, for which $\supp \omega-F$ is minimal, and with respect to this, $\supp \omega\cap F$ is minimal. Suppose that $\omega$ is not a minimal semi-stress of $(T,p)$ with respect to $F$. Then there exists a minimal semi-stress $\omega'$ of $(T,p)$ with respect to $F$ such that $f\notin \supp \omega'$ and $\supp \omega' -F \subsetneq \supp \omega -F$.
Let 
$$t=\min \left\{ \frac{\omega(e)}{\omega'(e)} : e\in \supp(\omega')-F\right\}.$$
Then $t>0$ and $\mu=\omega - t\omega'$ is a semi-stress of $(T,p)$ with respect to $F$, for which $f\in \supp \mu$
and $\supp \mu -F \subsetneq  \supp \omega -F$, contradicting the
minimality of $\omega$.
\end{proof}

\begin{lemma}
\label{dep}
Let $(T,p)$ be a generic $d$-dimensional realization of the tensegrity graph
$T=(V,C\cup S)$ and let $F\subsetneq C\cup S$.
Suppose that 
$\omega$ is a minimal semi-stress with respect to $F$. Then:\\
(a) if
$C$ is an ${\cal R}_d$-circuit in $\overline T$ with $C\subseteq F\cup \supp \omega$
and $C-F\not= \emptyset$, and
$\lambda_C$ is a dependence with $\supp \lambda_C = C$, then $\lambda_C$ or $-\lambda_C$ is a semi-stress with respect to $F$ such that
$\supp \lambda_C- F = \supp \omega - F$, \\
(b) $\supp \omega$ is an ${\cal R}_d$-circuit.
\end{lemma}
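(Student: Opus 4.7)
The plan is to derive both parts of the lemma from the two-tiered minimality of $\omega$ — first $|\supp\omega - F|$ is minimal, then among semi-stresses achieving this minimum $|\supp\omega \cap F|$ is minimal — using the standard perturbation $\omega' := \omega + t\lambda$, where $\lambda$ is a carefully chosen auxiliary dependence and $t \in \R$ is selected to kill a specified coefficient while preserving the sign constraints on the portion of the support outside $F$. Note that the hypothesis $C \subseteq F \cup \supp\omega$ in (a) automatically gives $C - F \subseteq \supp\omega - F$.

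For (a) I would split on whether the signs of $\lambda_C$ on $C - F$ match a consistent cable/strut pattern. If yes, then after replacing $\lambda_C$ by $-\lambda_C$ if necessary, $\lambda_C$ is itself a semi-stress with respect to $F$ whose support outside $F$ equals $C - F$; the first-tier minimality of $\omega$ then forces $C - F = \supp\omega - F$, which is the conclusion. If no, then writing $\sigma(e) = -1$ for cables and $+1$ for struts, I pick witnesses $e^{+}, e^{-} \in C - F$ at which $\sigma\cdot\lambda_C$ is positive, resp.\ negative, set
$$\omega' = \omega - t^*\lambda_C, \qquad t^* = \min\{ \omega(e)/\lambda_C(e) : e \in C - F,\ \sigma(e)\lambda_C(e) > 0 \} > 0,$$
and verify that the minimizing edge drops out of $\supp\omega'$; the sign constraints survive on $\supp\omega' - F$ (trivially off $C$, enforced by the threshold $t^*$ at the right-sign edges of $C - F$, and only reinforced at the wrong-sign edges where subtracting $t^*\lambda_C$ pushes the coefficient further in the correct direction); and $e^{-} \in \supp\omega'$ certifies $\supp\omega' - F \neq \emptyset$. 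Thus $\omega'$ is a semi-stress with $\supp\omega' - F \subsetneq \supp\omega - F$, contradicting the first tier of minimality.

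For (b) I assume for contradiction that $\supp\omega$ is not an $\mathcal{R}_d$-circuit; being dependent, it properly contains some $\mathcal{R}_d$-circuit $C'$ carrying a dependence $\lambda_{C'}$ with $\supp\lambda_{C'} = C'$. If $C' \subseteq F$, then $\omega' := \omega - (\omega(e_0)/\lambda_{C'}(e_0))\lambda_{C'}$ for any $e_0 \in C' \subseteq \supp\omega \cap F$ eliminates $e_0$ without touching $\supp\omega - F$ or any sign constraint there (since $\lambda_{C'}$ vanishes outside $F$), contradicting the second tier of minimality. Otherwise $C' - F \neq \emptyset$, so part (a) applies and delivers that $\pm\lambda_{C'}$ is a semi-stress with respect to $F$ satisfying $\supp\lambda_{C'} - F = C' - F = \supp\omega - F$; since $C' \subsetneq \supp\omega$, the remaining discrepancy must lie in $F$, so $C' \cap F \subsetneq \supp\omega \cap F$, again contradicting the second tier of minimality.

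The main obstacle is the sign-bookkeeping in the mixed-sign case of (a) — showing that a single scalar $t^* > 0$ simultaneously zeros one coefficient, preserves every cable/strut constraint on $\supp\omega' - F$, and keeps the perturbed dependence non-trivial outside $F$. Once (a) is secured, the reduction in (b) is quick, but the auxiliary case $C' \subseteq F$ has to be handled separately because (a) itself does not cover it.
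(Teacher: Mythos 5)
Your proposal is correct and follows essentially the same route as the paper: the same case split in (a) on whether $\lambda_C$ has a consistent sign pattern on $C-F$, the same perturbation $\omega - t\lambda_C$ with $t$ the minimum of $\omega(e)/\lambda_C(e)$ over the matching-sign edges, and the same reduction of (b) to (a) via the two-tiered minimality. Your explicit treatment in (b) of a circuit $C'\subseteq F$ (and of the second-tier minimality argument) only fills in details the paper leaves implicit.
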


\begin{proof}
Let $Z=\supp \omega$.
Since $(T,p)$ is generic, $\lambda_C$ is unique up to multiplication with non-zero scalars. 
By replacing $\lambda_C$ with $-\lambda_C$, if necessary,    
we may assume that the sign of $\lambda_C$
agrees with the sign of $\omega$ on at least one member in $C-F$.

If the sign pattern of $\lambda_C$ agrees with the sign pattern of $\omega$ on all members of $C-F$, then,
since $\omega$ is a semi-stress with respect to $F$, $\lambda_C$ is also a
semi-stress with respect to $F$. Moreover, the minimality of $\omega$ implies
$\supp \lambda_C- F = Z - F$.

So we may suppose that the two sign patterns do not agree everywhere in $C-F$. Let

$$t = \min \left\{ \frac{\omega(e)}{\lambda_C(e)} : e\in C-F,\, \sign(\lambda_C(e))= \sign(\omega(e))\right\}.$$

\noindent
Then $t>0$ and $\mu = \omega - t \lambda_C$ is a semi-stress with respect to $F$ for which
$\supp \mu - F$ is a non-empty proper subset of $Z-F$. 
This contradicts the minimality of $\omega$. Thus (a) follows.
We obtain (b) by applying (a) to an ${\cal R}_d$-circuit $C$ in $\overline T$, 
for which $C\subseteq Z$ and
$C-F\not= \emptyset$ hold.
\end{proof}

The main result of this section is as follows.
It shows that a generic, properly stressed tensegrity framework has
a ``geometric'' circuit decomposition.

\begin{theorem}
\label{geoear}
Let $(T,p)$ be a generic $d$-dimensional cable-strut framework with
$T=(V,C\cup S)$.
Suppose that $(T,p)$ has a proper stress and let $G=\overline T$. Then 
there exists a circuit decomposition $(C_1,C_2,\dots, C_t)$ of $G$
for which the subframework of $(T,p)$ on $D_j$ has a proper stress for all
$1\leq j\leq t$.
\end{theorem}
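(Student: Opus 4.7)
The plan is to build the sequence $(C_1,\dots,C_t)$ greedily by induction on $i$, maintaining the side invariant that the subframework $(T|_{D_j},p)$ admits a proper stress $\omega^{(j)}$ for every $j\ge 1$. Lemmas~\ref{conn} and~\ref{dep} are precisely the tools needed, since together they produce, for any chosen new member $f\in(C\cup S)-D_{j-1}$, a minimal semi-stress with respect to $D_{j-1}$ whose support is an $\mathcal{R}_d$-circuit of $G$.

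For the inductive step, assume $D_{i-1}\subsetneq C\cup S$ with $\omega^{(i-1)}$ already constructed. Pick any $f\in(C\cup S)-D_{i-1}$, set $F=D_{i-1}$, and apply Lemma~\ref{conn} (which is available because $(T,p)$ itself has a proper stress by hypothesis) to obtain a minimal semi-stress $\omega_i$ with respect to $F$ with $f\in\supp\omega_i$. Then $C_i:=\supp\omega_i$ is an $\mathcal{R}_d$-circuit by Lemma~\ref{dep}(b). Condition (E1) is immediate from $f\in C_i-D_{i-1}$. For (E2), suppose some $\mathcal{R}_d$-circuit $C'$ satisfied $\emptyset\neq C'-D_{i-1}\subsetneq C_i-D_{i-1}$; then $C'\subseteq D_{i-1}\cup C_i=F\cup\supp\omega_i$, so Lemma~\ref{dep}(a) applied to $C'$ would yield a dependence $\lambda_{C'}$ with $\supp\lambda_{C'}=C'$ such that $\supp\lambda_{C'}-F=\supp\omega_i-F$, i.e.\ $C'-D_{i-1}=C_i-D_{i-1}$, contradicting the strict containment.

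To preserve the invariant, I set $\omega^{(i)}:=\omega_i+t\,\omega^{(i-1)}$ for a sufficiently large $t>0$. The semi-stress $\omega_i$ already has strictly correct signs on $C_i-D_{i-1}$ (every edge in $\supp\omega_i-F$ receives a nonzero value of the required sign), but can be wildly behaved on $D_{i-1}$. On $C_i-D_{i-1}$ the sum equals $\omega_i$ (since $\omega^{(i-1)}$ is supported in $D_{i-1}$) and thus has the correct strict sign; on $D_{i-1}$ the strictly correct nowhere-zero signs of $\omega^{(i-1)}$ dominate once $t$ is large enough, making $\omega^{(i)}$ a proper stress of $(T|_{D_i},p)$. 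The base case $i=1$ is easier: with $F=\emptyset$ the minimal semi-stress $\omega_1$ has strict correct signs on its entire support $C_1$ and so is itself a proper stress of $(T|_{C_1},p)$. The construction terminates when $D_t=C\cup S$, since each $D_j$ strictly enlarges its predecessor.

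The main obstacle is the invariant-preservation step: the semi-stress $\omega_i$ supplied by Lemma~\ref{conn} may carry arbitrary signs on $D_{i-1}$, and the argument only goes through because a \emph{proper} stress is, by definition, strictly nonzero of the correct sign on every edge of its framework, enabling a large positive multiple of $\omega^{(i-1)}$ to absorb the offending signs of $\omega_i$. By contrast, the matroidal conditions (E1) and (E2) drop out cleanly once $C_i$ is defined as $\supp\omega_i$.
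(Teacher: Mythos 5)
Your proposal is correct and follows essentially the same route as the paper: build the decomposition greedily, take a minimal semi-stress with respect to $D_{i-1}$ whose support is an $\mathcal{R}_d$-circuit via Lemmas~\ref{conn} and~\ref{dep}, and combine it with the inductively obtained proper stress on $D_{i-1}$ (your large-$t$ combination is the paper's small-$\varepsilon$ combination after rescaling). Your explicit verification of (E2) via Lemma~\ref{dep}(a) spells out what the paper leaves implicit, but the argument is the same.
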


\begin{proof}
Since $(T,p)$ has a proper stress, ${\cal R}_d(G)$ is bridgeless. 
We shall prove the existence of the required circuit decomposition by
induction on the number of circuits.
Suppose that $(C_1,C_2,\dots, C_j)$ is a
partial circuit decomposition of $G$ with $D_j\not= E$ for which the subframework of $(T,p)$ on $D_i$ has a proper stress for all
$1\leq i\leq j$.
Let $\omega$ be a minimal semi-stress with
respect to $D_j$ with $\supp \omega = C$. 
By Lemma  \ref{dep} $C$ is an ${\cal R}_d$-circuit, which 
satisfies (E1) and (E2). Hence 
$(C_1,C_2,\dots, C_j,C)$ is a partial
circuit decomposition of $G$.
Suppose that $\omega'$ is a proper stress on $D_j$.
Then $\omega' + \varepsilon \omega$ 
is a proper stress on $D_j\cup C$, for 
a sufficiently small positive $\varepsilon$.
It follows that the circuit decomposition can be extended,
which completes the proof.
\end{proof}

We call the decomposition of Theorem \ref{geoear}  a {\it properly stressed circuit decomposition} of $(T,p)$.

We next show a key property of the properly stressed circuit decompositions
of minimally properly stressed frameworks.

\begin{theorem}
\label{min}
Suppose that $(T,p)$ is a minimally properly stressed generic cable-strut framework in ${\mathbb R}^d$ and let 
$(C_1,C_2,\dots, C_t)$ be a properly stressed circuit decomposition of $(T,p)$.
Then the subframework on
$D_j$ is minimally properly stressed for all $1\leq j\leq t$.
\end{theorem}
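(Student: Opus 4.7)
The plan is to argue by contradiction and lift a hypothetical failure at some intermediate level all the way to the top of the decomposition. Suppose that $(D_j,p)$ is not minimally properly stressed for some $1\le j\le t$. The properly stressed circuit decomposition guarantees that $(D_j,p)$ has a proper stress, so failure of minimality means there exists a member $e\in D_j$ such that $(D_j-e,p)$ also has a proper stress; call it $\omega'$. I will build, by induction on $i$ from $j$ up to $t$, a proper stress $\omega^{(i)}$ on $(D_i-e,p)$, starting from $\omega^{(j)}=\omega'$. At $i=t$ this yields a proper stress on $(T-e,p)$, contradicting the hypothesis that $(T,p)$ is minimally properly stressed.

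For the inductive step, assume $\omega^{(i)}$ is a proper stress on $(D_i-e,p)$ with $i<t$. The properly stressed circuit decomposition supplies a proper stress $\omega_i$ on $(D_i,p)$ and a proper stress $\tilde\omega$ on $(D_{i+1},p)$. I combine all three as
\[
\omega^{(i+1)} \;=\; \tilde\omega \;+\; \alpha\,\omega_i \;+\; \beta\,\omega^{(i)},
\]
with $\omega_i$ and $\omega^{(i)}$ extended by zero to $D_{i+1}$. The scalar $\alpha=-\tilde\omega(e)/\omega_i(e)$ is well defined (since $e\in D_i$ and $\omega_i(e)\ne 0$) and is strictly negative, because $\tilde\omega(e)$ and $\omega_i(e)$ share the common sign dictated by the proper stress condition at $e$. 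This choice forces $\omega^{(i+1)}(e)=0$, and $\omega^{(i+1)}$ is automatically a dependence since each summand is.

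The main obstacle is verifying the strict sign constraints for $\omega^{(i+1)}$ on $D_{i+1}-e$. On the new lobe $\tilde C_{i+1}=D_{i+1}-D_i$ the two extended summands vanish, so $\omega^{(i+1)}=\tilde\omega$ carries the correct sign. On $D_i-e$ the difficulty is that $\alpha<0$, so $\alpha\,\omega_i$ has the \emph{wrong} sign at every member of $D_i$; however, $\beta\,\omega^{(i)}$ has the correct sign at every member of $D_i-e$ (since $\omega^{(i)}$ is a proper stress there) and scales linearly with $\beta$, while the remaining terms $\tilde\omega+\alpha\,\omega_i$ are fixed and bounded. Taking $\beta>0$ sufficiently large forces $\beta\,\omega^{(i)}$ to dominate at each of the finitely many members of $D_i-e$, restoring the correct sign and completing the induction.
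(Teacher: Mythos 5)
Your proof is correct, but it takes a genuinely different route from the paper's. The paper reduces to the top level $j=t-1$, and in the hard case $e\in C_t\cap D_{t-1}$ it invokes the strong circuit exchange axiom to produce a circuit $C''$ that contains the lobe $C_t-D_{t-1}$ but avoids $e$, and then needs Lemma~\ref{dep} (which rests on genericity) to guarantee that a dependence supported on $C''$ is a semi-stress with respect to $D_{t-1}$; a small multiple of that semi-stress is added to the proper stress of $D_{t-1}-e$. You bypass all of this with a purely linear argument: the combination $\tilde\omega+\alpha\omega_i+\beta\omega^{(i)}$ is a dependence, vanishes at $e$ by the choice of $\alpha$, inherits the strict signs of $\tilde\omega$ on the new lobe (where the other two summands vanish), and inherits the strict signs of $\omega^{(i)}$ on $D_i-e$ once $\beta$ is large. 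All the ingredients you use ($\omega_i(e)\neq 0$, $\tilde\omega(e)$ and $\omega_i(e)$ of equal sign, extendability of stresses by zero) are immediate from the definitions. What your argument buys is considerable: it uses neither genericity nor properties (E1)--(E2) of the decomposition, only that each $D_i$ carries a proper stress, so it actually proves the stronger statement that in a minimally properly stressed framework \emph{every} properly stressed subset of members induces a minimally properly stressed subframework (and in particular your induction could jump directly from $D_j$ to $D_t$ in one step). The paper's route is more in line with the semi-stress/circuit machinery it develops for Theorems~\ref{geoear} and \ref{bound}, but for this particular statement your argument is simpler and more general.
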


\begin{proof}
We may assume that $t\geq 2$, since the subframework on $C_1$ is clearly minimally properly stressed. We may also assume that
$j=t-1$.
For a contradiction 
suppose that the subframework $(T',p')$ on $D_{t-1}-e$ is properly  stressed 
for some $e\in D_{t-1}$. If $e\notin C_t$, then
a proper stress of $(T',p')$ can be extended to a proper stress of $(T-e,p)$ (as in 
the proof of 
Theorem \ref{geoear}),
which contradicts the minimality of $(T,p)$. So we may assume that $e\in C_t$.

Let $C'$ be a circuit in $D_{t-1}$ with $e\in C'$ and let $f\in C_t-D_{t-1}$.
By the strong circuit exchange axiom there is a circuit $C''$ with $f\in C''$,
$e\notin C''$, and $C''\subset C'\cup C_t$.
By (E2) we must have
$C_t-D_{t-1}\subseteq C''$. It follows from Lemma \ref{dep}
that a dependence $\lambda$  of $C''$ is a semi-stress with respect to $D_{t-1}$.
Thus by adding an appropriate multiple of $\lambda$ to a proper stress of $(T',p')$ we obtain a proper
stress of $(T-e,p)$, contradicting minimality.
\end{proof}

\section{Upper bounds for generic cable-strut frameworks}
\label{sec:7}

In this section we prove the tight upper bound on the
size of a $d$-dimensional generic minimally infinitesimally rigid cable-strut framework,
for $1\leq d\leq 3$.
Since the proofs are very similar for these values of $d$, we prove the bound
for $d=3$ only. This way we can avoid a case analysis and the proof becomes more
transparent.
In fact we prove a more general result, which takes into account the 
degrees of freedom of the underlying graph.

\begin{theorem} \label{bound}
	Let $(T,p)$ be a $3$-dimensional generic cable-strut framework with $T = (V, C \cup S)$. Suppose that $(T,p)$ is minimally properly stressed.
 Then 
 $$|C \cup S| \le 4 |V| - 10 - 2 k_3 (T).$$
\end{theorem}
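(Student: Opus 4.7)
The plan is to induct on the number of circuits in a properly stressed circuit decomposition of $(T,p)$ provided by Theorem~\ref{geoear}, using Theorem~\ref{min} to propagate minimality and Lemma~\ref{circuits} to control individual circuits.

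First, I would apply Theorem~\ref{geoear} to obtain a properly stressed circuit decomposition $(C_1,\dots,C_t)$ of $\overline T$. By Theorem~\ref{min}, every restriction $(T|_{D_j},p)$ is itself a generic minimally properly stressed cable-strut framework, so an induction on $j$ is legitimate. I would prove by induction on $j$ the local statement
\[
|D_j|\le 4|V(D_j)|-10-2k_3(T|_{D_j}) \qquad (1\le j\le t),
\]
from which the theorem is the case $j=t$. The base case $j=1$ is immediate from inequality~(\ref{ceq3}) of Lemma~\ref{circuits}, since $D_1=C_1$ is an ${\cal R}_3$-circuit; after substituting the circuit identity $k_3(C_1)=3|V(C_1)|-5-|C_1|$ it is equivalent to the minimum-degree bound $|C_1|\ge 2|V(C_1)|$ from Lemma~\ref{mindeg}.

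For the inductive step at $j\ge 2$, set $v_j:=|V(D_j)|-|V(D_{j-1})|$, the number of newly added vertices. By Lemma~\ref{lmm: lobe rank} the rank of the underlying graph grows by $|\tilde C_j|-1$, so
\[
k_3(T|_{D_j})-k_3(T|_{D_{j-1}})=3v_j-|\tilde C_j|+1,
\]
and a short computation using the inductive hypothesis reduces the bound at $j$ to the single lobe inequality
\[
|\tilde C_j|\ge 2v_j+2.
\]

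Establishing this last inequality is the main obstacle. The minimum-degree bound $\delta(C_j)\ge 4$ yields $|C_j|\ge 2|V(C_j)|$, which, setting $u_j:=|V(C_j)\cap V(D_{j-1})|$, translates to $|\tilde C_j|\ge 2v_j+2u_j-|C_j\cap D_{j-1}|$; the task therefore becomes proving the ``interface sparsity'' claim $|C_j\cap D_{j-1}|\le 2u_j-2$. I expect to obtain this by combining the minimality property~(E2) of the lobe with the strong circuit exchange axiom in ${\cal R}_3(\overline T)$, together with the genericity of $(T,p)$ and the cable/strut sign pattern of the proper stress: if $|C_j\cap D_{j-1}|>2u_j-2$ one should be able to find an ${\cal R}_3$-circuit $C'$ in $D_{j-1}$ sharing an edge with $C_j$ and then exchange it to produce a rival circuit $C_j'$ satisfying~(E1) whose lobe is strictly contained in $\tilde C_j$, contradicting~(E2). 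The extremal case $C_j\cong K_5$ flagged by Lemma~\ref{mindeg}, where equality in the minimum-degree bound is attained, will need to be handled by a separate direct argument.
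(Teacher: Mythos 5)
Your setup is exactly the paper's: a properly stressed circuit decomposition from Theorem~\ref{geoear}, minimality of each subframework on $D_j$ from Theorem~\ref{min}, Lemma~\ref{lmm: lobe rank} to track $k_3$, the base case from~(\ref{ceq3}), and a correct reduction of the inductive step to the single lobe inequality $|\tilde C_j|\ge 2v_j+2$. Up to that point everything checks out.

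The gap is the last step, which you yourself flag as the main obstacle and leave as a plan rather than a proof. Your route --- global degree counting on $C_j$ plus the ``interface sparsity'' claim $|C_j\cap D_{j-1}|\le 2u_j-2$ --- relies on a claim that is both stronger than what is needed and not forced by the hypotheses. Already when $v_j=0$ it asserts $|\tilde C_j|\ge |C_j|-2u_j+2$, which for a rigid ${\cal R}_3$-circuit $C_j$ with $|C_j|=3u_j-5$ demands a lobe of size $u_j-3$; nothing in (E2), circuit exchange, or genericity yields that, and only $|\tilde C_j|\ge 2$ is true or needed there. The actual argument splits into two cases with different tools. If $V^+:=V(C_j)-V(D_{j-1})=\emptyset$, then $|\tilde C_j|\ge 2$ follows from minimal proper stressedness: a lobe consisting of a single edge $e$ would mean $D_j-e=D_{j-1}$ already carries a proper stress, contradicting Theorem~\ref{min}. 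If $v_j\ge 1$, one uses the $4$-edge-connectivity of ${\cal R}_3$-circuits applied to the cut of $C_j$ between $V^+$ and $V^-:=V(C_j)\cap V(D_{j-1})$: every edge of $C_j$ incident to a new vertex lies in the lobe, so writing $a$ for lobe edges inside $V^+$ and $b$ for lobe edges across the cut, the degree bound gives $2a+b\ge 4v_j$ and the edge-connectivity (using $V^-\ne\emptyset$) gives $b\ge 4$, whence $|\tilde C_j|\ge a+b\ge 2v_j+2$. No bound on $|C_j\cap D_{j-1}|$ is required, and no separate treatment of $K_5$ is needed. As written, your proposal does not contain a proof of the key inequality, and the intermediate claim you propose to establish it is the wrong target.
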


\begin{proof}
Let $(C_1,C_2, \ldots, C_t)$ be a properly stressed circuit decomposition of $(T,p)$. 
Let $T' = (V',E')$ be the subgraph of $T$ induced by the edges of $D_{t - 1}$,
let $E^+ = C_t - D_{t-1}$, $V^+ = V(C_t) - V(D_{t-1})$, and $V^- = V(C_t)\cap V(D_{t-1})$.
We may assume that $V^-\not= \emptyset$ by Lemma \ref{conn}.
We prove the inequality by induction on $t$.
For $t=1$, $E=C_1$ is a circuit, and the bound follows from
%
(\ref{ceq3}).

 By Lemma \ref{lmm: lobe rank}, we have 
 \begin{equation}
 \label{eq1}
 |E^+| = 3 |V^+| + 1 + k_3 (T') - k_3 (T).
 \end{equation}

 \noindent
 First suppose $|V^+|\geq 1$. The 4-edge-connectivity of $C_t$, and the fact that
 $V^-\not= \emptyset$ implies that $|E^+|\geq 2(|V^+|+1)$. Combining this with (\ref{eq1}) yields
 $1\leq |V^+|+k_3(T')-k_3(T)$. By plugging this into the RHS of (\ref{eq1}) we get
 $|E^+|\leq 4|V^+|+2k_3(T')-2k_3(T)$.
The induction hypothesis now gives
$$|E|=|E(T')|+|E^+|\leq 4|V(T')|-10-2k_3(T')+4|V^+|+2k_3(T')-2k_3(T)=4|V|-10-2k_3(T),$$
as required.

 Next suppose $V^+=\emptyset$.
By minimality, we have $|E^+|\geq 2$, which gives $k_3(T')>k_3(T)$ 
and $|E^+|\leq 2k_3(T')-2k_3(T)$
by \eqref{eq1}. Hence
$$|E|=|E(T')|+|E^+|\leq 4|V(T)|-10-2k_3(T')+2k_3(T')-2k_3(T)=4|V|-10-2k_3(T).$$
This completes the proof.
\end{proof}

By Lemma \ref{minps} and Theorem \ref{bound}
it follows that a $3$-dimensional minimally infinitesimally rigid generic  cable-strut  framework with $T = (V, C \cup S)$ has at most $4|V|-10$ members.
By slightly modifying the counts in the above proof we can also determine the tight
upper bounds in the cases $d=1$ and $d=2$ in a similar manner.
We summarize these results in the following statement.

\begin{figure}[!h]
\begin{center}
\includegraphics[width=5cm]{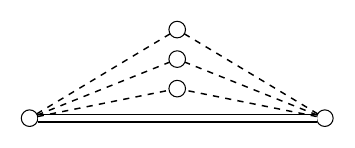}
\caption{\label{2v3} A tensegrity graph with $2|V|-3$ members that has a minimally infinitesimally rigid realization in $\R ^1$.
}
\end{center}
\end{figure}

\begin{figure}[!h]
\begin{center}
\includegraphics[width=5cm]{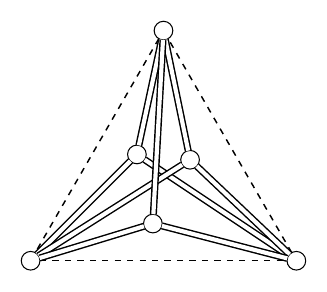}
\includegraphics[width=5cm]{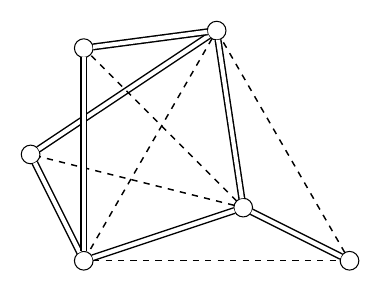}
\caption{\label{3v6} Minimally infinitesimally rigid tensegrity frameworks in the plane with $3|V|-6$ members.}
\end{center}
\end{figure}

\begin{theorem} \label{minrigbound}
	Let $(T,p)$ be a $d$-dimensional generic minimally infinitesimally rigid cable-strut framework with $T = (V, C \cup S)$, where $1\leq d\leq 3$. 
 Then 
 \begin{equation}
 \label{4v}
|C \cup S| \le (d+1) |V| - \binom{d + 2}2.
\end{equation}
\end{theorem}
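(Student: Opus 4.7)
The plan is to reduce Theorem~\ref{minrigbound} to the refined estimate of Theorem~\ref{bound} and to analogous statements for $d\in\{1,2\}$, using Lemma~\ref{minps} as the bridge between infinitesimal rigidity and proper stressedness. Concretely, since $(T,p)$ is minimally infinitesimally rigid, Lemma~\ref{minps} yields that $(T,p)$ is minimally properly stressed, and Theorem~\ref{fund} gives that $\overline{T}$ itself is infinitesimally rigid, so $k_d(T) = 0$. For $d = 3$ substituting $k_3(T) = 0$ into the conclusion of Theorem~\ref{bound} yields $|C \cup S| \leq 4|V| - 10$, which is exactly $(d+1)|V| - \binom{d+2}{2}$.

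For $d \in \{1,2\}$ I would mimic the proof of Theorem~\ref{bound}, replacing the dimension-three counts by their dimension-$d$ versions. In outline: take a properly stressed circuit decomposition $(C_1, \ldots, C_t)$ of $(T,p)$ supplied by Theorem~\ref{geoear} and prove by induction on $t$ a refined bound of the shape $|C \cup S| \leq (d+1)|V| - \binom{d+2}{2} - c_d \, k_d(T)$ for a suitable positive constant $c_d$, from which the theorem then follows by setting $k_d(T)=0$. The base case $t = 1$ is handled by (\ref{ceq3}) when $d = 2$, and by the observation that an $\mathcal{R}_1$-circuit is simply a cycle (so has $|V|$ edges, comfortably below $2|V|-3$) when $d = 1$. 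For the inductive step one combines the lobe-rank identity $|E^+| = d|V^+| + 1 + k_d(T') - k_d(T)$ of Lemma~\ref{lmm: lobe rank} with the lower bound $|E^+| \geq \lceil (d+1)(|V^+| + 1)/2 \rceil$, which follows, when $V^+ \neq \emptyset$, from the minimum degree $d+1$ and $(d+1)$-edge-connectivity of the $\mathcal{R}_d$-circuit $C_t$ (applied to the cut separating $V^+$ from $V^-$).

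The main obstacle is the degenerate case $V^+ = \emptyset$, where the lobe-rank identity collapses to $|E^+| = 1 + k_d(T') - k_d(T)$ and all the slack must come from the minimality estimate $|E^+| \geq 2$, itself a consequence of Theorem~\ref{min} applied to the subframework on $D_{t-1}$. For $d \in \{1,2,3\}$ the arithmetic is tight but goes through cleanly; this is precisely why the paper states the bound up to $d = 3$ and records the general statement only as Conjecture~\ref{conj_cs}.
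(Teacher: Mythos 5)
Your proposal is correct and follows essentially the same route as the paper: the $d=3$ case is exactly the paper's deduction (Lemma~\ref{minps} plus $k_3(T)=0$ substituted into Theorem~\ref{bound}), and for $d=1,2$ the paper likewise just says to rerun the proof of Theorem~\ref{bound} with the dimension-$d$ counts, which your sketch carries out correctly (with the lobe-rank identity, the $(d+1)$-edge-connectivity bound $|E^+|\ge\frac12(d+1)(|V^+|+1)$, and $|E^+|\ge 2$ in the $V^+=\emptyset$ case). The only nitpick is that $|E^+|\ge 2$ comes from minimality of the framework at the current level of the induction, while Theorem~\ref{min} is what licenses applying the induction hypothesis to the subframework on $D_{t-1}$; this does not affect correctness.
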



The upper bound is best possible, see Figures \ref{2v3}, \ref{3v6}, and \ref{4v10},
for $d=1$, $d=2$, and $d=3$, respectively.
We believe that the bound in (\ref{4v}) is the right answer for all $d\geq 1$.

\begin{figure}[!h]
\begin{center}
\includegraphics[width=5cm]{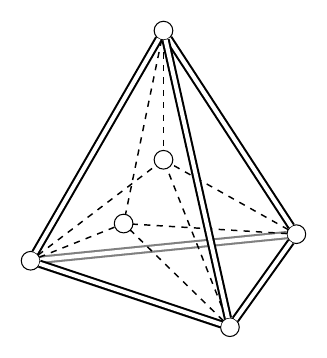}
\caption{\label{4v10} A minimally infinitesimally rigid tensegrity framework in ${\mathbb R}^3$ with
$4|V|-10$ members.}
\end{center}
\end{figure}

\begin{conjecture}
\label{conj_cs}
Let $(T,p)$ be a $d$-dimensional generic minimally infinitesimally rigid cable-strut framework with $T = (V, C \cup S)$, for some positive integer $d$. 
 Then 
 \begin{equation}
 \label{dv}
|C \cup S| \le (d+1) |V| - \binom{d + 2}2.
\end{equation}
\end{conjecture}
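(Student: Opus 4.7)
The plan is to establish the stronger inequality
\[ |C \cup S| \leq (d+1)|V| - \binom{d+2}{2} - \tfrac{d+1}{d-1}\, k_d(T), \]
valid for $d \geq 2$, from which the conjecture follows at once: the infinitesimal rigidity of $(T,p)$ forces $k_d(T) = 0$. The case $d = 1$ is already covered by Theorem~\ref{minrigbound}. I would argue by induction on the length $t$ of a properly stressed circuit decomposition $(C_1, \ldots, C_t)$ of $\overline{T}$, whose existence is guaranteed by Theorem~\ref{geoear}, mirroring the structure of Theorem~\ref{bound}. The base case $t = 1$ is the case that $\overline{T}$ is an $\mathcal{R}_d$-circuit, and the inequality is exactly (\ref{ceq3}) of Lemma~\ref{circuits}. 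Theorem~\ref{min} ensures that each intermediate subframework on $D_j$ is minimally properly stressed, so the inductive hypothesis applies cleanly to the subframework on $D_{t-1}$.

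For the inductive step, with $T'$ the subgraph on $D_{t-1}$, $V^+ = V(C_t) \setminus V(T')$, $V^- = V(C_t) \cap V(T')$, and $E^+ = C_t \setminus D_{t-1}$, Lemma~\ref{lmm: lobe rank} combined with $r_d(G) = d|V(G)| - \binom{d+1}{2} - k_d(G)$ gives
\[ |E^+| = d|V^+| + 1 + k_d(T') - k_d(T). \]
Substituting this into the inductive hypothesis and simplifying, the target inequality reduces algebraically to the purely combinatorial bound
\[ 2|E^+| \geq (d+1)(|V^+| + 1). \]
When $|V^+| \geq 1$ and $V^- \neq \emptyset$ (which can be arranged by a circuit-ordering argument via Lemma~\ref{conn}), this follows from the $(d+1)$-edge-connectivity of the $\mathcal{R}_d$-circuit $C_t$: every vertex of $V^+$ contributes all of its $C_t$-incident edges (at least $d+1$ of them) to $E^+$, and the cut between $V^+$ and $V^-$ inside $C_t$ contributes an additional $\geq d+1$ edges to $E^+$, yielding the estimate at once.

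The main obstacle is the residual case $|V^+| = 0$, where $V(C_t) \subseteq V(T')$ and the required inequality collapses to $|E^+| \geq (d+1)/2$. The minimality of the properly stressed framework (Theorem~\ref{min}) guarantees only $|E^+| \geq 2$, which is enough exactly when $d \leq 3$ (recovering Theorem~\ref{bound}), and falls short by $\tfrac{d-3}{d-1}$ for larger $d$. For $d \in \{4,5\}$, the coefficient $\tfrac{d+1}{d-1}$ is non-integer, so the integrality of $|C \cup S|$ provides fractional slack at each step; a careful parity bookkeeping, tracking $k_d(T') - k_d(T) \bmod (d-1)$ through the induction, should recover the missing fractional amount. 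For $d \geq 6$, the shortfall grows with $d$ and parity tricks no longer suffice: a genuinely new ingredient is needed, for example a strengthening of Lemma~\ref{own} that produces a minimally supported stress whose circuit has $V^+ \neq \emptyset$ (ruling out the bad case altogether), or a convex-geometric sharpening of Corollary~\ref{cor:BR} that exploits genericity more aggressively than the current Steinitz-type arguments. I expect the $d \geq 6$ case to be the hardest part of the proof, and possibly to require abandoning the circuit-decomposition framework in favour of a direct rank argument on $R^{-}(T,p)$.
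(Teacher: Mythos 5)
The statement you were asked to prove is stated in the paper as an open \emph{conjecture}: the authors prove it only for $1\leq d\leq 3$ (Theorem~\ref{minrigbound}, via Theorem~\ref{bound}), and for $d\geq 4$ they only establish the weaker bound of Theorem~\ref{thm: edge bounds}. Your outline reproduces, essentially verbatim, the paper's own inductive argument over a properly stressed circuit decomposition, and your reduction of the inductive step to $2|E^+|\geq (d+1)(|V^+|+1)$ is correct (it is exactly the computation in the proof of Theorem~\ref{bound}, with the $(d+1)$-edge-connectivity of ${\cal R}_d$-circuits supplying the case $|V^+|\geq 1$). But this means your proposal proves only what the paper already proves, and the part that would make it a proof of the conjecture is missing.

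The genuine gap is the residual case $V^+=\emptyset$, which you correctly identify but do not resolve. There the needed inequality is $|E^+|\geq \frac{d+1}{2}$, while minimality (Theorem~\ref{min}) only yields $|E^+|\geq 2$; for $d\geq 4$ nothing in the paper's toolkit closes this. Your two proposed remedies are both speculative: the ``parity bookkeeping'' for $d\in\{4,5\}$ is not carried out and it is not clear that tracking $k_d(T')-k_d(T)$ modulo $d-1$ actually recovers the shortfall of $\frac{d-3}{d-1}$ at every step of the induction (the losses could accumulate across several lobes with $V^+=\emptyset$), and for $d\geq 6$ you explicitly state that a new ingredient is needed without supplying one. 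A proof attempt that ends with ``a genuinely new ingredient is needed'' for an infinite family of cases is not a proof; as it stands, your argument establishes the conjecture only for $d\leq 3$, where it coincides with the paper's Theorem~\ref{bound}, and the conjecture remains open for $d\geq 4$.
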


\subsection{Higher dimensions}

For $d\geq 4$ 
we can improve on the general upper bound of Theorem \ref{th_parallel} and get closer to the conjectured bound in (\ref{dv}).
To obtain this improvement, we need the following lemma, which is 
an extension
of Lemma \ref{circuits}.
We say that a 
circuit decomposition $(C_1,C_2,\dots,C_t)$ of ${\cal R}_d(G)$
is {\it strict}, if every circuit adds at least one new vertex, that is,
$V(C_j)-V(D_{j-1})\not= \emptyset$ for all $1\leq j\leq t$.

It will be convenient to work with the rank $r^*_d(G)$ of the dual of the rigidity matroid of a graph $G$, given by $r_d^* (G) =|E| - r_d (G)$. 
It is useful to observe that for every circuit decomposition $(C_1,C_2,\dots,C_t)$
of ${\cal R}_d(G)$ we have $t=r_d^*(G)$.

\begin{lemma} \label{lmm: strict ear decomposition}
	Let $G = (V,E)$ be a connected graph. If ${\cal R}_d(G)$ has a strict circuit decomposition, then the following are true:
\begin{equation}
\label{L8:1}
  |E| \ge \frac12 (d + 1) (|V| + r_d^* (G) - 1)
  \end{equation}
  \begin{equation}
  \label{L8:2}
		k_d (G) \le \frac12 (d - 1) (|V| - (r_d^* (G) + d + 1))
   \end{equation}
  \begin{equation}
  \label{L8:3}
		r_d^* (G) \le |V| - \frac2{d - 1} k_d (G) - (d + 1)
	\end{equation}
\end{lemma}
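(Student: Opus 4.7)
The plan is to prove \eqref{L8:1} directly; statements \eqref{L8:2} and \eqref{L8:3} then follow algebraically via the identity $|E| = d|V|-\binom{d+1}{2}-k_d(G)+r_d^*(G)$, which combines the definitions of $k_d(G)$ and $r_d^*(G)$ and, for $d\ge 2$, makes the three inequalities equivalent. (The case $d=1$ of \eqref{L8:2} and \eqref{L8:3} is trivial, since a connected graph has $k_1(G) = 0$.)

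In order to carry out induction on $t = r_d^*(G)$ cleanly---the subgraph obtained after removing the last circuit of a strict decomposition need not be connected---I would prove the slightly stronger statement that \emph{for any graph $H$ whose rigidity matroid admits a strict circuit decomposition,}
\[
|E(H)| \;\ge\; \tfrac{d+1}{2}\bigl(|V(H)|+r_d^*(H)-c(H)\bigr),
\]
where $c(H)$ denotes the number of connected components of $H$. Specialising to $H=G$ with $c(G)=1$ yields \eqref{L8:1}. The base case $t=0$ is trivial. For the induction step, set $H' = (V(D_{t-1}), D_{t-1})$, whose truncated decomposition $(C_1,\ldots,C_{t-1})$ is again strict, and apply the induction hypothesis to $H'$. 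Writing $V^+ = V(C_t)\setminus V(H')$, $V^- = V(C_t)\cap V(H')$, and letting $k$ denote the number of connected components of $H'$ met by $V^-$ (so $k=0$ iff $V^-=\emptyset$), one checks that $c(H)-c(H')=1-k$, after which a short calculation reduces the induction step to the single key inequality
\[
|\tilde{C}_t| \;\ge\; \tfrac{d+1}{2}\bigl(|V^+|+k\bigr).
\]

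When $k=0$ this reduces to the minimum-degree bound $|C_t|\ge\tfrac{d+1}{2}|V(C_t)|$ for the $\mathcal{R}_d$-circuit $C_t$. When $k\ge 1$, I would contract each of the $k$ intersections of $V^-$ with a component of $H'$ to a single vertex, obtaining a multigraph $C^{\star}$ on $|V^+|+k$ vertices. Because edge-connectivity never decreases under contraction and every $\mathcal{R}_d$-circuit is $(d+1)$-edge-connected, $C^{\star}$ is also $(d+1)$-edge-connected, so the non-loop degree at every vertex is at least $d+1$, giving at least $\tfrac{d+1}{2}(|V^+|+k)$ non-loop edges. Each non-loop edge of $C^{\star}$ comes from an edge of $C_t$ that either is incident to $V^+$ or joins two vertices lying in distinct components of $H'$; in either case such an edge cannot lie in $D_{t-1}$, so all of them belong to $\tilde{C}_t$, completing the induction. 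The main technical obstacle I anticipate is the bookkeeping leading to $c(H)-c(H')=1-k$, together with verifying that the contracted multigraph inherits $(d+1)$-edge-connectivity and that all of its counted non-loop edges genuinely lie in $\tilde{C}_t$.
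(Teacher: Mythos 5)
Your proof is correct and follows essentially the same route as the paper: induction on $t=r_d^*(G)$, reducing everything to \eqref{L8:1}, with the inductive step resting on the bound $|\tilde C_t|\ge\tfrac12(d+1)(|V^+|+k)$ obtained from the $(d+1)$-edge-connectivity of the $\mathcal R_d$-circuit $C_t$ (the paper writes the same inequality with $k=q$, the number of components of $D_{t-1}$, and applies the induction hypothesis to each component rather than carrying your $c(H)$ correction term, but this is only a bookkeeping difference). Your explicit contraction argument in fact supplies more justification for that key inequality than the paper does.
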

\begin{proof}
	Note that (\ref{L8:2}) and (\ref{L8:3}) are equivalent. Since $G$ contains an
 ${\cal R}_d$-circuit, we must have $|V|\geq d+2$. Hence
  (\ref{L8:2}) follows from (\ref{L8:1}) and the formula 
 \begin{equation}
 \label{L8:4}
		|E| =
		r_d (G) + r_d^* (G) =
		d |V| - \binom{d + 1}2 - k_d (G) + r_d^* (G) . 
\end{equation}	
 Thus, it suffices to show (\ref{L8:1}). Let $(C_1, \ldots, C_t)$ be a strict
 circuit decomposition of ${\cal R}_d(G)$. Note that
$t = r_d^* (G)$. We prove the lemma by induction on $t$.

For $t=1$ the statement follows from
the facts that an $\mc R_d$-circuit $G$ is $(d + 1)$-edge connected and has $r_d^*(G)=1$. 
So we may assume that $t\geq 2$ holds.

	
	Let $H_1, \ldots, H_q$ be the connected components of $D_{t - 1} = C_1 \cup \cdots \cup C_{t - 1}$, for some $q\geq 1$. Then the vertex sets $V(H_i)$ are pairwise disjoint and each $V(H_i)$ intersects $V(C_t)$ by the connectivity of $G$. Let $V^+ = V(C_t) - \bigcup_{i = 1}^q V(H_i)$ and 
 $E^+ = \tilde C_t$.
 Then $|E^+| \ge \frac12 (d + 1) (|V^+| + q)$, since $C_t$ is $(d + 1)$-edge connected. 
 Observe that the existence of a strict circuit decomposition of ${\cal R}_d(G)$
implies that each ${\cal R}_d(H_i)$ has a strict circuit decomposition. 
Thus we may apply
 induction to the $H_i$'s and deduce that \[
		|E| =
		|E^+| + \sum_{i = 1}^q |H_i| \ge
		\frac12 (d + 1) (|V| + r_d^* (G) - 1) \]
	which completes the proof.
\end{proof}

The graphs obtained from the complete bipartite graph
$K_{d+1,q}$, for some $q\geq 1$, by adding a complete graph on the
color class of size $d+1$ achieve equality in (\ref{L8:1}), (\ref{L8:2}), and  (\ref{L8:3}).


We next use Lemma \ref{lmm: strict ear decomposition} to 
obtain better bounds for $d\geq 4$ as well as a 
different proof of Theorem \ref{minrigbound}.

\begin{theorem} \label{thm: edge bounds}
	Let $(T,p)$ be a $d$-dimensional generic minimally infinitesimally rigid cable-strut framework with $T=(V,C\cup S)$. Then 
 \begin{equation}
		\label{eq: general d edge bound}
		|E| \le \frac12 (3d - 1) |V| - \binom{d + 2}2 - \frac12 (d - 3) (d + 2) 
\end{equation}	
 for $d \ge 3$, and
	\begin{equation}
  \label{eq: d at most 3 edge bound}
		|E| \le (d + 1) |V| - \binom{d + 2}2 
\end{equation}	
 for $d \le 3$.
\end{theorem}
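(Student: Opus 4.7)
The bound \eqref{eq: d at most 3 edge bound} for $d \leq 3$ is exactly Theorem \ref{minrigbound}, so my plan is to focus entirely on \eqref{eq: general d edge bound} for $d\geq 3$. Following the blueprint of Theorem \ref{bound}, I will prove the stronger inductive inequality
\[
|E| \leq \tfrac{1}{2}(3d-1)|V| - \binom{d+2}{2} - \tfrac{1}{2}(d-3)(d+2) - 2\,k_d(T)
\]
for every connected generic minimally properly stressed cable-strut framework in $\R^d$ with $d \geq 3$. Since an infinitesimally rigid tensegrity framework has a connected underlying graph and satisfies $k_d(T)=0$, combining this with Lemma \ref{minps} delivers \eqref{eq: general d edge bound}. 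At $d=3$ the strengthened statement specializes exactly to Theorem \ref{bound}.

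The induction will be on the length $t$ of a properly stressed circuit decomposition $(C_1,\dots,C_t)$ of $(T,p)$, which is supplied by Theorem \ref{geoear}. For the base case $t=1$, $T$ itself is an $\mc R_d$-circuit, and the inequality will follow by direct algebra from Lemma \ref{circuits}: identity (\ref{ceq1}) combined with the upper bound (\ref{ceq2}) on $k_d$ yields exactly what is needed, with equality attained at $T=K_{d+2}$. For the inductive step I will set $T'=D_{t-1}$, $E^+=C_t\setminus D_{t-1}$, $V^+=V(C_t)\setminus V(D_{t-1})$, $V^-=V(C_t)\cap V(T')$, and let $T_1,\dots,T_q$ be the connected components of $T'$. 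By Theorem \ref{min} and the fact that proper stresses decompose across components, each $T_i$ is itself connected and minimally properly stressed, so the inductive hypothesis will apply to each.

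Summing the inductive bounds on $|E(T_i)|$, adding $|E^+|$, and substituting the rank identity $|E^+|=d|V^+|+1+k_d(T')-k_d(T)$ from Lemma \ref{lmm: lobe rank} will reduce the inductive step to the single requirement
\[
|E^+| \geq \tfrac{d+1}{2}\,|V^+| + 2q.
\]
Because $T$ is connected, every $T_i$ must meet $V^-$. So if $V^+\neq\emptyset$ or $q\geq 2$, contracting each $T_i$ to a single vertex inside $C_t$ produces a $(d+1)$-edge-connected multigraph on $|V^+|+q$ vertices, giving $|E^+|\geq\tfrac{d+1}{2}(|V^+|+q)$; this is $\geq\tfrac{d+1}{2}|V^+|+2q$ precisely because $d\geq 3$. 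In the remaining subcase $V^+=\emptyset$ and $q=1$, minimality will force $|E^+|\geq 2$: if $E^+=\{e\}$ then $(T-e,p)$ is the properly stressed subframework on $D_{t-1}$, contradicting minimality, and this matches $2q=2$.

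The hard part will be guessing the correct strengthened inductive statement in the first place: the coefficient $2$ on $k_d(T)$ and the constant $\tfrac{1}{2}(d-3)(d+2)$ are chosen so that the key identity $\binom{d+2}{2}+\tfrac{1}{2}(d-3)(d+2)-2\binom{d+1}{2}=-2$ holds, which is what produces the clean target $\tfrac{d+1}{2}|V^+|+2q$ in the $|E^+|$ estimate. The hypothesis $d\geq 3$ then enters exclusively through $\tfrac{d+1}{2}\geq 2$, which is also why this method cannot reach the stronger conjectured bound \eqref{dv}.
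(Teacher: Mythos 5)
Your proposal is correct, but it is organized differently from the paper's proof of this theorem. The paper does not run an induction at the top level: it uses Lemma \ref{conn} to reorder the properly stressed circuit decomposition so that all vertex-adding lobes come first, forming a spanning subgraph $T'$ with a \emph{strict} circuit decomposition; it bounds the remaining (vertex-free) lobes by $2k_d(T')$ using minimality together with Lemma \ref{lmm: lobe rank}; and it then invokes inequality (\ref{L8:2}) of Lemma \ref{lmm: strict ear decomposition} applied to $T'$. A single computation then yields \emph{both} bounds, by substituting $r_d^*(T')\ge 1$ for (\ref{eq: general d edge bound}) and $r_d^*(T')+d+1\le |V|$ for (\ref{eq: d at most 3 edge bound}) — so the paper's argument also gives an independent proof of Theorem \ref{minrigbound}, whereas you must import that theorem for $d\le 3$. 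Your route is instead a direct induction on the whole decomposition with the strengthened statement carrying $-2k_d(T)$; this is essentially the proof of Theorem \ref{bound} extended to all $d\ge 3$ (and specializes to it at $d=3$). Both arguments ultimately rest on the same ingredients — Lemma \ref{lmm: lobe rank}, the $(d+1)$-edge-connectivity of $\mathcal{R}_d$-circuits exploited via contraction of the components of $D_{t-1}$, and minimality forcing each vertex-free lobe to contribute at least two edges — and your arithmetic checks out: the base case follows from (\ref{ceq1}) and (\ref{ceq2}) with equality exactly at $K_{d+2}$, and the identity $\binom{d+2}{2}+\tfrac12(d-3)(d+2)-2\binom{d+1}{2}=-2$ is correct. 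The one step you should spell out is that each component $T_i$ of $D_{t-1}$ inherits both a properly stressed circuit decomposition of shorter length and minimal proper stressedness (this is routine: every $\mathcal{R}_d$-circuit is connected, so each $C_j$ lies in a single component, and proper stresses restrict to and glue across components), after which the induction hypothesis legitimately applies to each $T_i$.
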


\begin{proof}
It follows from Theorem \ref{geoear} and Lemma \ref{conn} that
 $(T,p)$ has
 a properly stressed circuit decomposition $(C_1, \ldots, C_{t_1}, C_{t_1 + 1}, \ldots, C_{t_2})$ such that the first $t_1$ lobes add vertices and the last $t_2 - t_1$ lobes add no vertices. Let $T' = C_1 \cup \cdots \cup C_{t_1}$. It follows from the minimality of
 $(T,p)$ 
 that $|C \cup S| \le |E(T')| + 2 k_d (T')$. Since $|E(T')| = d |V| - \binom{d + 1}2 - k_d (T') + r_d^* (T')$, 
inequality (\ref{L8:2}) implies \[
		|C \cup S| \le
		\frac12 (3d - 1) |V| - \binom{d + 2}2 - \frac12 (d - 3) (r_d^* (T') + d + 1) . \]
	Plugging in $r_d^* (T') \ge 1$ gives us (\ref{eq: general d edge bound}), and plugging in $r_d^* (T') + d + 1 \le |V|$ gives us (\ref{eq: d at most 3 edge bound}).
\end{proof}

The main results of this section are concerned with generic $d$-dimensional infinitesimally rigid cable-strut frameworks $(T,p)$. 
The underlying graph $\overline{T}$ of such a framework is 
{\it redundantly rigid}, that is, it remains rigid in $\R^d$ after the
removal of any member of $T$.
In these results the assumption on the non-existence of parallel members can be 
replaced by the weaker assumption, saying that 
$\overline{T}$ is 
redundantly rigid. 
This follows from the fact that in this case
$(T,p)$ contains a minimally infinitesimally rigid spanning cable-strut 
framework. We omit the details.

\section{Minimally ${\cal R}_d$-connected graphs}
\label{sec:8}

Our upper bounds on the size of minimally infinitesimally rigid tensegrity frameworks in $\R^d$
imply similar upper bounds on the size of certain graphs defined by specific rigidity 
properties. In this section we collect some corollaries of this type, including an
affirmative answer to a recent conjecture of the second author.

An ${\cal R}_d$-connected graph $G$ is called {\it minimally ${\cal R}_d$-connected},
if $G-e$ is not ${\cal R}_d$-connected for all $e\in E(G)$.
The cases $d=1,2$ of the following conjecture were settled in \cite{Jear}.

\begin{conjecture} \cite{Jear}
\label{conjecture:minimallyMconnected}
Let $G=(V,E)$ be a minimally ${\cal R}_d$-connected graph.
Then we have
\begin{equation}
|E|\leq (d+1)|V|-\binom{d+2}{2},
\end{equation}
where equality holds if and only if $G=K_{d+2}$. 
\end{conjecture}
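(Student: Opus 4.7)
The plan is to prove the conjecture by reducing it to the tight edge-count bound for generic minimally infinitesimally rigid cable–strut tensegrity frameworks, namely Theorem~\ref{minrigbound} for $d \le 3$ and Conjecture~\ref{conj_cs} for $d \ge 4$. The key step is to convert a minimally $\mathcal{R}_d$-connected graph $G = (V,E)$ into a minimally properly stressed generic cable-strut framework $(T,p)$ with $\overline{T} = G$, whereupon the bound on $|C \cup S|$ immediately gives the desired bound on $|E|$.

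For the construction, I would start with a generic bar realization $(G,p)$. Because $\mathcal{R}_d(G)$ is connected it is in particular bridgeless, so Lemma~\ref{own} provides, for each edge $e$, a stress of $(G,p)$ supported on an $\mathcal{R}_d$-circuit through $e$. A generic linear combination $\omega$ of these stresses then has full support $E$. Labeling $C = \{e : \omega(e) < 0\}$ and $S = \{e : \omega(e) > 0\}$ produces a tensegrity $T$ for which $\omega$ is, by construction, a proper stress of $(T,p)$. Since $G$ is rigid in $\mathbb{R}^d$ (as $\mathcal{R}_d$-connectedness implies rigidity), Theorem~\ref{fund} yields that $(T,p)$ is infinitesimally rigid.

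To prove the minimality of $(T,p)$, I would suppose for contradiction that $(T-e,p)$ is infinitesimally rigid for some $e$. By Theorem~\ref{fund} it has a proper stress $\omega'$, which extended by zero at $e$ is a stress of $(G,p)$ whose restriction to $E-e$ has full support and agrees in sign with $\omega$. Applying Theorem~\ref{geoear} to $(T-e,p)$ then gives a properly stressed circuit decomposition of $(T-e,p)$ and hence a circuit decomposition of the matroid $\mathcal{R}_d(G-e)$. I would then upgrade this from bridgelessness to connectedness of $\mathcal{R}_d(G-e)$ by using the sign-consistency along the ears: combining the strong circuit exchange axiom with Lemma~\ref{dep}, for any two edges $f, g \in E - e$ I would construct an $\mathcal{R}_d$-circuit of $G - e$ containing both, contradicting the minimal $\mathcal{R}_d$-connectedness of $G$.

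For the equality case, when $|E| = (d+1)|V| - \binom{d+2}{2}$ the tensegrity bound is attained; tracing the inequalities in the proof of Theorem~\ref{minrigbound} forces the properly stressed circuit decomposition of $(T,p)$ to consist of a single ear, so $G$ is itself an $\mathcal{R}_d$-circuit attaining equality in Lemma~\ref{circuits}(\ref{ceq3}), which by Lemma~\ref{mindeg} forces $G = K_{d+2}$. The \textbf{main obstacle} is the minimality step: the existence of a full-support stress on $G-e$ only yields bridgelessness of $\mathcal{R}_d(G-e)$, not connectedness, so one must carefully exploit the sign constraints coming from the proper stress together with the properly stressed circuit decomposition to promote bridgelessness to matroid connectedness. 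Moreover, for $d \ge 4$ the argument ultimately relies on Conjecture~\ref{conj_cs}, so the proof is unconditional only for $d \le 3$, matching the paper's announced result in the three-dimensional case.
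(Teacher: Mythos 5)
Your reduction to the tensegrity bounds does not go through, and the paper's own proof (Theorem~\ref{Mconn} and its corollary) takes a completely different, purely combinatorial route. Two concrete gaps. First, your opening step already fails for $d\geq 3$: ${\cal R}_d$-connectedness does \emph{not} imply rigidity in $\R^d$ (the paper notes explicitly that an ${\cal R}_d$-circuit may fail to be rigid for $d\geq 3$), so $(\overline{T},p)=(G,p)$ need not be infinitesimally rigid, Theorem~\ref{fund} does not make $(T,p)$ infinitesimally rigid, and Theorem~\ref{minrigbound} is not applicable; this is precisely why the paper's Theorems~\ref{bound} and~\ref{Mconn} carry the correction term $-2k_d(\cdot)$ and work with ``minimally properly stressed'' rather than ``minimally rigid''. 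Second, the minimality step you flag as the main obstacle is a genuine dead end, not a technicality: a proper stress of $(T-e,p)$ only shows that every edge of $G-e$ lies in some ${\cal R}_d$-circuit, i.e.\ that ${\cal R}_d(G-e)$ is bridgeless, whereas minimal ${\cal R}_d$-connectedness only guarantees that ${\cal R}_d(G-e)$ is disconnected. A disconnected bridgeless matroid (e.g.\ a disjoint union of circuits, each carrying its own correctly signed dependence) supports a proper stress, and Lemma~\ref{dep} together with circuit exchange only ever produces circuits inside one connected component, so the sign constraints cannot manufacture a circuit through two edges in different components. Hence no contradiction arises and minimal proper stressedness of $(T,p)$ is not established. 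The equality case is gapped as well: Theorem~\ref{minrigbound} has no equality characterization, and in the inductive proof of Theorem~\ref{bound} equality can propagate through steps with $t\geq 2$ (when $|E^+|=2(|V^+|+1)$ and $|V^+|+k_3(T')-k_3(T)=1$), so attaining the bound does not force a single-circuit decomposition and cannot yield $G=K_{d+2}$.

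The paper instead argues entirely inside the matroid: take an ear-decomposition of ${\cal R}_d(G)$ reordered so that the vertex-adding ears come first, let $H$ be the graph spanned by that prefix, invoke Lemma~\ref{minear} (prefixes of an ear-decomposition of a minimally connected matroid are minimally connected) to show each later lobe has at least two elements and hence raises the rank, which gives $|E|\leq |E(H)|+2(k_d(H)-k_d(G))$, and then bound $H$ via the strict circuit decomposition inequality (\ref{L8:2}); equality forces $H$ to be an ${\cal R}_d$-circuit attaining equality there, hence $K_{d+2}$. No tensegrity realization is needed, and in particular Lemma~\ref{minear} is the ingredient that converts minimal ${\cal R}_d$-connectedness into a usable quantitative statement --- the role your proposal tries, unsuccessfully, to assign to proper stresses.
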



By using our results on circuit decompositions of matroids, we
can verify the $d=3$ case of Conjecture \ref{conjecture:minimallyMconnected}
in a stronger form.
We need the following lemma.

\begin{lemma} \label{minear}
\cite{Jear}
Let $G=(V,E)$ be a minimally ${\cal R}_d$-connected graph and let
$(C_1, C_2,\ldots, C_{t})$  be an ear-decomposition.
Then $D_j$ induces a  minimally ${\cal R}_d$-connected subgraph for all $1\leq j\leq t$.
\end{lemma}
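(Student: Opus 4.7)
The plan is to prove the lemma by showing that if $D_j - e$ were $\mathcal{R}_d$-connected for some $e \in D_j$, then $G - e$ itself would be $\mathcal{R}_d$-connected, contradicting the minimality of $G$. The fact that $D_j$ alone is $\mathcal{R}_d$-connected is essentially free: since $(C_1, \ldots, C_j)$ satisfies (E1') and (E2') in the restricted matroid $\mathcal{R}_d(G)|_{D_j}$ just as it does in $\mathcal{R}_d(G)$, the restriction admits an ear-decomposition, and hence is connected by the characterization of ear-decomposability cited earlier.

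For the minimality part, I would fix $e \in D_j$ with $D_j - e$ assumed $\mathcal{R}_d$-connected, fix a reference edge $g \in D_j - e$, and show that every $f \in E - \{e\}$ shares a circuit of $\mathcal{R}_d(G - e)$ with $g$. The case $f \in D_j - e$ follows immediately from the assumed connectivity of $D_j - e$, since the witnessing circuit already avoids $e$. For $f \in E - D_j$, I would start with a circuit $C^*$ of $\mathcal{R}_d(G)$ containing $f$ and $g$ (available by the $\mathcal{R}_d$-connectivity of $G$); if $e \notin C^*$ we are done, and otherwise I would apply the strong circuit exchange axiom to $C^*$ and a circuit $C' \subseteq D_j$ with $e \in C'$ (which exists because $\mathcal{R}_d(G)|_{D_j}$ is connected), with pivot $e$ and target $f \in C^* - C'$ (using $f \notin D_j \supseteq C'$). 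This yields a circuit $C'' \subseteq (C^* \cup C') - \{e\}$ containing $f$, which is therefore a circuit of $\mathcal{R}_d(G - e)$.

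The decisive observation is that $C''$ must meet $D_j - e$: it cannot be contained in $C^* - e$, which is independent, so it necessarily contains an element of $(C' - \{e\}) \setminus (C^* - \{e\}) \subseteq D_j - e$. Through such an element $h$, the connectivity of $\mathcal{R}_d(D_j - e)$ supplies a circuit of $\mathcal{R}_d(G-e)$ linking $h$ to $g$, while $C''$ itself links $f$ to $h$ in $\mathcal{R}_d(G-e)$. Transitivity of the matroid connected-component relation then gives $f$ and $g$ in the same connected component of $\mathcal{R}_d(G - e)$, contradicting the minimal $\mathcal{R}_d$-connectivity of $G$.

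I expect the main delicate point to be precisely this last step: verifying that the circuit $C''$ produced by strong circuit exchange actually reaches $D_j - e$ rather than remaining trapped inside $C^* - e$. The independence of $C^* - e$ (as a proper subset of a circuit) is what forces $C''$ outside of it, and this is the crucial structural use of $C^*$ being a circuit. No induction on $j$ appears to be needed — the argument is uniform in $j$, relying only on the connectivity of $\mathcal{R}_d(G)|_{D_j}$ (itself an easy consequence of the restricted ear-decomposition) and on circuit exchange in $\mathcal{R}_d(G)$.
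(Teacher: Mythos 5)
The paper does not actually prove this lemma: it is imported verbatim from \cite{Jear} with only a citation, so there is no in-paper argument to compare yours against. Judged on its own, your proof is correct and complete. The connectivity of $\mathcal{R}_d(G)|_{D_j}$ does follow as you say, because circuits of a restriction are exactly the circuits of $\mathcal{R}_d(G)$ contained in $D_j$, so $(C_1,\ldots,C_j)$ remains an ear-decomposition of the restricted matroid and the cited Coullard--Hellerstein equivalence applies; and the rigidity matroid of the subgraph induced by an edge set coincides with the restriction of $\mathcal{R}_d(G)$ to that edge set, so no distinction arises there. The contradiction step is also sound: with $e\in C^*\cap C'$, $f\in C^*-C'$ and $C'\subseteq D_j$, strong circuit elimination gives a circuit $C''\subseteq (C^*\cup C')-e$ through $f$, and since $C^*-e$ is independent, $C''$ must pick up an element $h\in (C'-C^*)-\{e\}\subseteq D_j-e$; the assumed connectivity of $D_j-e$ then chains $f$ to the reference edge $g$ inside $\mathcal{R}_d(G-e)$, contradicting minimal $\mathcal{R}_d$-connectivity of $G$. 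You correctly identify the independence of $C^*-e$ as the decisive point. Your toolkit (strong circuit exchange driving a new circuit into $D_j$) is the same manoeuvre the authors use elsewhere, e.g.\ in the proof of Theorem \ref{min}, so the argument is entirely in the spirit of the paper even though the paper itself defers this particular proof to \cite{Jear}.
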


The main result of this section is as follows.

\begin{theorem}
\label{Mconn}
Let $G=(V,E)$ be a minimally ${\cal R}_d$-connected graph for some $d\geq 3$. Then we have
\begin{equation}
|E| \le \frac12 (3d - 1) |V| - \binom{d + 2}2 - \frac12 (d - 3) (d + 2)-2k_d(G)
\end{equation}
with equality if and only if $G=K_{d+2}$.
\end{theorem}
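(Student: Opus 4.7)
The plan is to adapt the argument of Theorem~\ref{thm: edge bounds} to the matroid setting, replacing properly stressed circuit decompositions by ear-decompositions of $\mc R_d(G)$. Since $G$ is $\mc R_d$-connected, an ear-decomposition exists. The first task is to produce one of the form $(C_1,\ldots,C_{t_v},C_{t_v+1},\ldots,C_t)$ whose first $t_v$ ears are each vertex-adding and whose remaining $t_n=t-t_v$ ears add no new vertices. I would build this greedily: as long as the current $V(D_{j-1})$ is a proper subset of $V$, pick a vertex $u\notin V(D_{j-1})$ and use $\mc R_d$-connectedness (which provides, for an edge $e$ at $u$ and an edge $e'\in D_{j-1}$, a circuit through both) to extend by an ear containing $u$. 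Setting $G'=(V,E(D_{t_v}))$, we have $V(G')=V$, the first $t_v$ ears form a strict ear-decomposition of $G'$, and by Lemma~\ref{minear} $G'$ is itself minimally $\mc R_d$-connected.

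The second step bounds the contribution of the non-vertex-adding ears. For each $j>t_v$, Lemma~\ref{minear} forces $|\tilde C_j|\geq 2$: if $\tilde C_j=\{e\}$ then $D_j-e=D_{j-1}$ is still $\mc R_d$-connected, contradicting the minimal $\mc R_d$-connectedness of $D_j$. Combining this with $|\tilde C_j|-1=k_d(D_{j-1})-k_d(D_j)$ (Lemma~\ref{lmm: lobe rank}, using $V^+_j=\emptyset$) gives $|\tilde C_j|\leq 2\bigl(k_d(D_{j-1})-k_d(D_j)\bigr)$. Since these ears are consecutive at the end, telescoping yields
\[
|E|\;\leq\;|E(G')|+2\bigl(k_d(G')-k_d(G)\bigr).
\]

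The third step handles $G'$ itself. Writing $|E(G')|=r_d^*(G')+d|V|-\binom{d+1}{2}-k_d(G')$ and applying Lemma~\ref{lmm: strict ear decomposition}~(\ref{L8:2}) to bound $k_d(G')\leq\tfrac{d-1}{2}\bigl(|V|-r_d^*(G')-(d+1)\bigr)$, the coefficient of $r_d^*(G')$ in the resulting expression becomes $\tfrac{3-d}{2}\leq 0$ for $d\geq 3$. Using $r_d^*(G')\geq 1$ and the elementary identity $-\binom{d+1}{2}-\tfrac{d^2-1}{2}-\tfrac{d-3}{2}=-(d-1)(d+2)=-\binom{d+2}{2}-\tfrac{(d-3)(d+2)}{2}$, one obtains the claimed upper bound.

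For the equality case, I would trace the inequalities back. For $d>3$, equality forces $r_d^*(G')=1$, so $G'$ is itself an $\mc R_d$-circuit, and equality in Lemma~\ref{circuits}~(\ref{ceq2}) pins it as $K_{d+2}$; completeness then rules out non-vertex-adding ears, giving $G=K_{d+2}$. For $d=3$ the coefficient of $r_d^*(G')$ is zero, and one must argue separately that among the extremal graphs of Lemma~\ref{lmm: strict ear decomposition} (namely the graphs $K_{4,q}+K_4$) only $q=1$ is compatible with minimal $\mc R_3$-connectedness. I expect the main technical difficulty to lie in (a) justifying the existence of the vertex-ordered ear-decomposition at the outset, which requires a careful use of $\mc R_d$-connectedness to ensure a vertex-adding ear is always available while $V(D_{j-1})\neq V$, and (b) the sharper equality analysis in the $d=3$ case.
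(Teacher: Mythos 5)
Your proposal follows the paper's proof essentially step for step: reorder an ear-decomposition of ${\cal R}_d(G)$ so that the vertex-adding ears come first, use Lemma~\ref{minear} to force $|\tilde C_i|\geq 2$ for the non-vertex-adding ears (hence $|E|\leq |E(G')|+2(k_d(G')-k_d(G))$ via Lemma~\ref{lmm: lobe rank}), and then combine the rank formula with Lemma~\ref{lmm: strict ear decomposition}(\ref{L8:2}) and $r_d^*(G')\geq 1$; your algebra checks out and reproduces the paper's displayed chain of inequalities. One small caveat on the construction: picking ``a circuit through an edge at $u$ and an edge of $D_{j-1}$'' does not by itself produce a legal ear, because the lobe-minimality condition (E2') may force you to pass to a circuit whose lobe avoids $u$ entirely. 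What you actually need (and what the paper isolates in a footnote, proved via the strong circuit exchange axiom) is that for any prescribed element $e\in E-D_j$ there is a \emph{valid} next ear $C_{j+1}$ with $e\in C_{j+1}$; applied to an edge incident to a new vertex this yields the desired ordering. This is a fixable but real omission in your first step.

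The genuine gap is the equality analysis for $d=3$, which you flag but do not carry out. For $d>3$ your argument is complete: equality forces $r_d^*(G')=1$, so $G'$ is an ${\cal R}_d$-circuit meeting (\ref{ceq2}) with equality, hence $K_{d+2}$. For $d=3$ the coefficient $\tfrac12(d-3)$ of $r_d^*(G')$ vanishes, so $G'$ need not be a circuit a priori, and your proposed fix rests on the assertion that the graphs $K_{4,q}+K_4$ are \emph{all} the extremal graphs of (\ref{L8:2}) --- a classification the paper never proves (it only exhibits these graphs as achieving equality). What is actually needed, and what you would have to supply, is an argument that a \emph{minimally ${\cal R}_3$-connected} graph attaining equality in (\ref{L8:2}) must be $K_5$; for instance $K_{4,2}+K_4=K_6-e$ attains equality in (\ref{L8:2}) but is excluded because $K_6$ minus two disjoint edges is an ${\cal R}_3$-circuit, so $K_6-e$ is not minimally ${\cal R}_3$-connected and cannot occur as $D_{t_v}$ by Lemma~\ref{minear}. (The paper's own proof is also terse at exactly this point, asserting that $H$ must be a circuit; but since you are being graded on your argument, the $d=3$ equality case stands as the missing piece.)
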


\begin{proof}
Let $(C_1,C_2,\dots,C_{t_1},C_{t_1+1},\dots, C_t)$ be an ear-decomposition of ${\cal R}_d(G)$.
We may assume\footnote{Here we use the following statement, which can be proved by using the strong circuit axiom: given a partial ear-decomposition $(C_1,C_2,\dots,C_{j})$ and an element $e\in E-D_j$
of a connected matroid, there exists a
partial ear-decomposition $(C_1,C_2,\dots,C_{j},C_{j+1})$ with $e\in C_{j+1}$.}
that the lobes of the first $t_1$ ears add new vertices and the
remaining $t-t_1$ lobes add no vertices. Let $H$ be the spanning subgraph of $G$ induced
by $D_{t_1}$.
It follows from Lemma \ref{minear} that $|\tilde C_i|\geq 2$,
and hence the addition of $C_i$ increases the rank by at least one,
for all $t_1+1\leq i\leq t$. 
We can use this fact, together with (\ref{L8:4}), (\ref{L8:2}), and the inequality $r_d^*(H)\geq 1$
to deduce that
\begin{align*}
|E| &\leq |E(H)| + 2(k_d(H)-k_d(G)) = d|V| - \binom{d + 1}2 +k_d(H)+r^*_d(H)-2k_d(G)\\
&\leq \frac12 (3d - 1) |V| - \binom{d + 1}2 - \frac12 (d - 1) (r_d^*(H) + d+1)+ r_d^*(H) -2k_d(G)\\
&=\frac12 (3d - 1) |V| - \binom{d + 2}2 - \frac12 (d - 3) (r_d^*(H) + d + 1)-2k_d(G)\\
&\leq \frac12 (3d - 1) |V| - \binom{d + 2}2 - \frac12 (d - 3) (d + 2)-2k_d(G).
\end{align*}

Equality holds only if $H$ is an ${\cal R}_d$-circuit that achieves equality in (\ref{L8:2}). 
In that case $H$, and hence also $G$, must be a $K_{d+2}$.
\end{proof}

Theorem \ref{Mconn}
improves on the previously known upper bound of \cite{Jear} and
implies an affirmative answer to the $d=3$ case
of
Conjecture \ref{conjecture:minimallyMconnected}.

\begin{corollary}
Let $G=(V,E)$ be a minimally ${\cal R}_3$-connected graph. Then we have
\begin{equation}
|E| \le 4|V| - 10,
\end{equation}
with equality if and only if $G=K_{5}$.
\end{corollary}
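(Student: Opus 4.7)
The plan is to derive this as a direct specialization of Theorem~\ref{Mconn} to the case $d=3$. First I would substitute $d=3$ into the inequality of Theorem~\ref{Mconn}. This gives
\[
|E| \le \tfrac{1}{2}(3\cdot 3 - 1)|V| - \binom{5}{2} - \tfrac{1}{2}(3-3)(3+2) - 2k_3(G) = 4|V| - 10 - 2k_3(G),
\]
where the middle correction term vanishes precisely because the factor $(d-3)$ is zero at $d=3$, which is exactly the lucky alignment that makes the theorem sharp in this dimension.

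Next I would drop the nonnegative term $2k_3(G)$. By definition $k_3(G) = 3|V| - \binom{4}{2} - r_3(G) \ge 0$, since the rank of the $3$-dimensional rigidity matroid is always bounded above by $3|V| - 6$. This immediately yields the desired bound $|E| \le 4|V| - 10$.

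For the equality characterization, if $|E| = 4|V|-10$ then both $2k_3(G) = 0$ and equality must hold in Theorem~\ref{Mconn}. The equality clause of Theorem~\ref{Mconn} forces $G = K_{d+2} = K_5$. Conversely, $K_5$ is well known to be ${\cal R}_3$-connected (in fact an ${\cal R}_3$-circuit) and minimally so, has $k_3(K_5) = 0$, and has exactly $\binom{5}{2}=10 = 4\cdot 5 - 10$ edges, verifying equality.

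No real obstacle is expected here, since the work has already been done in Theorem~\ref{Mconn}; the only thing to verify is that the correction term vanishes at $d=3$ and that the equality case transfers cleanly. The content of the corollary is essentially that $d=3$ is the ``critical'' dimension where the general upper bound of Theorem~\ref{Mconn} collapses to the conjectured form $(d+1)|V| - \binom{d+2}{2}$, settling Conjecture~\ref{conjecture:minimallyMconnected} in this case.
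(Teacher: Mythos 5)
Your proposal is correct and is exactly the derivation the paper intends: the corollary is stated as an immediate specialization of Theorem~\ref{Mconn} to $d=3$, where the $(d-3)$ factor kills the correction term, the nonnegative $2k_3(G)$ is dropped, and the equality clause of the theorem transfers directly to give $G=K_5$.
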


\subsection{Further corollaries}

A tensegrity graph $T=(V,B\cup C\cup S)$ is called {\it weakly rigid} in $\R^d$ if there exists
an infinitesimally rigid generic realization $(T,p)$ of $T$ in $\R^d$.
It is said to be {\it minimally weakly rigid} in $\R^d$ if $T-e$ is not weakly rigid in $\R^d$ for all
$e\in B\cup C\cup S$.
Since every infinitesimally rigid generic realization of a minimally weakly rigid tensegrity graph
is minimally infinitesimally rigid, 
our upper bounds on the size of minimally infinitesimally rigid frameworks imply similar upper bounds on the size of minimally weakly rigid tensegrity graphs.

In particular, we can use Theorem \ref{minrigbound} to deduce the tight upper bounds on the
size of minimally weakly rigid cable-strut graphs in $\R^d$ for $1\leq d\leq 3$.
In the special case $d=1$ this bound was already obtained in \cite{CJP}.

The underlying tensegrity graphs of the frameworks in Figure \ref{2v3}, \ref{3v6},
and \ref{4v10}
show that these upper bounds are tight.
We remark that there exist generic minimally infinitesimally rigid frameworks in $\R^d$ whose underlying tensegrity graphs are not minimally weakly rigid in $\R^d$.


\medskip

A redundantly rigid graph $G$ in $\R^d$ is called {\it minimally redundantly rigid}
if $G-e$ is not redundantly rigid in $\R^d$ for all $e\in E(G)$.
Let $G=(V,E)$ be a redundantly rigid graph in $\R^d$.
It is not hard to see that a $d$-dimensional
generic realization $(G,p)$ of $G$ 
has a nowhere zero dependence $\omega$, see e.g. \cite{JRS}.
Let $T=(V,C\cup S)$ be a tensegrity graph with $C\cup S=E$ in which $e$
is a cable if and only if $\omega(e)<0$.
Then $(T,p)$ is infinitesimally rigid by Theorem \ref{fund}.
Moreover, if $G$ is minimally redundantly rigid, then $(T,p)$ is
minimally infinitesimally rigid in $\R^d$.
Thus we can use 
Theorem \ref{thm: edge bounds} 
to deduce upper bounds on the size of minimally redundantly rigid graphs  
in $\R^d$.
See \cite{Jear,K} for related results.


\section{Concluding remarks}

In this paper we have determined, among others, the best possible upper bound on the number of members of a $d$-dimensional minimally infinitesimally rigid
tensegrity framework (resp. generic cable-strut framework) on $n$ vertices, in terms of $d$ and $n$, for all $d\geq 1$ (resp. $1\leq d \leq 3$).
The following related questions are left open.  


\medskip
\noindent
(i) What is the best possible upper bound in the case of (non-generic)
cable-strut frameworks? 
For example, 
consider a tensegrity framework $(T,p)$ in $\R^2$
obtained from a unit square of cables
by adding some vertices, which are mapped to the center of the square, 
and are connected to the vertices of the square by struts. See Figure \ref{nongen} (left).
These frameworks are minimally infinitesimally rigid, and their
tensegrity graphs $T=(V,C\cup S)$ satisfy 
$|C\cup S|=4|V|-12$.
This count is quite close to the bound of Theorem~\ref{th_parallel}, but there is
a gap. W. Whiteley \cite{Whiteley} conjectures that for $d=2$ the tight
upper bound is $4|V|-12$.
\medskip

\begin{figure}[!h]
\begin{center}
\includegraphics[width=5cm]{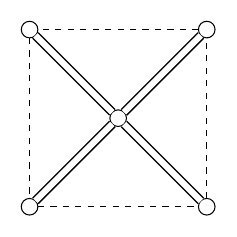}
\includegraphics[width=5cm]{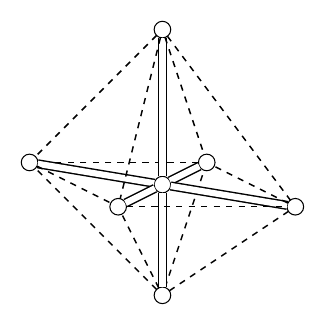}
\caption{\label{nongen} Nongeneric minimally infinitesimally rigid cable-strut frameworks in $\R^2$ and $\R^3$.}
\end{center}
\end{figure}


\noindent (ii) 
In a minimally infinitesimally rigid bar-and-joint framework the
sparsity count extends to all subsets of the vertex set.
Is there a similar phenomenon in the case of 
minimally infinitesimally rigid tensegrity frameworks?


\section{Acknowledgements}


The second author was supported by the MTA-ELTE Momentum Matroid Optimization Research Group and the National Research, Development and Innovation Fund of Hungary, financed under the ELTE TKP 2021‐NKTA‐62 funding scheme.


\begin{thebibliography}{99}

	


\bibitem{BonniceReay} {\scshape W. E. Bonnice and J. R. Reay}, Interior points of convex hulls,
{\it Israel Journal of Mathematics}, 1966, 4: 243-248.



\bibitem{CJP} {\scshape A. Clay, T. Jord\'an, and J. Palmer},
Minimally weakly globally rigid graphs on the line, manuscript.

\bibitem{CH} {\scshape C.R. Coullard and L. Hellerstein},
Independence and port oracles for matroids, with an application to computational learning theory, {\it Combinatorica}  Vol. 16, pp. 189–208, (1996)


\bibitem{con} {\scshape R. Connelly},
The rigidity of certain cabled frameworks and the second-order rigidity of arbitrarily triangulated convex surfaces, {\it Advances in Mathematics} 37, 272-299, 1980.

\bibitem{CG} {\scshape R. Connelly and S.D. Guest}, Frameworks, tensegrities, and
symmetry, Cambridge University Press, 2022.
\newblock


\bibitem{CN} {\scshape R. Connelly and A. Nixon},
\newblock
Tensegrity, Chapter 10 in: Handbook of Geometric Constraint Systems Principles,
eds: M. Sitharam, A. St. John, and J. Sidman, CRC Press, 2019.

\bibitem{gluck}
{\scshape H.~Gluck},
\newblock Almost all simply connected closed surfaces are rigid.
\newblock In {\em Geometric Topology}, volume 438 of {\em Lecture Notes in
  Mathematics}, pages 225--239. Springer-Verlag, 1975.
  
 
 
 \bibitem{JJsparse} {\scshape B. Jackson and T. Jord\'an,}
	\newblock The $d$-dimensional rigidity matroid of sparse graphs,
	{\it J. Combin. Theory Ser. B}, Vol. 95, 118-133, 2005.
 
 \bibitem{JJdress} {\scshape B. Jackson and T. Jord\'an,} The Dress conjectures on
rank in the $3$-dimensional rigidity matroid, 
{\it Advances in Applied Mathematics}, Vol. 35, 
355-367, 2005.

\bibitem{JJ} {\sc B. Jackson and T. Jord\'an},
	On the rank function of the 3-dimensional rigidity matroid
	{\it International Journal on Computational Geometry and Applications}, Vol. 16, Nos. 5-6 (2006) 415-429.

\bibitem{JJK} {\scshape B. Jackson, T. Jord\'an, and Cs. Kir\'aly},
 Strongly rigid tensegrity graphs on the line,
{\it Discrete Applied Mathematics} 161, 1147-1149, 2013.

\bibitem{Jor} {\scshape T. Jordán}, Combinatorial rigidity: graphs and matroids in the theory of rigid frameworks. Discrete Geometric Analysis, {\it MSJ Memoirs}, vol. 34, pp. 33-112, 2016.

\bibitem{Jear} {\sc T. Jord\'an,} Ear-decompositions, minimally connected matroids, and rigid graphs, 
{\it J. Graph Theory}, Vol. 105, Issue 3, March 2024, pp. 451-467.

 
 
 
 \bibitem{JRS} {\sc T. Jord\'an, A. Recski, and Z. Szabadka,}
Rigid tensegrity labelings of graphs, {\it European J. Combinatorics} 30 (2009) 
1887-1895.

\bibitem{K}  {\scshape Cs. Kir\'aly}, On the size of highly redundantly rigid graphs,
Proc. 12th Japanese-Hungarian Symposium on Discrete Mathematics and Its Applications, Budapest, March 2023,
pp. 263-272.

\bibitem{Oxley} {\scshape J. Oxley}, Matroid Theory, Oxford University Press, 1992.

\bibitem{RW}
	{\sc B. Roth and W. Whiteley}, Tensegrity frameworks, {\it Trans. Amer. Math. Soc.}, 265 (1981), 419-446.

\bibitem{SW} {\sc B. Schulze and W. Whiteley},
Rigidity and scene analysis, {\it in} 
J.E. Goodman, J. O'Rourke, C.D. T\'oth (eds.),
 {\it Handbook of Discrete and Computational
Geometry,} 3rd ed., CRC Press, Boca Raton, 
2018.

\bibitem{Soltan} {\scshape V. Soltan}, Lectures on convex sets. World Scientific, 2019.

\bibitem{Steinitz} {\scshape E. Steinitz}, Bedingt konvergente Reihen und konvexe Systeme. i-ii-iii. {\it J. Reine Angew. Math.}, 143 (1913), 128–175, 144 (1914), 1–40, 146 (1916), 1-52.

\bibitem{Whiteley} {\scshape W. Whiteley}, Tensegrity, draft chapter, 1987.


\end{thebibliography}
\end{document}